\tikzset{
  shadowed/.style={preaction={
      transform canvas={shift={(2pt,-1pt)}},draw opacity=.2,#1,preaction={
        transform canvas={shift={(3pt,-1.5pt)}},draw
        opacity=.1,#1,preaction={
          transform canvas={shift={(4pt,-2pt)}},draw
          opacity=.05,#1,
  }}}},
}
\newif\if@restonecol
\newtheorem{theorem}{Theorem}	
\newtheorem{definition}{Definition}
\newtheorem{lemma}{Lemma}
\newtheorem{remark}{Remark}
\newtheorem{corollary}{Corollary}
\newtheorem{proposition}{Proposition}  
\def\R{\mathbb{R}}
\def\F{\mathcal{F}}
\newcommand{\argmin}{\operatornamewithlimits{argmin}}
\newcommand{\G}{\mathcal{G}}
\tikzset{
  ashadow/.style={opacity=.25, shadow xshift=0.07, shadow yshift=-0.07},
}
\newenvironment{proof}{\paragraph{Proof:}}{\hfill$\square$}
\newcommand{\remove}[1]{}
\def\edi#1{{\color{black}#1}}
\def\fskip#1{}
\def\R{\mathbb{R}}
\def\E{\mathbb E}
\def\A{\mathcal A}
\def\B{\mathcal B}
\def\S{\mathcal S}
\def\variance{\text{Var}}
\def\F{\mathcal F}
\def\G{\mathcal G}
\def\Pr{\text{Pr}}
\def\x{\mathbf x}
\def\y{\mathbf y}
\def\X{\mathbf X}
\def\XX{\mathbb X}
\def\I{\mathcal I}
\def\Q{\mathbf Q}
\def\infect{\uparrow}
\def\recover{\downarrow}
\def\existence{\uparrow}
\def\nonexistence{\downarrow}
\def\on{_{\text{on} }}
\def\abcomp{_{(\overline{a,b})}}
\def\abagn{_{?(a,b)}}
\DeclarePairedDelimiterX{\infdivx}[2]{(}{)}{%
  #1\;\delimsize\|\;#2%
}
\def\Pr{\text{Pr}}
\def\N{\mathbb{N}}
\begin{document}

\title{Usefulness of the Age-Structured SIR Dynamics in Modelling COVID-19}

 \author{Rohit Parasnis}
 \author{Ryosuke Kato}
 \author{Amol Sakhale}
 \author{Massimo Franceschetti}
 \author{Behrouz Touri\thanks{Email: rparasni,rkato,asakhale,mfranceschetti,btouri@ucsd.edu}} 
 \affil{Department of Electrical and Computer Engineering, University of California San Diego}
\date{}

\maketitle

\begin{abstract}
We examine the age-structured SIR model, a variant of the classical Susceptible-Infected-Recovered (SIR) model of epidemic propagation, in the context of COVID-19. In doing so, we provide a theoretical basis for the model, perform an empirical validation, and discover the limitations of the model in approximating arbitrary epidemics. We first establish the differential equations defining the age-structured SIR model as the mean-field limits of a continuous-time Markov process that models epidemic spreading on a social network involving random, asynchronous interactions. We then show that, as the population size grows, the infection rate for any pair of age groups converges to its mean-field limit if and only if the edge update rate of the network approaches infinity, and we show how the rate of mean-field convergence depends on the edge update rate. We then propose a system identification method for parameter estimation of the bilinear ODEs of our model, and we test the model performance on a Japanese COVID-19 dataset by generating the trajectories of the age-wise numbers of infected individuals in the prefecture of Tokyo for a period of over 365 days. In the process, we also develop an algorithm to identify the different \textit{phases} of the pandemic, each phase being associated with a unique set of contact rates. Our results show a good agreement between the generated trajectories and the observed ones.
\end{abstract}

\section{INTRODUCTION}
The global COVID-19 death toll has crossed  6 million \cite{worldometer2020covid}, and it is no surprise that researchers all over the world have been forecasting the evolution of this pandemic to propose control policies aimed at minimizing its medical and economic impacts~\cite{ferguson2020report,alvarez2020simple,acemoglu2020optimal,acemoglu2020multi,chatterjee2020healthcare,pare2021multi}. Their efforts have typically relied on classical epidemiological models or their variants (for an overview see~\cite{bullo2020current} and the references therein). 
One such classical epidemic model is the   Susceptible-Infected-Recovered (SIR) model. 
Proposed in~\cite{kermack1927contribution}, the SIR model is a compartmental model in which every individual belongs to one of three possible states at any given time instant: the \textit{susceptible} state, 
the \textit{infected} state,  
and the \textit{recovered} state. 
The continuous-time SIR dynamics models the time-evolution of the fraction of individuals in any of these states using a set of ordinary differential equations (ODEs) parameterized by two quantities: the {infection rate} (the rate at which a given infected individual infects a given susceptible individual) and the recovery rate of infected individuals.

Even though the continuous-time SIR model is a deterministic model, it models an inherently random phenomenon in a large (but discrete) population. To bridge between the deterministic continuous-time SIR model and the underlying random processes over a finite population, researchers have shown that the associated (continuous-time) ODEs are the mean-field limits of continuous-time Markovian epidemic processes over a finite population~\cite{kurtz1970solutions,benaim2008class}. Similar results have been obtained for variants of the original model, such as for the SIR dynamics on a configuration model network~\cite{volz2008sir,decreusefond2012large}. These results theoretically justify the SIR model ODEs. 

Classical SIR models, however, (continuous and discrete-time) are homogeneous -- the same infection and recovery rates apply to the whole social network despite differences in the individuals' age, gender, race, immunity level, and pre-existing medical conditions. For COVID-19, this assumption is  inconsistent with studies showing that the contact rates between individuals and the recovery rates of infected individuals depend on factors such as age and location~\cite{mossong2008social,prem2017projecting,klepac2020contacts,voinsky2020effects}. In addition,~\cite{ellison2020implications} argues that homogeneous models can introduce significant biases in forecasting the epidemic, including overestimation of the number of infections required to achieve herd immunity, overestimation of the strictness of optimal control policies, overestimation of the impact of policy relaxations, and incorrect estimation of the time of onset of the pandemic.

We therefore need to shift our focus to variants of the classical SIR model with heterogeneous contact rates. Examples include the multi-risk SIR model~\cite{acemoglu2020multi} and the age-stratified SIR models considered in~\cite{acemoglu2021optimal,baqaee2020reopening,rampini2020sequential}, in which the population is partitioned into multiple groups and the rates of infection and recovery vary across groups. See~\cite{ellison2020implications} for a survey of these papers.

 However, the models considered in the above works have two main shortcomings. On the one hand, barring exceptions such as~\cite{favero2020restarting}, 
they are typically not validated using real data. On the other hand, they do not have a strong theoretical foundation because the dynamical processes studied in these works have not been established as the mean field limits of stochastic epidemic processes evolving on time-varying random graphs. We emphasize that even the convergence results obtained for homogeneous SIR models~\cite{kurtz1970solutions,benaim2008class,volz2008sir,decreusefond2012large} make the unrealistic assumption that the network of physical contacts (in-person interactions) existing in the  population is time-invariant. As such, we cannot justify the use of these models in designing optimal control policies aimed at minimizing the impact of any epidemic. We therefore address the aforementioned shortcomings using the \textit{age-structured} SIR model, a multi-group SIR model that partitions the population of a given region into different age groups and assigns different infection rates and recovery rates to the age groups. We note that, although we adopt the term \textit{age-structured} in our paper, our analysis also applies to populations  partitioned on the basis of differences in geographical location, sex, immunity level, etc.  Moreover, among existing heterogeneous models~\cite{ellison2020implications}, the age-structured SIR model is the simplest and hence more mathematically and computationally tractable than other models.



 The contributions of this paper are as follows:
 \begin{enumerate}
 \item \textit{\textbf{Modeling}:}  We extend our previously proposed stochastic epidemic model \cite{parasnis2021case} to a more general model that incorporates (a) a random and time-varying network of physical contacts (in-person interactions between pairs of individuals) that are updated asynchronously and at random times, (b) random transmissions of disease-causing pathogens from infected individuals to their susceptible neighbors, and (c) recoveries of infected individuals that occur at random times. 
     We analyze the resulting dynamics and show that under certain independence assumptions, the expected trajectories of the fractions of susceptible/infected/recovered individuals in any age group converge in mean-square to the solutions of the age-structured SIR ODEs as the population size  goes to $\infty$.
 \item \textit{\textbf{Convergence Rate Analysis}:}  We derive a lower bound on the effective infection rate for a given pair of age groups  in the stochastic model. This bound, as we show, is approximately linear in the reciprocal of the network update rate, which leads to the infection rate converging to its limit (specified by the ODEs) as fast as the reciprocal of the network update rate vanishes.
\item \textit{\textbf{Validation}:} We validate our age-structured model empirically by estimating the parameters of our model using a Japanese COVID-19 dataset and, subsequently, by generating the age-wise numbers of infected individuals as functions of time. In this process, we leverage the crucial fact that
the ODEs defining our model are linear in the model parameters (transmission and recovery rates), which enables us to use a least-squares method for the system identification.
\item \textit{\textbf{A Method to Detect Changes in Social Behavior}:} We design a simple algorithm that can be used to detect changes in social behavior throughout the duration of the pandemic. Given the age-wise daily infection counts, the algorithm estimates the dates around which the inter-age-group contact rates change significantly.
\item \textit{\textbf{Insights into Epidemic Spreading}:} We interpret the results of our phase detection algorithm to identify the least and the most infectious age groups and the least and the most vulnerable age groups. 
Additionally, we analyze the data for the entire period from March 2020 to April 2021 to explain how certain social events influenced the propagation of COVID-19 in the prefecture of Tokyo.
 \end{enumerate}

The structure of our paper is as follows: We introduce the age-structured SIR model and our stochastic epidemic model in Section~\ref{sec:formulation}. We establish the age-structured SIR ODEs as the mean-field limits of our stochastic model in Section~\ref{sec:main_result}. We also discuss the limitations of (converse result for) our model in Section~\ref{sec:main_result}. Next, we describe the empirical validation of our model (in the context of the COVID-19 outbreak in Tokyo) in Section~\ref{sec:empirical_validation}. We conclude with a brief summary and future directions in Section~\ref{sec:conclusion}.

\textit{Related Works:}~\cite{tkachenko2021time} proposes a heterogeneous epidemic model with time-varying parameters to show that heterogeneous susceptibility to infection results in a temporary weakening of the COVID-19 pandemic but not in herd immunity. The model is validated using the death tolls (and not the case numbers) reported for New York and Chicago for a period of about 80 days.~\cite{baqaee2020reopening} uses the age-structured SEIQRD model to predict the number of deaths with a reasonable accuracy, but unlike our work, it does not use the proposed model to generate the number of new cases as a function of time.~\cite{dolbeault2020heterogeneous} uses heterogeneous variants of the SEIR model to study the impact of the lockdown policy implemented in France, but it does not validate these models empirically.~\cite{klepac2020contacts} reports contact rate matrices for the population of the UK based on the self-reported data of 36,000 volunteers. However, the study ignores the time-varying nature of these contact rates, which we capture in our phase detection algorithm (Section~\ref{sec:empirical_validation}). Another study that uses time-invariant model parameters is~\cite{contreras2020multi}, which proposes the age-structured SEIRA model and uses it to simulate the number of new infections in different social groups of Chile.

~\cite{viguerie2021simulating} uses a heterogeneous SEIRD model to predict the effects of various relaxation policies on infection counts in certain regions of Italy. The model therein is empirically validated only using the data obtained during the first 60 days of the pandemic. In~\cite{rampini2020sequential}, the authors propose an age-structured SIRD model and calibrate it with the data obtained from~\cite{ferguson2020report}. Unlike our paper, however, \cite{rampini2020sequential} divides the population into only two age groups, and does not compare the model-generated values of the number of infections with the official case counts. Two other studies that use two-age-group SIR models are~\cite{harris2020data} and~\cite{giagheddu2020macroeconomics}. While~\cite{harris2020data} argues that in Florida, old and socially inert adults have been possibly infected by the young,~\cite{giagheddu2020macroeconomics} argues that age-group-targeted policies are more effective than uniform policies in reducing the economic impact of COVID-19.~\cite{janiak2021covid} proposes a heterogeneous SIR model with feedback and forecasts the economic and medical impacts of various policies aimed at controlling the pandemic in Chile. Unlike our study, however,~\cite{janiak2021covid} ignores the time-varying nature of contact rates.~\cite{favero2020restarting} proposes the SEIR-HC-SEC-AGE model, a heterogeneous SEIR model that sub-divides each age-group further into risk sectors with different vulnerabilities to the SARS-CoV-2 virus. The model therein, which is calibrated to predict the effects of different lockdown policies in certain regions of Italy, simulates the time-evolution of the observed death toll with a high accuracy. By contrast, we pick a much simpler heterogeneous model and examine whether it fits the observed case numbers well.~\cite{acemoglu2021optimal} and \cite{acemoglu2020multi} use an age-structured SIR model to show that control policies that target different age groups differently perform better than uniform policies. However, these results assume that inter-age-group contact rates are the same for all pairs of age groups, an assumption that is inconsistent with our empirical results (Section~\ref{sec:empirical_validation}). Hence, deriving optimal policies in the framework of the age-structured SIR model under more general assumptions is an important open problem. 

\textit{Notation: } We let $\N$ denote the set of natural numbers and  $\N_0:=\N\cup\{0\}$. We let $[l]:=\{1,2,\ldots, \ell\}$ for $\ell\in\N$. We  denote the set of real and positive real numbers by $\R$ and $\R_+$, respectively. For $x\in\R$, we let $x_+:=\max\{x,0\}$ denote the positive part of $x$.

The symbols $t$ and $k$ are used as a  continuous-time and discrete-time indices, respectively. We use the notation $z(t)$ for functions $z:\R_+\cup\{0\}\to \R$ and $z[k]$ for functions $z:\N\to \R$. We occasionally omit the time index $(t)$ when the value of $t$ is clear from the context.

We use the Bachmann-Landau asymptotic notation $O(f(n))$ for a given function $f:\N\rightarrow\R$ in the context of $n\rightarrow \infty$. We use $o(\Delta t)$ in the context of $\Delta t\rightarrow 0$. \edi{In addition, for a given function $g:[0,\infty)\rightarrow \R$, we use the notation $g'=g'(t)$ to denote $\frac{dg}{dt}$, the first derivative of $g$ with respect to time. }

For a set $\mathcal S$, we let $|\mathcal S|$ denote the cardinality of $\mathcal S$. In this paper, all random events and random variables are with respect to a  probability space $(\Omega,\F,\text{Pr})$, where $\Omega$ is the sample space, $\F$ is the set of events, and $\text{Pr}(\cdot)$ is the probability measure on this space. We denote random variables and random events using capital letters, and for a random event $C$, we define $1_C$  to be the indicator random variable associated with $C$, i.e, $1_C:\Omega\to\R$ is the random variable with $1_C(\omega)=1$ if $\omega\in C$ and $1_C(\omega)=0$, otherwise. For an event $C\in \F$, $\bar C$ represents the complement of $C$. For a random variable $X$, 
$\E[X]$ denotes the expected value of $X$ and $\E[X\mid  C]$ denotes the conditional expectation of $X$ given the event $C$. For random variables $X$ and $Y$ and a random event $C$, we define
$$
   \E[X\mid Y,C] = \frac{\E[X1_C\mid Y]}{\E[1_C\mid Y]}. 
$$
Therefore, for an event $F\in \F$
$$
    \Pr(F\mid Y, C) = \E[1_F\mid Y, C] = \frac{ \E[1_{F\cap C} \mid Y] }{\E[ 1_C\mid Y ] }.
$$

We denote tuples of length $r>1$ using bold-face letters and random tuples using bold-face capital letters. For a tuple $\x$ of length $r\in\N$ and an index $\ell\in[r]$, we let $x_\ell = (\x)_\ell$ denote the $\ell$-th entry of $\x$.

For $n\in \N$ and $E\subset [n]\times [n]$, we use $G=([n], E)$ to denote the directed graph (digraph) with vertex set $[n]$ and edge set $E$. Finally, for a graph $G=([n], E)$, given two distinct nodes $a,b\in [n]$, we let $\langle a, b\rangle := (a-1)(n-1) + b - \chi_{b-a}$, where $\chi_{\alpha}=1$ if $\alpha>0$ and $\chi_\alpha=0$, otherwise. {Note that $\langle \cdot, \cdot\rangle$ maps the edges between (distinct) nodes of the graph to the numbers $1,\ldots, n^2-n$ in lexicographic order.}
\section{PROBLEM FORMULATION}~\label{sec:formulation}
We now introduce two epidemic models, of which the first describes a deterministic dynamical system and the second describes a stochastic process on a finite population. One of the main objectives of this work is to relate these  models, which is achieved in Section~\ref{sec:main_result}. 

\subsection{The Age-Structured SIR Model}

Consider a population of individuals spanning $m$ age groups\footnote{As mentioned before, throughout this paper, we could generalize the discussions involving \textit{age groups} to \text{subpopulations} distinguished by  geographical locations, pre-existing health conditions, sex, etc.}. Suppose a part of this population contracts a communicable disease at time $t=0$. Let $s_i(t), \beta_i(t)$, and $r_i(t)$ denote, respectively, the fractions of susceptible, infected, and recovered individuals in the $i$-th age group at (a continuous) time $t\geq 0$, so that $s_i(t) + \beta_i(t) + r_i(t)$ equals the fraction of individuals in the $i$-th age group for all $t\geq 0$. As the disease spreads across the population, susceptible individuals get infected, and infected individuals recover in accordance with the system of ODEs given by
\begin{align} \label{eq:age_struct}
\dot{s}_{i}(t)&=-s_{i}(t) \sum_{j=1}^{m} A_{i j} \beta_{j}(t), \cr
\dot{\beta}_{i}(t)&=s_{i}(t) \sum_{j=1}^{m} A_{i j} \beta_{j}(t)-\gamma_{i} \beta_{i}(t), \\
\dot{r}_{i}(t)&=\gamma_{i} \beta(t), \nonumber
\end{align}
where for each $i,j\in [m]$, the constant $A_{ij}$ represents the rate of infection transmission  from an individual in age group $j$ to an individual in age group $i$, and $\gamma_i$ denotes the recovery rate of an infected individual in age group $i$. Hereafter, we refer to $A_{ij}$ as the \textit{contact rate of age group $j$ with age group $i$}. Note that the third equation in~\eqref{eq:age_struct} can be obtained from the first two equations simply by using the fact that $\dot s_i(t) + \dot\beta_i(t) + \dot r_i(t)=0$ for all $t\geq 0$. Also, if $m=1$, the above model reduces to the classical  (homogeneous and continuous-time) SIR model.

\subsection{A Stochastic Epidemic Model}
Let us now define a continuous-time Markov chain that describes an age-structured process of epidemic spreading occurring over a finite (atomic) population composed of individuals that are connected through a random, time-varying network $G(t)$.
 \subsubsection{Age Groups} Let $n\in \N$ denote the total population size, and let $[n]$ be the vertex set of the time-varying graph $G(t)$, so that the vertex set indexes all the individuals/nodes in the network. We assume that $[n]$ is partitioned into $m$ age groups $\{\A_i\}_{i=1}^m$ and that $|\A_i|$ (the number of individuals in the $i$-th age group) scales linearly with $n$ for all $i\in [m]$. In the following, $i,j\in [m]$ are generic age group indices.
 \subsubsection{State Space} The state space of our random process is the space $\mathbb S= \{-1,0,1\}^n\times \{0,1\}^{2n(n-1)}$. The network state is a tuple $\mathbf x=(x_1,x_2,\ldots, x_{2n^2-n}) \in \mathbb S$, where 
 \begin{enumerate}[(i)]
     \item  $\{x_\ell\}_{\ell\in [n]}$ denotes the disease states of the nodes in the network, i.e., for $\ell\in [n]$, we set $x_\ell=0$, $1$, or $-1$ accordingly as node $\ell$ is susceptible, infected, or recovered, respectively. 
     \item For $\ell\in \{n+1, n+2,\ldots, n^2\}$, we let $x_\ell$ denote the \textit{edge state} of the $\ell$-th pair in the following {lexicographic} order of pairs of distinct nodes: $(1,2), \ldots, (1, n), (2, 1), \ldots, (2, n), \ldots, (n, 1), \ldots, (n, n-1)$. In other words, for any node pair $(a,b)\in [n]\times [n]$ such that $a\neq b$, we set $x_{ \langle a,b\rangle }=1$ if there is a directed edge from $b$ to $a$ in the network $G$, and $x_{\langle a, b\rangle  }=0$, otherwise.  
     For notational convenience, we let $1_{(a,b)}(\x):= x_{ \langle a,b\rangle  }$.
     \item For $\ell\in\{n^2+1, \ldots, 2n^2 - n\}$, we let $x_\ell$ be a binary variable whose value flips (becomes $1-x_\ell$) whenever the $(\ell - n^2)$-th edge state gets updated (re-initialized). However, the direction of this flip (whether $x_{\ell}$ changes from 0 to 1 or from 1 to 0) carries no significance.
 \end{enumerate}
 \subsubsection{State Attributes} For all $\x\in\mathbb S$, we let $\S_i(\x):=\{a\in \A_i: x_a=0\}$, $\I_i(\x):=\{a\in \A_i: x_a=1\}$, \edi{and $\mathcal R_i(\x):=\{a\in\A_i: x_a=-1\}$} denote, respectively, the set of susceptible individuals, the set of infected individuals\edi{, and the set of recovered individuals }in $\A_i$ given that the network state is $\x$. 
 We let $\S(\x):=\cup_{i=1}^m\S_i(\x)$ and $\I(\x):=\cup_{i=1}^m\I_i(\x)$.
 Additionally, for every node $a\in [n]$, we let $E_{j}^{(a)}(\x):=\sum_{c\in\I_j(\x)}1_{(a,c)}(\x)$ be the number of arcs from $\I_j(\x)$ to $a$. 
 \subsubsection{The Markov Process}  Let $\X(t)\in\mathbb S$ denote the state of the network at any time $t\geq 0$. Then {we assume} that $\{\X(t):t\geq 0\}$ is a right-continuous time-homogeneous Markov process in which every transition from a state $\x \in \mathbb S$ to a state $\y\in\mathbb S\setminus\{\x\}$  belongs to one of the following categories:
 \begin{enumerate}
     \item \textit{Infection transition:}  This occurs when a node $a\in\S_i(\x)$ gets infected by a node in $\cup_{k=1}^m \I_k(\x)$, while the disease states of all other nodes and the edge states of all the node pairs remain the same. In other words, $x_a=0$, $y_a=1$, and $x_\ell=y_\ell$ for all $\ell\neq a$. Denoting the state-independent rate of pathogen transmission from a node in $\I_k(\x)$ to an adjacent node in $\S_i(\x)$ by $B_{ik}$, we note that the rate of infection transmission from any node $c\in\I_k(\x)$ to $a$ is $B_{ik}1_{(a,c)}(\x)$. Hence, the total rate at which $a$ receives pathogens from $\I_k$ is $\sum_{c\in\I_k(\x)} B_{ik}1_{(a,c)}(\x)=B_{ik}E_k^{(a)}(\x)$, assuming that different edges transmit the infection independently of each other during vanishingly small time intervals. As a result, the effective rate at which $a$ gets infected is $\sum_{k=1}^m B_{ik}E_k^{(a)}(\x)$. We denote the successor state $\y$ of $\x$, where the node $a$ turns from susceptible to infected, by $\x_{\infect a}$.
     \item \textit{Recovery transition:} This occurs when a node $a\in \I_i(\x)$ recovers, i.e., $x_a=1$, $y_a=-1$, and $x_\ell=y_\ell$ for all $\ell\neq a$. We let $\gamma_i$ denote the rate at which an infected node in $\A_i$ (such as $a$) recovers. For such a transition, we denote $\y=\x_{\recover a}$.
     \item \textit{Edge update transition:} This occurs when $x_{\langle a,b\rangle}$, the edge state of \edi{a node pair $(a,b)\in\A_i\times \A_j$}, is updated or re-initialized, i.e., $y_{n^2 + \langle a, b\rangle } = 1-x_{n^2 + \langle a, b\rangle }$, and $y_\ell=x_\ell$ for all $\ell\notin\{ \langle a,b\rangle, n^2 + \langle a, b\rangle \}$. We let $\lambda$ denote the \textit{edge update rate} or the rate at which an edge state is updated. In addition, given that the edge state of $(a,b)$ is updated at time $t\geq 0$, the probability that $1_{(a,b)}(t)=1$ (i.e., the edge $(a,b)$ exists after the re-initialization) equals $\frac{\rho_{ij}}{n}$, where $\rho_{ij}>0$ is constant in time. 
     Therefore, if $y_{\langle a,b\rangle}=1$ (meaning that $(a,b)$ exists as an arc in $G$ in the network state $\y$), then the rate of transition from $\x$ to $\y$ equals $\lambda\frac{\rho_{ij} }{n}$, whereas  if $y_{\langle a,b\rangle}=0$, then the rate of transition from $\x$ to $\y$ equals $\lambda\left(1-\frac{\rho_{ij} }{n} \right)$. In the former case, we write $\y = \x_{\existence (a,b)}$, while in the latter case, we write $\y = \x_{\nonexistence(a,b)}$. Note that the rate of transition from $\x$ to $\x_{\nonexistence(a,b)}$ or $\x_{\existence(a,b)}$ does not depend on $\x$.
     
     The edge update transition of $(a,b)$ can be described informally as follows. Throughout the evolution of the pandemic, $a$ and $b$ decide whether or not to meet each other at a constant rate $\lambda>0$, i.e., their decision times $\{T_\ell^{(a,b)}\}_{\ell=1}^\infty$ form a Poisson  process with rate $\lambda$.
     Each time they make such a decision, they decide to interact with probability $\frac{\rho_{ij}}{n}$, and they decide not to interact with probability $1-\frac{\rho_{ij}}{n}$, independently of their past decisions. The probability of interaction is assumed to scale inversely with $n$ so that the mean degree of every node is constant with respect to $n$. 
 \end{enumerate}
 To summarize, the rate of transition from any state $\x\in\mathbb S$ to any state $\y\in\mathbb S\setminus\{\x\}$ is given by $\Q$, the infinitesimal generator of the Markov chain $\{\X(t):t\geq 0\}$, where \edi{for $\x\not=\y$}
 \begin{align*}
     \Q(\x,\y) :=
     \begin{cases}
      \sum_{k=1}^m B_{ik }E_k^{(a)} (\x) \, &\text{if }\y = \x_{\infect a}\text{ for some } a\in\S_i(\x),i\in [m] \\
      \gamma_i \quad &\text{if }\y = \x_{\recover a}\text{ for some }a\in\I_i(\x),i\in[m] \\
      \lambda \frac{\rho_{ij}}{n} \quad &\text{if } \y = \x_{\existence (a,b)} \text{ for some } (a,b)\in\A_i\times \A_j, i,j\in[m]\\
      \lambda\left(1- \frac{\rho_{ij}}{n}\right) \quad &\text{if }\y = \x_{\nonexistence (a,b), }\text{ for some }(a,b)\in\A_i\times \A_j,i,j\in [m] \\
      0 \quad &\text{otherwise}
     \end{cases},
 \end{align*}  
and $\Q(\x,\x) := -\sum_{\y\in\mathbb S\setminus\{\x\}} \Q(\x,\y)$. In addition, we say that $\y$ \textit{succeeds} $\x$ \textit{potentially} iff $\Q(\x,\y)>0$.
\section{MAIN RESULT}\label{sec:main_result}

 To provide a rigorous mean-field derivation of the dynamics~\eqref{eq:age_struct}, we now consider a \textit{sequence} of social networks with increasing population sizes such that each network obeys the theoretical framework described in Section~\ref{sec:formulation}.
 Given a network from this sequence with  population size $n\in\N$, \edi{we let $\S^{(n)}_j(t):=\S_j(\X(t))$, $\I^{(n)}_j(t):=\I_j(\X(t))$, and $\mathcal R^{(n)}_j(t):=\mathcal R_j(\X(t))$ denote the (random) sets of infected, susceptible, and infected individuals in the $j$-th age group, respectively}, and we let $s_j^{(n)}(t):=\frac{1}{n}|\S_j^{(n)}(t)|$, $\beta_j^{(n)}(t):=\frac{1}{n}|\I_j^{(n)}(t)|$ and $r_j^{(n)}(t):=\frac{1}{n}|\mathcal R_j^{(n)}(t)|$ denote the fractions of susceptible, infected, and recovered individuals in the $j$-th age group, respectively. As for the absolute numbers, we let $S_j^{(n)}(t):=|\S_j^{(n)}(t)|$, $I_j^{(n)}(t):=|\I_j^{(n)}(t)|$, and $R_j^{(n)}(t):=|\mathcal R_j^{(n)}(t)|$. Additionally, we let $E^{(n)}(t)$ denote the edge set of the network at time $t$, and we drop the superscript $^{(n)}$ when the context makes our reference to the $n$-th network clear.
 
 
 Another quantity that varies with $n$ is $\lambda^{(n)}$, the edge update rate. To obtain the desired mean-field limit in Theorem~\ref{thm:main}, we assume that $\lambda^{(n)} \rightarrow \infty$ as $n\rightarrow\infty$. To interpret this assumption, consider any pair of individuals $(a,b)\in \A_i\times \A_j$ that are in contact with each other at time $t\geq0$ during the epidemic. Since the edge state of $(a,b)$ is updated to $0$ (the state of non-existence) at a time-invariant rate of $\lambda^{(n)}\left(1 - \frac{\rho_{ij} }{n} \right)$, the assumption implies that the mean interaction time of $b$ with $a$, which is $\frac{1}{\lambda^{(n)}} + O\left(\frac{1}{n\lambda^{(n)} }\right)$, vanishes as the population size increases. This is a possible real-world scenario, because as $n$ increases, the population density of the given geographical region increases, which could result in overcrowding and rapidly changing interaction patterns in the network. This may be especially true in the case of public places such as supermarkets and subway stations at a time when the society is already aware of an evolving epidemic. Another implication  of $\lim_{n\rightarrow\infty}\lambda^{(n)}=\infty$ is that the rate at which a given infected node contacts and transmits pathogens to a given susceptible node vanishes as the population size goes to $\infty$ (see Remark~\ref{rem:only} for an explanation). This implication is weaker than the often-assumed condition that the rate of pathogen transmission is proportional to the reciprocal of the population size~\cite{armbruster2017elementary,simon2013exact}.
 
 We are now ready to state our main result. Its proof is based on the theory of continuous-time Markov chains and an analysis of how the disease propagation process is affected by random updates occurring in the network structure at random times  (which results in Propositions~\ref{prop:basic} and~\ref{prop:bounds}) in addition to the proof techniques used in~\cite{armbruster2017elementary}. 
 The proofs of all these results are available in the appendix. 

\begin{theorem}\label{thm:main}

Suppose that $\lim_{n\rightarrow\infty}\lambda^{(n)} = \infty$ and that for every $i\in [m]$, there exist $s_{i,0}, \beta_{i,0}\in [0,1]$ such that $\lim_{n\to\infty} s_i^{(n)}(0)= s_{i,0}$ and $\lim_{n\rightarrow\infty}\beta_i^{(n)}(0)=\beta_{i,0}$. Then for each $i\in [m]$,
$$
    \edi{\lim_{n\to\infty}} \mathbb{E}\left[\left\|\left(s_{i}^{(n)}(t), \beta_{i}^{(n)}(t)\right)-\left(y_i(t), w_i(t)\right)\right\|_2^{2}\right] = 0.
$$
on any finite time interval $[0, T_0]$, where $(y_i(t),w_i(t))$ is the solution to \edi{the ODE system given by the first two equations in~\eqref{eq:age_struct}, i.e., $(y_i(t),w_i(t))$ satisfies}
\begin{enumerate}[(I).]
    \item  \label{eq:main_1} $\quad \dot y_{i}=- y_i\sum_{j=1}^m  A_{ij} w_j , \quad y_{i}(0)=s_{i,0}$,
    \item \label{eq:main_2} $\quad \dot w_{i}= y_i\sum_{j=1}^m  A_{ij} w_j -\gamma_i w_i, \quad w_i(0)=\beta_{i,0}$,
\end{enumerate}
and $A\in \R^{m\times m}$ is defined by $A_{ij}:=\rho_{ij}B_{ij}$.
\end{theorem}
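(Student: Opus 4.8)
I would prove this as a mean‑field (law‑of‑large‑numbers) limit for the density‑dependent Markov chain $\{\X(t)\}$, in the spirit of Kurtz and of the elementary argument of~\cite{armbruster2017elementary}, with the extra ingredient that the \emph{fast} edge dynamics ($\lambda^{(n)}\to\infty$) must be averaged out to produce the effective rates $A_{ij}=\rho_{ij}B_{ij}$. Write $Z^{(n)}(t):=\big(s_i^{(n)}(t),\beta_i^{(n)}(t)\big)_{i\in[m]}\in[0,1]^{2m}$, let $z(t):=\big(y_i(t),w_i(t)\big)_{i\in[m]}$ be the solution of (I)--(II), and let $F:\R^{2m}\to\R^{2m}$ be the corresponding (polynomial) vector field. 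A preliminary step is to observe that $z$ is well defined on all of $[0,T_0]$ and stays in the compact set $K:=\{(y_i,w_i)_i : y_i,w_i\ge 0,\ y_i+w_i\le 1\}$ (from $\frac{d}{dt}(y_i+w_i)=-\gamma_i w_i\le 0$ and the sign structure of the right‑hand sides), so that $F$ may be taken globally Lipschitz with some constant $L$.

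\textbf{Martingale decomposition.} Applying Dynkin's formula to the coordinate functions $\x\mapsto|\I_i(\x)|/n$ and $\x\mapsto|\S_i(\x)|/n$, and using that only infection and recovery transitions change these (edge updates do not), one obtains
\[
Z^{(n)}(t) = Z^{(n)}(0) + \int_0^t \widehat F\big(\X(u)\big)\,du + M^{(n)}(t),
\]
where $M^{(n)}$ is a mean‑zero martingale and the drift $\widehat F(\x)$ has $\beta_i$‑component $\frac1n\sum_{a\in\S_i(\x)}\sum_{k} B_{ik}E_k^{(a)}(\x)-\gamma_i\frac1n|\I_i(\x)|$ and $s_i$‑component the negative of that first double sum. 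I would then split $\widehat F(\X(u)) = F(Z^{(n)}(u)) + \mathrm{err}^{(n)}(u)$ and control the two remaining terms separately. For the martingale: each jump of $Z^{(n)}$ has size $1/n$, and the total rate of infection/recovery transitions out of any reachable state is $O(n)$ (recoveries $\le\gamma_{\max}n$; infections $\le B_{\max}|E^{(n)}(u)|$, and $\E[|E^{(n)}(u)|]=O(n)$ uniformly on $[0,T_0]$ by Proposition~\ref{prop:basic}, since each potential arc is present with probability $O(1/n)$), so the predictable quadratic variation obeys $\E[\langle M^{(n)}\rangle_t]=O(t/n)$ and hence $\E\big[\sup_{u\le T_0}\|M^{(n)}(u)\|_2^2\big]=O(T_0/n)\to0$ by Doob's inequality.

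\textbf{Averaging the edge process.} The crux is the drift discrepancy $\mathrm{err}^{(n)}$. Its $\beta_i$‑component is $\frac1n\sum_{a\in\S_i}\sum_k B_{ik}E_k^{(a)}(\X(u)) - s_i^{(n)}(u)\sum_k A_{ik}\beta_k^{(n)}(u)$. If the edge state of each pair in $\A_i\times\A_k$ were exactly in its re‑initialization law $\mathrm{Bernoulli}(\rho_{ik}/n)$, then $\E[E_k^{(a)}\mid\text{disease states}]=\rho_{ik}\beta_k^{(n)}$ and, because $A_{ik}=\rho_{ik}B_{ik}$, this term would vanish in conditional mean. In reality the edge states only \emph{relax} toward that law, at the large rate $\lambda^{(n)}$; Propositions~\ref{prop:basic} and~\ref{prop:bounds}, which track how the number of infected‑to‑susceptible arcs approaches its quasi‑stationary value, are exactly what is needed to conclude $\sup_{u\le T_0}\E\big[\|\mathrm{err}^{(n)}(u)\|_2\big]=O(1/\lambda^{(n)})+O(1/n)\to0$. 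Subtracting the integral equation for $z$, taking $L^2(\Omega)$‑norms, and using the Lipschitz bound together with Minkowski's inequality on the time integrals, $\phi^{(n)}(t):=\big(\E\|Z^{(n)}(t)-z(t)\|_2^2\big)^{1/2}$ satisfies $\phi^{(n)}(t)\le a_n + L\int_0^t\phi^{(n)}(u)\,du$, where $a_n$ collects $\big(\E\|Z^{(n)}(0)-z(0)\|_2^2\big)^{1/2}\to0$ (by hypothesis on the initial fractions), the martingale bound, and $T_0\sup_{u\le T_0}\big(\E\|\mathrm{err}^{(n)}(u)\|_2^2\big)^{1/2}$. Grönwall's inequality then yields $\phi^{(n)}(t)\le a_n e^{LT_0}\to0$ uniformly on $[0,T_0]$, which is the assertion.

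\textbf{Expected main obstacle.} The martingale estimate and the Grönwall loop are routine; the substance is in Propositions~\ref{prop:basic} and~\ref{prop:bounds}, i.e.\ in showing that averaging the sparse, fast‑switching edge process against the disease process reproduces precisely the effective contact rate $A_{ik}=\rho_{ik}B_{ik}$ with an error controlled by $1/\lambda^{(n)}$. A secondary nuisance is that $E_k^{(a)}$ is not bounded (it is a sum of up to $n$ indicators), so upgrading the $L^1$ control of $\mathrm{err}^{(n)}$ that the relaxation estimate produces to the $L^2$ control used above requires a uniform second‑moment bound on $|E^{(n)}(u)|/n$, which again relies on the $\rho_{ij}/n$ scaling of the edge probabilities.
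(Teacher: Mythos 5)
Your outer structure (Dynkin decomposition, Doob's inequality for the martingale term, Gr\"onwall in $L^2$) is a legitimate and more standard route than the one the paper takes, and the martingale estimate is fine. The genuine gap is in the step you dismiss as a ``secondary nuisance'': the control of the instantaneous drift error $\mathrm{err}^{(n)}(u)=\widehat F(\X(u))-F(Z^{(n)}(u))$. Propositions~\ref{prop:basic} and~\ref{prop:bounds} control only the conditional \emph{first} moment of the edge indicators given the disease states: they yield $\E\big[\tfrac1n\sum_{a\in\S_i(u)}E_k^{(a)}(\X(u))\mid \S(u),\I(u)\big]=n\chi_{ik}\,s_i^{(n)}(u)\beta_k^{(n)}(u)$ with $n\chi_{ik}\in[\rho_{ik}(1-B_{ik}/\lambda^{(n)}),\rho_{ik}]$, i.e.\ they show that the \emph{conditional mean} of $\mathrm{err}^{(n)}(u)$ is $O(1/\lambda^{(n)})$. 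Your Gr\"onwall loop needs $\big(\E\|\mathrm{err}^{(n)}(u)\|_2^2\big)^{1/2}\to0$ (and even an $L^1$ bound on $\mathrm{err}^{(n)}$ itself, as opposed to its conditional mean, does not follow from the above), which additionally requires that $\tfrac1n\sum_{a\in\S_i}E_k^{(a)}$ \emph{concentrates} around that conditional mean. That is a statement about the conditional covariances $\E[1_{(a,c)}1_{(a',c')}\mid\S(u),\I(u)]$ of pairs of distinct edges given the disease states --- conditioning on who is still susceptible genuinely correlates edges --- and no such second-moment analogue of Proposition~\ref{prop:bounds} exists in the paper; proving one would be a substantial extension of an already long argument. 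Your proposed fix (a uniform second-moment bound on $|E^{(n)}(u)|/n$) is not the right object: it neither implies nor is implied by the required pairwise decorrelation conditional on $(\S,\I)$.

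The paper's proof is engineered precisely to sidestep this. It never works pathwise: Proposition~\ref{prop:basic} gives \emph{exact} evolution equations for $\E[s_i]$, $\E[\beta_i]$, $\E[s_i^2]$, $\E[\beta_i^2]$ in which the edge process enters only through the conditional first moments $\E[n\chi_{ij}s_i\beta_j]$, because the conditional expectation given $(\S,\I)$ is taken \emph{before} any square or absolute value of the drift is formed. These moment equations are then closed using the covariance lemmas borrowed from~\cite{armbruster2017elementary} (Lemmas~\ref{lem:sum_var}--\ref{lem:split}), so that $\variance[s_i]$ and $\variance[\beta_i]$ become unknowns of a perturbed ODE system rather than quantities to be estimated from the microscopic dynamics, and the perturbation Lemma~\ref{lem:final} delivers $\E[s_i^{(n)}]\to y_i$, $\E[\beta_i^{(n)}]\to w_i$, and $\variance[s_i^{(n)}],\variance[\beta_i^{(n)}]\to0$ simultaneously, which together give the stated $L^2$ convergence. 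To salvage your semimartingale framework you would either have to prove the conditional pairwise edge decorrelation (or a temporal-averaging/mixing estimate for the fast edge process), or restructure the argument so that the only microscopic input is the conditional mean --- which essentially forces you back onto the paper's moment-ODE route.
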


  Theorem~\ref{thm:main} relies on the following   proposition. 
 
\begin{proposition}\label{prop:basic}
For each $i,j\in [m]$, let
$$
    \chi_{ij}=\chi_{ij}(t, \S, \I):=\E[1_{(a,b)}(t)\mid \S(t), \I(t)]=\Pr((a,b)\in E(t)\mid \S(t), \I(t))
$$ 
be the \edi{random variable} that denotes
the conditional probability that a pair of nodes $(a,b)\in \S_i(t)\times \I_j(t)$ are in physical contact at time $t$ given the state of the network at time $t$. Then the following equations hold for all $t\geq 0$:
\begin{enumerate}[(i).]
    \item  \label{item:init_1} $\E[s_i]'=-\sum_{j=1}^m B_{ij}\E[n\chi_{ij} s_i \beta_j]$,
    \item  \label{item:init_2}
    $\E[\beta_i]' = \sum_{j=1}^m  B_{ij}\E[n\chi_{ij} s_i\beta_j] - \gamma_i\E[\beta_i]$,
    \item  \label{item:init_3}
    $\E[s_i^2]'=-\sum_{j=1}^m \big( 2 B_{ij}\E[n\chi_{ij} s_i^2 \beta_j ]-  B_{ij} \E[n\chi_{ij} s_i \beta_j]/n \big) $,
    \item  \label{item:init_4} $\E[\beta_i^2]' = \sum_{j=1}^m  B_{ij} (  2\E[n\chi_{ij} s_i\beta_j\beta_i]+\E[n\chi_{ij} s_i\beta_j] / n ) -\gamma_i\left( 2\E[\beta_i^2] - \E[\beta_i]/n \right) $.
\end{enumerate}
\end{proposition}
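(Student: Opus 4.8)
For each fixed $n$ the state space $\mathbb{S}$ is finite, so $\{\X(t):t\ge 0\}$ is a finite-state continuous-time Markov chain with bounded generator $\Q$, and for any $f:\mathbb{S}\to\R$ one has the identity
$$\frac{d}{dt}\,\E[f(\X(t))] = \E\big[(\Q f)(\X(t))\big],\qquad (\Q f)(\x)=\sum_{\y\neq\x}\Q(\x,\y)\,\big(f(\y)-f(\x)\big)$$
(Dynkin's formula, i.e.\ the Kolmogorov forward equation applied to $f$). The plan is to apply this to the four bounded functions $f_1(\x)=\tfrac1n|\S_i(\x)|$, $f_2(\x)=\tfrac1n|\I_i(\x)|$, $f_3(\x)=\tfrac1{n^2}|\S_i(\x)|^2$, $f_4(\x)=\tfrac1{n^2}|\I_i(\x)|^2$, which are exactly $s_i,\beta_i,s_i^2,\beta_i^2$, and to simplify $\E[(\Q f_\ell)(\X(t))]$ into the claimed right-hand side in each case.

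First I would compute $(\Q f_\ell)(\x)$ directly from the definition of $\Q$. Among the three transition types, only infection transitions $\x\mapsto\x_{\infect a}$ with $a\in\S_i(\x)$ and recovery transitions $\x\mapsto\x_{\recover a}$ with $a\in\I_i(\x)$ change $f_1,\dots,f_4$; edge-update transitions leave them invariant. An infection at $a\in\S_i(\x)$, which occurs at rate $\sum_{k=1}^m B_{ik}E_k^{(a)}(\x)$, decreases $|\S_i|$ by one and increases $|\I_i|$ by one, so it contributes increments $-\tfrac1n$ to $f_1$, $+\tfrac1n$ to $f_2$, $\tfrac{1-2|\S_i(\x)|}{n^2}$ to $f_3$, and $\tfrac{1+2|\I_i(\x)|}{n^2}$ to $f_4$; a recovery at $a\in\I_i(\x)$, at rate $\gamma_i$, contributes $-\tfrac1n$ to $f_2$ and $\tfrac{1-2|\I_i(\x)|}{n^2}$ to $f_4$. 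Collecting terms gives, for example,
$$(\Q f_1)(\x) = -\frac1n\sum_{k=1}^m B_{ik}\sum_{a\in\S_i(\x)}E_k^{(a)}(\x),$$
together with the analogous (slightly longer) expressions for $f_2,f_3,f_4$ in which the prefactors $|\S_i(\x)|$, $|\I_i(\x)|$ also appear.

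The second step is to take expectations and condition on $(\S(t),\I(t))$ via the tower property. Every prefactor above ($|\S_i|$, $|\I_i|$, $1-2|\S_i|$, etc.) is $\sigma(\S(t),\I(t))$-measurable and pulls out, so the only object to evaluate is $\E\big[\sum_{a\in\S_i(t)}E_k^{(a)}(t)\mid\S(t),\I(t)\big]=\sum_{a\in\S_i(t)}\sum_{c\in\I_k(t)}\E[1_{(a,c)}(t)\mid\S(t),\I(t)]$. Here I invoke the \emph{exchangeability} of nodes within an age group --- the dynamics and initial data are invariant under age-group-preserving node permutations that fix the disease and edge states --- so that $\E[1_{(a,c)}(t)\mid\S(t),\I(t)]$ equals the common value $\chi_{ik}$ for every pair $(a,c)\in\S_i(t)\times\I_k(t)$, and the double sum collapses to $|\S_i(t)|\,|\I_k(t)|\,\chi_{ik}=n^2 s_i\beta_k\chi_{ik}$. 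Substituting $|\S_i(t)|=ns_i$, $|\I_i(t)|=n\beta_i$ throughout and grouping the resulting $O(1/n)$ correction terms then turns the four expectations into precisely (i)--(iv); e.g.\ $\E[(\Q f_1)(\X(t))]=-\tfrac1n\sum_k B_{ik}\,\E[n^2\chi_{ik}s_i\beta_k]=-\sum_k B_{ik}\,\E[n\chi_{ik}s_i\beta_k]$, and likewise the binomial increments $(|\S_i|-1)^2-|\S_i|^2=1-2|\S_i|$ and $(|\I_i|\pm1)^2-|\I_i|^2$ produce the $B_{ij}\E[n\chi_{ij}s_i\beta_j]/n$ and $\gamma_i\E[\beta_i]/n$ terms in (iii) and (iv).

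The main obstacle is the exchangeability step: the very notation $\chi_{ij}=\E[1_{(a,b)}(t)\mid\S(t),\I(t)]$ presupposes that this conditional probability is independent of which representative pair $(a,b)\in\S_i(t)\times\I_j(t)$ is chosen, and this permutation-symmetry of the model (or, failing that, the interpretation of $\chi_{ij}$ as the pairwise average $\tfrac1{|\S_i(t)||\I_j(t)|}\sum_{a\in\S_i(t)}\sum_{b\in\I_j(t)}\E[1_{(a,b)}(t)\mid\S(t),\I(t)]$, which makes the collapse $\sum\sum\mapsto|\S_i||\I_j|\chi_{ij}$ hold by definition) must be pinned down before the reduction in the third step is legitimate. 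Everything else --- enumerating the transitions, writing down the increments, and tracking the $1/n$-order terms --- is routine bookkeeping.
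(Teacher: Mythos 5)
Your proposal is correct and is essentially the paper's proof: both identify the infection and recovery transitions as the only ones that change $S_i$ and $I_i$, multiply the transition rates by the corresponding increments (including the quadratic increments $1\mp 2S_i$, $1\pm 2I_i$ that produce the $1/n$ correction terms), condition on $(\S(t),\I(t))$, and use within-age-group symmetry to replace $\E[1_{(a,b)}(t)\mid \S(t),\I(t)]$ by the common value $\chi_{ij}$ so that the double sum collapses to $n^2\chi_{ij}s_i\beta_j$. The only difference is packaging --- you invoke the generator identity $\frac{d}{dt}\E[f(\X(t))]=\E[(\Q f)(\X(t))]$ directly, whereas the paper derives it by hand through $o(\Delta t)$ increment estimates supported by an auxiliary lemma bounding the probability of two or more jumps in $[t,t+\Delta t)$; your explicit flagging of the exchangeability point needed for $\chi_{ij}$ to be well defined (or its reinterpretation as a pairwise average) is, if anything, more careful than the paper's treatment of that step.
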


We point out that if $\chi_{ij}= \frac{\rho_{ij}}{n}$ then Equations~\eqref{item:init_1} and~\eqref{item:init_2} have the same coefficients as~\eqref{eq:main_1} and~\eqref{eq:main_2}. It is then natural to ask: how does  the conditional edge probability $\chi_{ij}$ compare to the unconditional edge probability $\frac{\rho_{ij} }{n}$? The following proposition provides an answer. As we show in Remark~\ref{rem:only}, our answer helps characterize the rate at which the infection transmission rates converge to their respective limits, an analysis missing from other works such as~\cite{armbruster2017elementary} and~\cite{simon2013exact}.
\begin{proposition}\label{prop:bounds}
For all $t\geq 0$, $n\in\N$ and $i,j\in [m]$, 
$$
    \frac{\rho_{ij} }{n} \left(1-\frac{B_{ij} }{\lambda^{(n)} }\left(1-e^{-\lambda^{(n)} t}\right) \right)  \leq \chi_{ij} \leq \frac{ \rho_{ij} }{n}.
$$
\end{proposition}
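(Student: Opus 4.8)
The plan is to fix a realization of $(\S(t),\I(t))$ — so that $\chi_{ij}$ becomes an ordinary number — with $a\in\S_i(t)$ and $b\in\I_j(t)$, and to reduce the two–sided estimate to a one–dimensional ODE comparison governing a single edge. The key first step is the observation that on the event $\{a\in\S_i(t)\}$ the node $a$ is susceptible throughout $[0,t]$ and hence never transmits; conditioned on this event, the disease states of the other nodes evolve as the autonomous epidemic on the induced subgraph on $[n]\setminus\{a\}$, a process that is independent of the edge–indicator process $Z(\tau):=1_{(a,b)}(\tau)$ — itself, by the model, a stationary two–state Markov chain resampled at rate $\lambda:=\lambda^{(n)}$ to $\mathrm{Ber}(\rho_{ij}/n)$. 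Expanding $\{a$ stays susceptible$\}$ as a conditionally independent intersection over potential infectors $v$ of $a$, only the term for $v=b$ involves the edge $(a,b)$; peeling off the others and writing $[\sigma_b,t]$ for $b$'s infected interval (fixed by the subgraph process), I get $\chi_{ij}=\Pr(Z(t)=1\mid \mathcal A_b)$, where $\mathcal A_b$ is the event that an independent rate-$B_{ij}$ Poisson clock has no point in $[\sigma_b,t]$ at a time when $Z=1$. It then suffices to bound this quantity for every value of $T:=t-\sigma_b\in[0,t]$ and average.

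Writing $r:=\rho_{ij}/n$, $B:=B_{ij}$, and $L:=\int_{\sigma_b}^{t}\mathbf 1\{Z=1\}$, so that $\Pr(\mathcal A_b\mid Z)=e^{-BL}$, reversibility and stationarity of the two–state chain give $\E[\mathbf 1\{Z(t)=1\}e^{-BL}]=r\,g_1(T)$ and $\E[e^{-BL}]=r\,g_1(T)+(1-r)g_0(T)$, where $g_z(T):=\E[\exp(-B\int_0^T\mathbf 1\{Z(u)=1\}\,du)\mid Z(0)=z]$; hence $\chi_{ij}=r\,g_1(T)/(r\,g_1(T)+(1-r)g_0(T))$. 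Now $g_0,g_1$ solve the linear system $g_1'=-(B+\lambda(1-r))g_1+\lambda(1-r)g_0$, $g_0'=\lambda r(g_1-g_0)$, $g_0(0)=g_1(0)=1$ (Feynman–Kac / infinitesimal–generator argument), are positive, and the substitution $u:=1-g_1/g_0$ reduces them to the scalar Riccati equation $u'=B-(B+\lambda)u+\lambda r\,u^2$ with $u(0)=0$. Its right–hand side is a positive upward parabola in $u$ between $0$ and its smaller root, which one checks (using $r\le 1$) is $\le B/(\lambda r)$; since $u$ starts at $0$ and cannot cross that root, $0\le u(T)<B/(\lambda r)$, so $B-\lambda r\,u(T)>0$ and therefore $u'=B-\lambda u-u(B-\lambda r\,u)\le B-\lambda u$. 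Comparing with $\epsilon(T):=\tfrac B\lambda(1-e^{-\lambda T})$, which solves $\epsilon'=B-\lambda\epsilon$, $\epsilon(0)=0$, a Gr\"onwall estimate gives $u(T)\le\epsilon(T)$ for all $T\ge 0$.

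Finally, with $\rho:=g_1/g_0=1-u\in(0,1]$ one has $\chi_{ij}=r\rho/(r\rho+1-r)=r(1-u)/(1-ru)$, which for $r,u\ge 0$ and $r\le 1$ lies in $[\,r(1-u),\,r\,]$. Hence $r(1-\epsilon(T))\le\chi_{ij}\le r$; since $T\le t$ and $\epsilon$ is increasing this yields $\tfrac{\rho_{ij}}{n}(1-\tfrac{B_{ij}}{\lambda^{(n)}}(1-e^{-\lambda^{(n)}t}))\le\chi_{ij}\le\tfrac{\rho_{ij}}{n}$, and averaging over the subgraph trajectory (i.e.\ over $\sigma_b$) preserves the inequalities. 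Note the upper bound drops out of the same identity, so no separate argument is needed for it.

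The crux is not computational but structural: recognizing that conditioning on $a$'s being susceptible completely decouples the single edge $(a,b)$ from the rest of the large, strongly correlated network, and then finding the change of variables $u=1-g_1/g_0$ that collapses the estimate to a one–dimensional Riccati ODE. After that the only delicate point is verifying that the quadratic term is benign — that $u$ never exceeds $B/(\lambda r)$ — which is precisely where the admissibility constraint $\rho_{ij}/n\le 1$ enters; a cruder coupling bound (replacing $1-e^{-x}$ by $x$) would fail here, because conditioning on the target event biases the edge's most–recent–refresh age in the unfavorable direction.
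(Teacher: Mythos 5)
Your argument is correct, and it reaches the two-sided bound by a genuinely different route from the paper's. The shared crux is the decoupling of the single edge $(a,b)$ from the rest of the network after conditioning on the time-$t$ configuration: you obtain it by noting that on $\{a\in\S_i(t)\}$ node $a$ never transmits, so the epidemic on $[n]\setminus\{a\}$ is autonomous and the conditioning factorizes over the potential infectors of $a$, with only the $v=b$ factor involving $1_{(a,b)}$. That assertion is true, but be aware it is exactly what the paper's transition-sequence and $(a,b)$-agnostic-superstate machinery (Lemmas~\ref{lem:delta_approx}--\ref{lem:equation_only}) is built to justify at the level of the generator $\Q$; your one-sentence appeal to a per-edge graphical construction carries essentially all of that technical weight, and it also quietly uses the stationary initialization $1_{(a,b)}(0)\sim\mathrm{Bernoulli}(\rho_{ij}/n)$, which the paper assumes implicitly as well. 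After the decoupling the proofs diverge. The paper conditions additionally on $T$, the age of the last refresh of $(a,b)$, derives the exact formula $\Pr((a,b)\in E(t)\mid \S(t),\I(t),T,K)=\frac{\rho_{ij}/n}{\rho_{ij}/n+(1-\rho_{ij}/n)e^{B_{ij}\min\{T,K\}}}$ (Remark~\ref{rem:for_converse_result}), and then must integrate out $T$ under its conditional law given the network state, which is biased away from $\mathrm{Exp}(\lambda)$; controlling that bias (Lemmas~\ref{lem:last_needed}--\ref{lem:final_integral_lemma}, the comparison with a rate-$(\lambda-B_{ij})$ density) is the most delicate part of the appendix and is where $1-\frac{B_{ij}}{\lambda}(1-e^{-\lambda t})$ appears. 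You instead never condition on $T$: the Feynman--Kac pair $g_0,g_1$ integrates the refresh history out from the start, and the same constant emerges as a supersolution of the Riccati equation for $u=1-g_1/g_0$. Your computations check out: $q(B/(\lambda r))=B(1-1/r)\le 0$ does place $B/(\lambda r)$ at or beyond the smaller root, so $u'\le B-\lambda u$; the map $u\mapsto r(1-u)/(1-ru)$ does lie in $[r(1-u),r]$; and the final average over $\sigma_b$ is a ratio of integrals whose integrands satisfy the pointwise two-sided bound, so the bound passes through. The trade-off is that your shortcut, while cleaner and sufficient for Proposition~\ref{prop:bounds}, does not produce the finer conditional formula that the paper reuses in the proof of Theorem~\ref{thm:converse}, so it replaces this proposition's proof but not the surrounding lemmas wholesale.
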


\begin{remark}\label{rem:only} 
Given $(\S(t), \I(t))$, note that the conditional probability that a given infected node in $\A_j$ infects a given susceptible node in $\A_i$
 during a time interval $[t, t+\Delta t)$ is
$
    B_{ij}\chi_{ij}(t, \S,\I)\Delta t + o(\Delta t).$
In light of Proposition~\ref{prop:bounds}, this means that the associated conditional infection \textit{rate} $B_{ij}\chi_{ij}(t,\S,\I)$ belongs to the interval 
$$
    \left[\frac{1}{n}A_{ij}\left(1-\frac{B_{ij} }{\lambda^{(n)} }(1-e^{-\lambda^{(n)} t}) \right), \frac{1}{n}A_{ij}\right].
$$
On taking expectations, we realize that the same applies to the associated unconditional infection rate as well. Hence, the total rate of infection transmission from all of $\A_j$ to any given node of $\A_i$ is at least $I_j^{(n) }(t)\times\frac{1}{n}A_{ij} \left(1-\frac{B_{ij} }{\lambda^{(n)} }(1-e^{-\lambda^{(n)} t}) \right) = A_{ij} \beta_j^{(n)} (t) \left(1-\frac{B_{ij} }{\lambda^{(n)} }(1-e^{-\lambda^{(n)} t}) \right) $ and at most $A_{ij}\beta_j^{(n)}(t)$. Since we assume $\lim_{n\rightarrow\infty}\lambda^{(n)}=\infty$, this further implies that the concerned rate is approximately $A_{ij}\beta_j^{(n)}(t)$ for large $n$, thereby giving us an interpretation of the `contact rate' $A_{ij}$ as a normalized infection rate. That is, in the limit as $n\rightarrow\infty$, the matrix $A$ quantifies the infection transmission rates between any two age groups \textit{relative} to the level of infectedness (fraction of infected persons) of the transmitting age group. Moreover, Proposition~\ref{prop:bounds} also implies that the difference between the age-wise infection transmission rates and their respective mean-field limits (which exist as per Theorem~\ref{thm:main}) is $O\left(\frac{1}{\lambda^{(n)}}\right)$.
\end{remark}

\section{A CONVERSE RESULT}

The purpose of this section is to argue that the age-structured SIR dynamics does not model an epidemic well if the infection rates $B_{ij}$ are high enough to be comparable to the edge update rate of the network.

\begin{theorem}\label{thm:converse}
Suppose $\lambda_\infty:=\lim_{n\rightarrow\infty}\lambda^{(n)} < \infty$ and that for every $p\in [m]$, there exist $s_{p,0}, \beta_{p,0}\in [0,1]$ such that $s_p^{(n)}(0)\rightarrow s_{p,0}$ and $\beta_i^{(n)}(0)\rightarrow \beta_{p,0}$ as $n\rightarrow\infty$. In addition, let $\{(y_q(t) ,w_q(t))\}_{q\in[m]}$ be the solutions of the ODEs~\eqref{eq:main_1} and~\eqref{eq:main_2}. Then, there exists no interval $[t_1,t_2]\subset[0,\infty)$ for which $\min_{p,q\in[m]}\min_{t\in[t_1,t_2] }y_p(t)w_q(t) >0$ and on which the pairs $\left\{\left(s_{q}^{(n)}(t), \beta_{q}^{(n)}(t)\right)\right\}_{q=1}^m$ uniformly converge in probability to the corresponding pairs in $\{(y_q(t),w_q(t))\}_{q=1}^m$. More precisely, for every interval $[t_1,t_2]\subset\R$ such that $y_p(t)>0$ and $w_p(t)>0$ for all $p\in[m]$ and $t\in[t_1,t_2]$, there exists a $q\in [m]$ and an $\varepsilon_q>0$ such that
$$
   \liminf_{n\rightarrow\infty}\sup_{t\in[t_1,t_2]} \Pr\left(\left\|\left(s_{q}^{(n)}(t), \beta_{q}^{(n)}(t)\right)-\left(y_q(t), w_q(t)\right)\right\|_2> \varepsilon_q\right) >0.
$$
\end{theorem}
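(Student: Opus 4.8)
The plan is to argue by contradiction. Suppose $[t_1,t_2]\subset[0,\infty)$ is an interval on which $y_p(t)>0$ and $w_p(t)>0$ for all $p\in[m]$ and $t\in[t_1,t_2]$, and on which $(s_q^{(n)},\beta_q^{(n)})$ converges uniformly in probability to $(y_q,w_q)$ for every $q$. Since all of these quantities lie in $[0,1]$, I would first note that this upgrades to uniform convergence of expectations: $\sup_{t\in[t_1,t_2]}\bigl|\E[s_q^{(n)}(t)]-y_q(t)\bigr|\to0$, $\sup_{t\in[t_1,t_2]}\bigl|\E[\beta_q^{(n)}(t)]-w_q(t)\bigr|\to0$, and, by the same argument applied to products of bounded random variables, $\sup_{t\in[t_1,t_2]}\bigl|\E[s_i^{(n)}(t)\beta_j^{(n)}(t)]-y_i(t)w_j(t)\bigr|\to0$ for all $i,j$. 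By replacing $[t_1,t_2]$ with $[\max(t_1,\epsilon),t_2]$ for a small $\epsilon>0$ — which only strengthens the non-convergence statement to be established — I may assume $t_1>0$; note also that $y_p>0$ somewhere forces $s_{p,0}>0$, hence $y_p>0$ on $[0,t_2]$ and $\min_{[0,t_2]}y_p>0$.

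Next I would integrate the first identity of Proposition~\ref{prop:basic} over $[t_1,t_2]$. Its right-hand side is uniformly bounded (by $\sum_j B_{ij}\rho_{ij}$, because $n\chi_{ij}\le\rho_{ij}$), so the fundamental theorem of calculus gives, for each $i$,
\[
  \E[s_i^{(n)}(t_1)]-\E[s_i^{(n)}(t_2)]=\int_{t_1}^{t_2}\sum_{j=1}^m B_{ij}\,\E\!\left[n\chi_{ij}(\tau)\,s_i^{(n)}(\tau)\,\beta_j^{(n)}(\tau)\right]d\tau .
\]
Letting $n\to\infty$ and using the uniform limits above, the left-hand side tends to $y_i(t_1)-y_i(t_2)=\int_{t_1}^{t_2}\sum_j B_{ij}\rho_{ij}\,y_i(\tau)w_j(\tau)\,d\tau$ (recall $A_{ij}=\rho_{ij}B_{ij}$), so
\[
  \lim_{n\to\infty}\int_{t_1}^{t_2}\sum_{j=1}^m B_{ij}\,\E\!\left[n\chi_{ij}(\tau)s_i^{(n)}(\tau)\beta_j^{(n)}(\tau)\right]d\tau=\int_{t_1}^{t_2}\sum_{j=1}^m B_{ij}\rho_{ij}\,y_i(\tau)w_j(\tau)\,d\tau .
\]
Since $n\chi_{ij}\le\rho_{ij}$ pointwise and $\E[s_i^{(n)}\beta_j^{(n)}]\to y_iw_j$, every summand on the left is asymptotically at most $B_{ij}\rho_{ij}y_iw_j$, so this identity can hold only if no summand has a strict asymptotic deficit.

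The crux is to produce such a deficit. I would fix a pair $(i,j)$ with $B_{ij}>0$ (such a pair exists provided the epidemic actually spreads) and revisit the proof of Proposition~\ref{prop:bounds}, where $n\chi_{ij}(\tau,\S,\I)$ is shown to equal, conditionally on a fixed pair $a\in\S_i(\tau)$, $b\in\I_j(\tau)$, the quantity $\rho_{ij}\,\E\!\bigl[(\rho_{ij}/n+(1-\rho_{ij}/n)e^{B_{ij}\min(K,Y)})^{-1}\bigr]$, where $K$ is the infection age of $b$ (genuinely the infection age, since $a$ being susceptible at $\tau$ means $a$ never infected anyone, so $b$'s infection path avoids the edge $(a,b)$) and $Y$ is the time elapsed since the last re-initialization of that edge; in particular $n\chi_{ij}\le\rho_{ij}\bigl(1-\E[1-e^{-B_{ij}\min(K,Y)}]\bigr)(1+O(1/n))$. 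Two observations give a uniform lower bound $p_0>0$ on $\Pr(\min(K,Y)\ge c)$ for a suitable $c>0$ and all $\tau\in[t_1+c,t_2]$: (a) since $\lambda_\infty<\infty$, the re-initialization times of any fixed edge form a Poisson process of bounded rate, so $Y\ge c$ with probability at least $e^{-2\lambda_\infty c}$ for $n$ large; (b) under the contradiction hypothesis $\beta_j^{(n)}(\tau-c)\approx w_j(\tau-c)>0$, while recoveries happen at the finite rate $\gamma_j$, so in expectation at least $e^{-\gamma_j c}I_j^{(n)}(\tau-c)$ of the nodes of $\A_j$ infected at time $\tau-c$ are still infected at time $\tau$ and hence have infection age $\ge c$ — a fraction of all currently infected nodes of $\A_j$ bounded below by $e^{-\gamma_j c}\min_{[t_1,t_2]}w_j/\bigl(4\max_{[t_1,t_2]}w_j\bigr)$. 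Hence $\E[1-e^{-B_{ij}\min(K,Y)}]\ge(1-e^{-B_{ij}c})p_0$, which gives $n\chi_{ij}(\tau,\S,\I)\le\rho_{ij}(1-\delta_0)+O(1/n)$ on an event of probability at least $p_0$, for some $\delta_0>0$; combining this with $\Pr\bigl(s_i^{(n)}(\tau)\beta_j^{(n)}(\tau)\ge\tfrac12 y_i(\tau)w_j(\tau)\bigr)\to1$ (uniformly, since $\min_{[t_1,t_2]}y_iw_j>0$) yields constants $\delta>0$ and $N\in\N$ such that $\E[n\chi_{ij}(\tau)s_i^{(n)}(\tau)\beta_j^{(n)}(\tau)]\le(\rho_{ij}-\delta)y_i(\tau)w_j(\tau)+o(1)$ for all $n\ge N$, $\tau\in[t_1+c,t_2]$.

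Finally, I would integrate this inequality over $[t_1+c,t_2]$ and the trivial bound $n\chi_{ij'}\le\rho_{ij'}$ (with $\E[s_i^{(n)}\beta_{j'}^{(n)}]\to y_iw_{j'}$) over the remaining times and indices $j'\ne j$, obtaining
\[
  \limsup_{n\to\infty}\int_{t_1}^{t_2}\sum_{j'=1}^m B_{ij'}\E\!\left[n\chi_{ij'}(\tau)s_i^{(n)}(\tau)\beta_{j'}^{(n)}(\tau)\right]d\tau\le\int_{t_1}^{t_2}\sum_{j'=1}^m B_{ij'}\rho_{ij'}y_iw_{j'}\,d\tau-B_{ij}\delta\!\int_{t_1+c}^{t_2}\!y_iw_j\,d\tau,
\]
where the subtracted term is strictly positive because $B_{ij},\delta>0$ and $y_i,w_j>0$ on the nondegenerate interval $[t_1+c,t_2]$. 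This contradicts the limit identity of the previous step, so no such interval $[t_1,t_2]$ exists, and the quantitative claim follows. The hardest part will be the third paragraph: extracting from the proof of Proposition~\ref{prop:bounds} a strict (not merely weak) upper bound on $\chi_{ij}$ that survives the conditioning on the disease configuration, and controlling the joint law of the infection age $K$ and the edge age $Y$ under that conditioning. Everything else is routine once the contradiction hypothesis is used to turn the fact that $w_j>0$ into the fact that a positive fraction of currently infected nodes were not just infected.
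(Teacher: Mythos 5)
Your proposal is correct and follows essentially the same strategy as the paper's proof: assume uniform convergence, use the exact conditional formula $\Pr((a,b)\in E(t)\mid \S,\I,T,K)=\frac{\rho_{ij}/n}{\rho_{ij}/n+(1-\rho_{ij}/n)e^{B_{ij}\min(T,K)}}$ (the paper's Remark~\ref{rem:for_converse_result}) to extract a strict multiplicative deficit of $\chi_{ij}$ below $\rho_{ij}/n$ from the fact that $\min(T,K)$ stays bounded away from $0$ with probability bounded away from $0$ when $\lambda_\infty<\infty$, feed this into Proposition~\ref{prop:basic}(i), and integrate to contradict $\E[s_i^{(n)}]\to y_i$. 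The only substantive deviation is your treatment of the infection age $K$ and of the joint event $\{T\ge c, K\ge c\}$: the paper gets $\Pr(K<\kappa_0\mid \S(t),\I(t))\le A_{\max}\kappa_0$ directly from the uniform bound $A_{\max}$ on any node's total incoming infection rate (avoiding your population-survival argument, the appeal to the convergence hypothesis at time $\tau-c$, and the restriction to $[t_1+c,t_2]$) and then handles the joint event by a simple union bound with $\kappa_0=\frac{1}{\eta\lambda_\infty}$ — precisely the "hardest part" your sketch defers.
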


\begin{proof}
    Suppose, on the contrary, that there exists a time interval $[t_1,t_2]\subset[0,\infty)$ such that $y_p(t)>0$ and $w_p(t)>0$ for all $p\in[m]$ and $t\in[t_1,t_2]$, and the following holds for all $q\in [m]$ and all $\varepsilon_q>0$:
    $$
        \liminf_{n\rightarrow\infty}\sup_{t\in[t_1,t_2]} \Pr\left(\left\|\left(s_{q}^{(n)}(t), \beta_{q}^{(n)}(t)\right)-\left(y_q(t), w_q(t)\right)\right\|_2> \varepsilon_q\right)=0,
    $$
    i.e., for a fixed $\varepsilon>0$, there exists a sequence $\{\pi(n)\}_{n=1}^\infty\subset\N$ such that 
    $$
        \lim_{n\rightarrow\infty}\sup_{t\in[t_1,t_2]} \Pr\left(\left\|\left(s_{q}^{(\pi( n))}(t), \beta_{q}^{(\pi( n))}(t)\right)-\left(y_q(t), w_q(t)\right)\right\|_2> \varepsilon\right)=0.
    $$
    We then arrive at a contradiction, as shown below. 
    
    
    We first choose an $\eta>2\left(1+\frac{A_{\max}  }{\lambda_\infty }\right)$ (where $A_{\max}:=\max\{A_{pq}:p,q\in[m]\}$) and a $t\in (t_1,t_2)$. 
    By our hypothesis and norm equivalence, for $\kappa_0:=\frac{1}{\eta\lambda_\infty}$ and for every $\delta>0$, there exists an $N_{\varepsilon,\delta}\in \N$ such that
    $$
        \Pr\left(\left\|\left(s_{q}^{(\pi(n))}\left(\tau\right), \beta_{q}^{(\pi(n))}\left(\tau\right)\right)-\left(y_q\left(\tau\right), w_q\left(\tau\right)\right)\right\|_1\leq  \varepsilon\right)\geq 1-\delta
    $$
    for all $n\geq N_{\varepsilon,\delta}$ and all $\tau\in [t-\kappa_0,t]$.

    We now define $\alpha_0:=\min_{p,q\in[m]}\min_{t\in[t_1,t_2]} y_p(t)w_q(t)$ and we let $a$ and $b$ be any two nodes such that $(a,b)\in\S_i(t)\times\I_j(t)$ for arbitrary $i,j\in[m]$. Additionally, we let $K:=t-\inf\{\tau\geq 0:b\in\I(\tau)\}$ denote the (random) time elapsed between the time at which $b$ gets infected and time $t$. We then have
    \begin{align}\label{eq:somewhat_long}
        \Pr\left(K\leq \kappa_0\mid \S(t),\I(t) \right)
        &= \Pr\left(b\text{ is infected during }[t-\kappa_0,t] \mid \S(t),\I(t) \right)\cr
        &\stackrel{(a)}\leq 1 -  e^{-A_{\max} \kappa_0}\cr
        &\stackrel{(b)}\leq A_{\max} \kappa_0\cr
        &=\frac{A_{\max}}{\eta\lambda_\infty},
    \end{align}
    where $(b)$ holds because $1-e^{-u}\leq u$ for all $u\geq 0$, and $(a)$ can be explained as follows: given $(\S(\tau),\I(\tau))$ and an infected node $c\in\I_q(\tau)$ for some time $\tau\in [t-\kappa_0,t)$, and given that $b\in\S_j(\tau)$, we know from Proposition~\ref{prop:bounds} that the conditional probability of the edge $(b,c)$ existing in the network at time $\tau$ is at most $\frac{\rho_{jq}}{\pi(n)}$. Also, as per the definition of our stochastic epidemic model, given that $(b,c)\in E(\tau)$ and given $(\S(\tau),\I(\tau))$ (and hence, also that $(b,c)\in\S_j(\tau)\times\I_q(\tau)$), the conditional rate of infection transmission  from $c$ to $b$ at time $\tau$ is $B_{jq}$. Hence, given $(\S(\tau),\I(\tau))$ (and hence, that $(b,c)\in\S_j(\tau)\times \I_q(\tau)$), the conditional rate of infection transmission from $c$ to $b$ is at most $B_{jq}\frac{\rho_{jq}}{\pi( n)}=\frac{A_{jq}}{\pi( n)}$. Under our modelling assumption that distinct edges transmit the infection independently of each other during vanishingly small time intervals, this means that, conditional on $\S(\tau)$ and $\I(\tau)$, the conditional total rate at which $b$ receives infection is at most
    $$
        \sum_{q\in[m]}\sum_{c\in\I_q(\tau)}\frac{A_{jq}}{\pi(n)}=\sum_{q\in[m]} |\I_q(\tau)|\frac{A_{jq}}{\pi(n)}=\sum_{q\in [m]} \beta_q^{(\pi(n))} (\tau)A_{jq}\leq A_{\max}\sum_{q\in[m]}\beta_{q}^{(\pi(n))}(\tau)\leq A_{\max}\cdot 1.
    $$
    Note that this upper bound is time-invariant and does not depend on $\S(\tau)$ or $\I(\tau)$ for any time $\tau$. It thus follows that, conditional on $(\S(t),\I(t))$, the rate at which $b$ gets infected is at most $A_{\max}$ throughout the interval $[t-\kappa_0,t)$ and hence, the probability that $b$ does not get infected during an interval of length $\kappa_0$ is at least $e^{-A_{\max} \kappa_0}$. This implies $(a)$.
    
    We now infer from~\eqref{eq:somewhat_long} that
    \begin{align}~\label{eq:not_long_at_all}
        \Pr\left(K\geq\kappa_0 \mid\S(t),\I(t) \right) \geq 1 - \frac{A_{\max}}{\eta\lambda_\infty}.
    \end{align}
    Next, we lower-bound $\Pr(T\geq \kappa_0\mid\S(t),\I(t))$. To this end, note from Proposition~\ref{prop:bounds} that $\Pr((a,b)\in E(t)\mid \S(t),\I(t))\leq \frac{\rho_{ij} }{\pi(n)}$. As a result, we have 
    \begin{align*}
        &|\Pr(T\geq \kappa_0\mid \S(t),\I(t))- \Pr(T\geq \kappa_0\mid (a,b)\notin E(t), \S(t),\I(t))|\cr
        &=|\Pr(T\geq \kappa_0\mid (a,b)\notin E(t), \S(t),\I(t))(1-\Pr((a,b)\in E(t)\mid \S(t),\I(t)))\cr
        &\quad+ \Pr(T\geq \kappa_0\mid (a,b)\in E(t), \S(t),\I(t))\cdot\Pr( (a,b)\in E(t) \mid \S(t),\I(t)) \cr
        &\quad- \Pr(T\geq \kappa_0\mid (a,b)\notin E(t),\S(t),\I(t))| \cr
        &\leq \frac{\rho_{ij} }{\pi(n)},
    \end{align*}
    which also means that 
    \begin{gather}\label{eq:not_long}
       | \Pr(T<\kappa_0\mid (\S(t),\I(t))) - \Pr(T<\kappa_0\mid  (a,b)\notin E(t), (\S(t),\I(t)))| 
       \leq \frac{\rho_{ij} }{\pi(n)}.
    \end{gather}
    Moreover,  for any realization $(\S_0,\I_0)$ of $(\S(t),\I(t))$, Remark~\ref{rem:should_be_the_last} asserts that
    \begin{align*}
        \Pr(T\leq \kappa_0\mid K=\kappa,(\S(t),\I(t))=(\S_0,\I_0),(a,b)\notin E(t))\leq 1-e^{-\lambda \kappa_0}
    \end{align*}
    for all $0\leq \kappa\leq t$. Since the right-hand-side above is independent of both $\kappa$ and $(\S_0,\I_0)$, it follows that
    \begin{align}\label{eq:is_it_last}
        \Pr(T\leq \kappa_0\mid (\S(t),\I(t)),(a,b)\notin E(t))\leq 1-e^{-\lambda \kappa_0}\leq \lambda\kappa_0.
    \end{align}
    Therefore, as a consequence of~\eqref{eq:not_long_at_all},~\eqref{eq:not_long},~\eqref{eq:is_it_last}, and the union bound, we have
    \begin{align}\label{eq:really_last}
        \Pr(T\geq \kappa_0, K\geq \kappa_0\mid \S(t),\I(t))&=1-\Pr(\{T<\kappa_0\}\cup\{K< \kappa_0\}\mid \S(t),\I(t))\cr
        &\geq 1 - \Pr(T<\kappa_0\mid \S(t),\I(t))-\Pr(K<\kappa_0\mid \S(t),\I(t))\cr
        &\geq  1 - \frac{A_{\max}}{\eta\lambda_\infty} - \frac{\lambda^{( \pi(n) )}}{\eta\lambda_\infty} - \frac{\rho_{ij} }{\pi(n)}.
    \end{align}
    This further yields,
    \begin{align}\label{eq:why}
        \chi_{ij}(t,\S,\I)&=\Pr((a,b)\in E(t)\mid \S(t),\I(t))\cr
        &=\Pr((a,b)\in E(t)\mid T\geq \kappa_0, K\geq \kappa_0, \S(t),\I(t))\cdot\Pr(T\geq \kappa_0, K\geq \kappa_0 \mid \S(t),\I(t)) \cr
        &\quad+ \Pr((a,b)\in E(t)\mid \{T< \kappa_0\}\cup\{K< \kappa_0\}, \S(t),\I(t))\cdot\Pr(\{T< \kappa_0\}\cup\{K< \kappa_0\}\mid  \S(t),\I(t))\cr
        &\leq \frac{\rho_{ij} }{\pi(n)}e^{-B_{ij}\kappa_0}\left(1 - \frac{A_{\max} }{\eta\lambda_\infty} - \frac{\lambda^{(\pi(n))}}{\eta\lambda_\infty} -\frac{\rho_{ij} }{\pi(n)} \right)+  \left( \frac{A_{\max}}{\eta\lambda_\infty} + \frac{\lambda^{(\pi(n))}}{\eta\lambda_\infty} + \frac{\rho_{ij} }{\pi(n)}\right)\frac{\rho_{ij} }{\pi(n)}\cr
        &=\frac{\rho_{ij} }{\pi(n)}e^{-\frac{B_{ij} }{\eta\lambda_\infty} }\left(1 - \frac{A_{\max} }{\eta\lambda_\infty} - \frac{\lambda^{(\pi(n))}}{\eta\lambda_\infty} -\frac{\rho_{ij} }{\pi(n)} \right)+  \left( \frac{A_{\max}}{\eta\lambda_\infty} + \frac{\lambda^{(\pi(n))}}{\eta\lambda_\infty} + \frac{\rho_{ij} }{\pi(n)}\right)\frac{\rho_{ij} }{\pi(n)}
    \end{align}
    where the inequality is a consequence of~\eqref{eq:really_last} and Remark~\ref{rem:for_converse_result}. Recall that $\lim_{n\rightarrow\infty}\lambda^{(n)}=\lambda_\infty$, which means that the right hand side can be made smaller than $(1+\varepsilon)\frac{\rho_{ij}}{\pi( n)}e^{-\frac{B_{ij} }{\eta\lambda_\infty}}$ by choosing $n$ large enough. Moreover~\eqref{eq:why} holds for all $t\in[t_1,t_2]$.
    
    Proposition~\ref{prop:basic} now implies that for all $t\in[t_1,t_2]$ and large enough $n$,
    \begin{align}\label{eq:last_never_comes}
        \E[s_i]'=-\sum_{j=1}^m B_{ij}\E[n\chi_{ij}s_i\beta_j] > -\sum_{j=1}^m A_{ij}(1+\varepsilon) e^{-\frac{B_{ij} }{\eta\lambda_\infty}}\E[ s_i\beta_j], 
    \end{align}
     Now, observe that for any $t\in[t_1,t_2]$, we have \begin{align}
        &\E[s_i(t)\beta_j(t)]\cr
        &\leq 1\cdot \Pr\left(\left\|\left(s_{q}^{(n)}\left(\tau\right), \beta_{q}^{(n)}\left(\tau\right)\right)-\left(y_q\left(\tau\right), w_q\left(\tau\right)\right)\right\|_1>  \varepsilon\right) \cr
        &\quad+ (w_i(t)+\varepsilon)(y_j(t)+\varepsilon) \cdot\Pr\left(\left\|\left(s_{q}^{(n)}\left(\tau\right),  \beta_{q}^{(n)}\left(\tau\right)\right)-\left(y_q\left(\tau\right), w_q\left(\tau\right)\right)\right\|_1\leq  \varepsilon\right)\cr
        &\leq \delta+ y_i(t)w_j(t)+  2\varepsilon + \varepsilon^2.
    \end{align}
    Therefore, assuming that $\delta$ and $\varepsilon$ are small enough to satisfy $\delta+2\varepsilon+\varepsilon^2<\alpha_0\left(\frac{e^{\frac{B_{ij} }{\eta\lambda_\infty}}  }{1+\varepsilon}-1\right)$,~\eqref{eq:last_never_comes} implies the existence of a constant $\varepsilon'>0$ such that :
    $$
        \E[s_i]'=-\sum_{j=1}^m B_{ij}\E[n\chi_{ij}s_i\beta_j] \geq -\sum_{j=1}^m A_{ij} y_i(t)w_j(t)+\varepsilon'=y_i'(t)+\varepsilon'.
    $$
    Since this holds for all $t\in[t_1,t_2]$, we have
    $$
        \E[s_i^{(\pi(n))}(t_2)]-y_i(t_2)\geq \E[s_i^{(\pi(n))}(t_1)] - y_i(t_1) + (t_2-t_1)\varepsilon'
    $$
    for all sufficiently large $n$. Here, we observe that $\{s_i^{(n)}(t_1),\beta_j^{(n)}(t_1):i,j\in[m],n\in\N\}$ are bounded by the constant function $1$, which is integrable with respect to probability measures. Therefore, $\{s_i^{(n)}(t_1):i,j\in[m],n\in\N\}$ are uniformly integrable. Since they converge in probability to $\{y_i(t_1):i,j\in[m]\}$ (by hypothesis), it follows by Vitali's Convergence Theorem that they also converge in $L^1$-norm. Thus,  $\E[s_i^{(n)}(t_1)]-y_i(t_1)\rightarrow 0$ as $n\rightarrow\infty$, thereby implying that 
    \begin{equation}\label{eq:not_align}
         \liminf_{n\rightarrow\infty} \left(\E[s_i^{( \pi( n))}(t_2)]-y_i(t_2)\right)\geq (t_2-t_1)\varepsilon'.
    \end{equation}
   
    On the other hand, Vitali's Convergence Theorem and our hypothesis also imply that $\E[s_i^{(n)}(t_2)]\rightarrow y_i(t_2)$ as $n\rightarrow\infty$, which contradicts~\eqref{eq:not_align}. Hence, our hypothesis that $\{s_q^{(n)}(t),\beta_{q}^{(n)}(t)\}_{q\in[m]}$  converge in probability to the solutions of~\eqref{eq:main_1} and~\eqref{eq:main_2} uniformly on the interval $[t_1,t_2]$ is false. This completes the proof.
\end{proof}
\noindent\\\\
Before interpreting Theorem~\ref{thm:converse}, we first note that the result only applies to the time intervals on which $\{y_i(t)w_j(t):i,j\in[m]\}$ are positive throughout the interval. Although this condition appears stringent, it is mild from the viewpoint of epidemic spreading in the real world. This is because, in practice we are only interested in time periods during which every age group has infected cases (which ensures that $\beta_j(t)>0$ for all $j\in [m]$), and most epidemics leave behind uninfected individuals (thereby ensuring that $s_i(t)>0$ for all $i\in[m]$). Therefore, Theorem~\ref{thm:converse} applies to all time intervals of practical interest.

Restricting our focus to such intervals, Theorem~\ref{thm:converse} asserts that, if the edge update rate does not go to $\infty$ with the population size, then there exists a positive lower bound on the probability of the age-wise infected and susceptible fractions differing significantly from the corresponding solutions of the age-structured SIR ODEs at one or more points of time in the considered time interval. At this point, we remark that for large populations, the edge update rate $\lambda$ is approximately the reciprocal of the mean duration of every interaction in the network. This means that the greater the value of $\lambda$, the faster will be the changes that occur in the social interaction patterns of the network. Therefore, in conjunction with Theorem~\ref{thm:main}, Theorem~\ref{thm:converse} enables us to draw the  following inference: the age-structured SIR model can be expected to approximate a real-world epidemic spreading in a large population accurately if and only if the social interaction patterns of the network change rapidly with time. This is more likely to be the case in crowded public places such as supermarkets and airports. 

There is another way to interpret Theorems~\ref{thm:main} and~\ref{thm:converse}. Note that we have assumed that the sequence of edge states realized during the timeline of the epidemic are independent for every pair of nodes in the network. Therefore, for greater values of $\lambda$, the network structure becomes more unrecognizable from its past realizations. Thus, the age-structured SIR model can be expected to approximate epidemic spreading well if and only if the network is highly memoryless, i.e., if and only if the network continually ``forgets'' its past interaction patterns throughout the timeline of the epidemic under study.

\begin{remark}
Observe from the proof of Theorem~\ref{thm:converse} that the difference between $\E[s_i]'$, the first derivative of the expected fraction of infected nodes in $\A_i$, and $y_i'$, the first derivative of the corresponding ODE solution $y_i(t)$, is small only if $e^{-\frac{B_{ij} }{\eta\lambda_\infty}}$ is close to 1, which happens when $\lambda_\infty\gg B_{ij}$. Moreover, this observation is consistent with Remark~\ref{rem:only}, according to which the total infection rate from $[n]=\cup_{j=1}^m\A_j$ to any given susceptible node in $\A_i$ is close to $\sum_{j=1}^m A_{ij}\beta_j(t)$ (and hence, in close agreement with the ODEs~\eqref{eq:age_struct}) when $\frac{B_{ij} }{\lambda^{(n)}}\ll 1$. Along with Theorems~\ref{thm:main} and~\ref{thm:converse}, this means that the age-structured SIR model is likely to approximate real-world epidemic spreading well if and only if the infection transmission rates are negligible when compared to the social mixing rate $\lambda$.
\end{remark}
Intuitively, when $\frac{B_{ij}}{\lambda}\ll 1$, the time scales (the mean duration of time) over which the concerned disease spreads from any age group to any other age group are orders of magnitude greater than the time scale over which the network is updated. As a result, the independence of the sequences of edge state updates ensures that most of the possible realizations of the network structure are attained over the time scale of infection transmission. Equivalently, from the viewpoint of the pathogens causing the disease, the effective network structure (the network topology averaged over any of the age-wise infection timescales) is close to being a complete graph. Hence, by extrapolating the existing results on mean-field limits of epidemic processes on complete graphs (such as~\cite{armbruster2017elementary}) to heterogeneous epidemic models, we can assert that the age-structured SIR ODEs are able to approximate the epidemic propagation with a high accuracy. 

On the other hand, if the infection rates $B_{ij}$ are too high (and hence, comparable to the social mixing rate $\lambda$, which is always finite in reality), the pathogens perceive a randomly generated network even on the time scale of infection transmission. Since this random network is sparse (because we assume the expected node degrees to be constant, which results in the edge probability scaling inversely with the population size), it follows that the number of transmissions occurring in any given time period is likely to be smaller than in the case of a complete graph. Thus, the age-structued SIR ODEs   overestimate the rate of growth of age-wise infected fractions. This is further confirmed by the sign of the inequality in~\eqref{eq:not_align}.
\section{EMPIRICAL VALIDATION}\label{sec:empirical_validation}
We now validate the age-structured SIR model in the context of the COVID-19 pandemic in Japan as follows: we first estimate the model parameters using the data provided by the Government of Japan, and we then compare the trajectories generated by the model with the reference data.
\subsection{Dataset}

We use a dataset provided by the Government of Japan at~\cite{tokyodata}. This dataset partitions the population of {the prefecture of Tokyo} into $m=5$ age groups: 0 - 19, 20 - 39, 40 - 59, 60 - 79, and 80+ years old individuals. For each age group $i\in[m]$ and each day $k$ in the year-long timeline $\Gamma=\{\text{March 10, 2020},\ldots, \text{April 9, 2021}\}$, the
dataset lists the total number of people infected in the age group until date $k$. We denote this number by $I_i^T[k]$.

\subsection{Preprocessing}
Due to several factors, such as lack of reporting/testing on the weekends, the raw data has missing information and is contaminated with noise. Therefore, using a moving average filter with a window size of {15} days, we de-noise the raw data to obtain the estimated total number of infected individuals by day $k$ in age group $i$, denoted by $I_i^T[k]$. We then estimate from the smoothed data the number of susceptible, infected, and recovered individuals in age group $i\in [m]$ on day $k$, denoted by $S_i[k]$, $I_i[k]$, and $R_i[k]$, respectively. We do this as follows: for any age group $i\in [m]$ and day $k\in\Gamma$, we have $I_i^T[k]=I_i[k]+R_i[k]$, because the cumulative number of infections $I_i^T[k]$ includes both active COVID-19 cases and closed cases (cases of individuals who were infected in the past but recovered/succumbed by day $k$). Therefore, to estimate $I_i[k]$ and $R_i[k]$ from $I_i^T[k]$, we assume that every infected individual takes exactly {$T_R=14$} days to recover. This assumption is consistent with WHO's criteria for discharging patients from isolation (i.e., discontinuing transmission-based precautions)  \cite{who1} after a period involving the first 10 days from the onset of symptoms and 3 additional symptom-free days (if the patient is originally symptomatic) or after 10 days from being tested positive for SARS-CoV-2 (if the patient is asymptomatic). After the required period, the patients were not required to re-test. Under such an assumption on the recovery time, we have $R_i[k] = I_i^T[k-T_R]$ and $I_i[k] = I_i^T[k] - I_i^T[k-T_R]$. Next, we obtain $S_i[k]$ by subtracting $I_i^T[k]$ from the total population of $\A_i$, which is obtained from the age distribution and the total population of {Tokyo}.

We must mention that in the subsequent analysis, all infected individuals are considered infectious, i.e., they can potentially transmit the SARS-CoV-2 virus to their susceptible contacts. This assumption, on which the classical SIR model and all its variants are based, is consistent with the CDC's understanding of the first wave of SARS-CoV-2 infection, which claims that every infected individual remains infectious for up to about 10 days from the onset of symptoms, though the exact duration of the period of infectiousness remains uncertain~\cite{cdc1}.

\subsection{Parameter Estimation Algorithm}\label{subsec:alg}
Before estimating the parameters of our model, we discretize the ODEs~\eqref{eq:age_struct} with a step size of 1 day and obtain the following:
\begin{align} \label{eq:age_struct_discrete}
s_i[k+1] - s_i[k] &= -s_i[k] \sum^m_{j=1}A_{ij} \beta_j[k], \cr
\beta_i[k+1] - \beta_i[k] &= s_i[k] \sum^m_{j=1}A_{ij} \beta_j[k] - \gamma_i \beta_i[k], \\
r_i[k+1] - r_i[k] &= \gamma_i \beta_i[k],\nonumber
\end{align}


A key observation here is that these equations are linear in the model parameters. Therefore, given the sets of fractions $\{s_i[k]:i\in [m],k\in\Gamma\}$, $\{\beta_i[k]:i\in [m],k\in\Gamma\}$, and $\{r_i[k]:i\in [m],k\in\Gamma\}$ (which we obtain by implementing the data processing steps described above) for all $i\in [m]$, we can express~\eqref{eq:age_struct_discrete} in the form of a matrix equation $Cx=d$, where the column vector $x\in \R^{m^2+m}$ is a stack of the parameters $\{A_{ij}:1\leq i,j\leq m\}$ and $\{\gamma_i:1\leq i\leq m\}$, the column vector $d$ is a stack of the increments $\{s_i[k+1] - s_i[k]:i\in [m], k\in\Gamma\}$, $\{\beta_i[k+1] - \beta_i[k]:i\in [m], k\in\Gamma\}$, and $\{r_i[k+1] - r_i[k]:i\in [m], k\in\Gamma\}$, and $C$ is a matrix of coefficients. Thus, solving the least-squares problem~\eqref{eq:non-neg least square} gives us the best estimates of the model parameters ${\{A_{ij}:i,j\in [m]\}\cup\{\gamma_i:i\in[m]\}}$ in the mean-square sense.
\begin{align}\label{eq:non-neg least square}
    \hat{x} = \underset{ x \geq 0}{\text{argmin}} \| Cx - d \|_2.
\end{align} 

However, the values of the contact rates $A_{ij}$ change as and when the patterns of social interaction in the network change during the course of the pandemic. For this reason, we assume that the pandemic timeline splits up into multiple phases, say $\Gamma_1,\ldots, \Gamma_s$, with the contact rates varying across phases, and we perform the required optimization separately for each phase. At the same time, we do not expect the contact rates to make quantum leaps (or falls) from one phase to the next. Therefore, for every $\ell\geq 2$, 
 in the objective function corresponding to Phase $\ell$ we introduce a regularization term that penalizes any deviation of the optimization variables from the model parameters estimated for the previous phase (Phase $\ell-1$). Adding this term also ensures that our parameter estimation algorithm does not overfit the data associated with any one phase. Our optimization problem for Phase $\ell$ thus becomes 
\begin{align} \label{eq:minimization with regularizing term}
\hat{x}^{(\ell)} = \underset{ x \geq 0}{\text{argmin}} \left( \| Cx - d \|_2 + \lambda \| x^{(\ell)} - x^{(\ell-1)} \|_2\right),
\end{align}
where $x^{(\ell)}$ is the parameter vector estimated for Phase $\ell$. 

We now summarize this parameter estimation algorithm for Phase $\ell\in [s]$ .
\begin{algorithm}[H]
        \caption{Parameter Estimation Algorithm for Phase $\ell$}
        \label{alg:parameter_estimation_algorithm}
        \hspace*{\algorithmicindent} \textbf{Input:} $(s_i[k], \beta_i[k], r_i[k])$ for all $i\in [m]$ and $k\in\Gamma_\ell$ \\
        \hspace*{\algorithmicindent} \textbf{Output:} $\hat{x}$
        \begin{algorithmic}[1]
        \Function{Get\_Parameters}{$(s_i[k], \beta_i[k], r_i[k])$}
        \For{each day, each age group}
            \State Stack the difference equations~\eqref{eq:age_struct_discrete} vertically
        \EndFor
        \State Obtain the matrix equation $Cx=d$
        \State Solve Least Squares Problem~\eqref{eq:minimization with regularizing term}
        \State \Return {$\hat{x}$}
        \EndFunction
        \end{algorithmic}
\end{algorithm}

\subsection{Phase Detection Algorithm}
We now provide an algorithm that divides the timeline of the pandemic into multiple phases in such a way that the beginning of each new phase indicates a significant change in one or more of the contact rates $\{A_{ij}:i,j\in[m]\}$. 

Given the pandemic timeline $\{p_0,\ldots, p_s\}$ (where $p_0$ denotes March 10, 2020 and $p_s$ denotes April 9, 2021), our phase detection algorithm outputs $s-1$ phase boundaries $p_1\leq p_2\leq \cdots\leq p_{s-1}$ 
that divide $[p_0,p_s)$ into $s$ phases, namely $\Gamma_1=[p_0,p_1),\Gamma_2=[p_1,p_2),\ldots, \Gamma_s=[p_{s-1},p_s)$. Central to the algorithm are the following optimization problems:

Problem (a): Unconstrained Optimization
\begin{align} \label{eq:unconstrained}
\underset{x\geq 0}{\text{minimize}}\ \ \ \| C_{[p, p+w)}x - d_{[p, p+w)} \|_2.
\end{align}

Problem (b): Constrained Optimization
\begin{align} \label{eq:constrained}
\underset{x\geq 0}{\text{minimize}}\ \ \ &\| C_{[p, p+w)}x - d_{[p, p+w)} \|_2, \cr
\text{subject to}\ \ \ &\| x - \bar{x}_{[p, p+w)} \|_2 \leq \varepsilon \| \bar{x}_{[p-\Delta p, p-\Delta p+w)} \|_2.
\end{align}


In these problems, $p\in\Gamma$ denotes the \textit{start date} (chosen recursively as described in Algorithm~\ref{alg:phase_detection_algorithm}), $w\in\N$ is the \textit{optimization window}, $\Delta p<w$ is the algorithm step size, $[p,p+w)$ denotes a {$w$}-day period from day $p$, $C_{[p,p+w)}$ and $d_{[p,p+w)}$ are obtained from $\{(s_i[k],\beta_i[k],r_i[k]):i\in [m],k\in\{p,\ldots, p+w\}\}$ by using the procedure described in Section~\ref{subsec:alg}, and $\bar{x}:=\argmin_{x\geq 0}\|C_{[p,p+w)}x-d_{[p,p+w)}\|_2$ is the parameter vector estimated by Problem (a). We set $w=30$ (days), and the quantities $\Delta p$ and $\varepsilon$ are pre-determined algorithm parameters whose choice is discussed in the next subsection.

Observe that both Problem and Problem (b) result in the minimization of the  mean-square error~\eqref{eq:least-squares-error}, where ${\{(\hat s_i[k],\hat \beta_i[k],\hat r_i[k]):i\in [m],k\in\{p,\ldots, p+w\}\}}$ are the model-generated values (estimates) of the susceptible, infected, and recovered fractions  $\{(s_i[k],\beta_i[k],r_i[k]):i\in [m],k\in\{p,\ldots, p+w\}\}$. Also note that Problem (a) performs this minimization while ignoring all the previously estimated model parameters, whereas Problem (b) performs the same minimization while constraining  $x$ to remain close to the parameter vector estimated for the period $[p-\Delta p, p-\Delta p + w)$. However, if the contact rates do not change significantly around day $p$, then the additional constraint imposed in Problem (b) should be satisfied automatically (without imposition) in Problem (a), which should in turn result in the same mean-square error for both the problems. 
\begin{align}\label{eq:least-squares-error}
\mathcal{E}  = \frac{1}{ 3m(w+1) } \sum_{i\in [m]}\sum_{k\in \{p,\ldots, p+w\}} \bigg( (s_i[k]-\hat{s}_i[k])^2  +(\beta_i[k]-\hat\beta_i[k])^2 +(r_i[k]-\hat{r}_i[k])^2\bigg).
\end{align}

Therefore, after solving Problems (a) and (b), our phase detection algorithm compares $\mathcal E_{(a)p}$ (the mean-square error for Problem (a)) with $\mathcal E_{(b)p}$ (the mean-square error for Problem (b)) as follows: using~\eqref{eq:least-squares-error}, the algorithm first computes $\mathcal E_{(a)p}$ and $\mathcal E_{(b)p}$. It then compares $\frac{| \mathcal{E}_{(b)p} - \mathcal{E}_{(a)p} |}{\mathcal{E}_{(a)p} }$ with $\delta$, a positive threshold whose choice is discussed in the next subsection. If
\begin{align}\label{eq:mae_condition}
\frac{| \mathcal{E}_{(b)p} - \mathcal{E}_{(a)p} |}{\mathcal{E}_{(a)p} } > \delta,
\end{align}
then $p$ is identified as a phase boundary. Otherwise,  the algorithm increments the value of $p$ by $\Delta p$, checks whether the interval $[p,p+w)$ is part of the timeline $\Gamma$, and repeats the entire procedure described above.

Finally, the algorithm merges every short phase (length $\leq 20$ days) with its predecessor by deleting the appropriate phase boundary(s). There are two reasons for this step. First, the contact rates are believed to change not instantly but with a transition period of positive duration. Second, 
since the data used is noisy, to avoid overfitting the data it is necessary for the number of data points per phase (given by $2m$ times the number of days per phase) to significantly exceed $m^2 + m$, the number of model parameters to be estimated per phase.

We now provide the pseudocode for the entire  algorithm. Observe that Problems (a) and (b) are both convex optimization problems. This enables us to use the Embedded Conic Solver (ECOS) \cite{domahidi2013ecos} of CVXPY \cite{diamond2016cvxpy,agrawal2018rewriting} to implement our algorithm.

\begin{algorithm}[H]
        \caption{Phase Detection Algorithm}
        \label{alg:phase_detection_algorithm}
        \hspace*{\algorithmicindent} \textbf{Input:} $(s_i[k], \beta_i[k], r_i[k])$ for all $i\in [m]$ and $k\in\Gamma$ \\
        \hspace*{\algorithmicindent} \textbf{Output:} Set of phase boundaries $\mathcal{B}$
        \begin{algorithmic}[1]
        \Function{Detect\_Phases}{$(s_i[k], \beta_i[k], r_i[k])$}
        \State Initialize set of phase boundaries $\mathcal{B}\leftarrow \phi$ 
        \State Initialize start date $p\leftarrow \Delta p$
        \While{ $p \in \Gamma$}
            \State Solve Problem (a) for window $[p, p+w)$
            \State Solve Problem (b) for window $[p, p+w)$ \If{condition~\eqref{eq:mae_condition} holds}
                \State $\mathcal B\leftarrow \mathcal B\cup\{p\}$
            \EndIf
            \State $p\leftarrow p +\Delta p$
        \EndWhile
        \State Initialize $p_{\text{start}}\leftarrow0$
        \State Initialize $\mathbf b\leftarrow\text{list}(\mathcal B)$
        \State Sort $\mathbf{b}$ in ascending order
        \For{$p\in \mathbf b$} 
            \If{$p-p_{\text{start}}\leq 20$}
                \State $\B\leftarrow\B \setminus\{p\}$
            \Else
                \State $p_{\text{start}}=p$
            \EndIf
        \EndFor
        \State \Return {$\mathcal{B}$}
        \EndFunction
        \end{algorithmic}
\end{algorithm}

\subsection{Selection of Algorithm Parameters}



We now explain our parameter choices for the algorithms described above.

\subsubsection{Phase Detection Algorithm}
As mentioned earlier, for Algorithm~\ref{alg:phase_detection_algorithm}, we set $\Delta p=5$ days and the optimization window $w = 30$ days. This ensures that the optimization window is large enough for the number of model parameters to be significantly smaller than the  number of data points used to estimate these parameters in Problems (a) and (b). In addition, we set $\delta = 3$, and $\varepsilon = 10^{-4}$ for the following reasons:
\begin{enumerate}
    \item $\varepsilon = 10^{-4}$: If both $[p,p+w)$ and $[p-\Delta p,p-\Delta p+w)$ are sub-intervals of the same phase, then the same set of contact rates (and hence the same parameter vector $x$) should apply to the network during both the time intervals.
    \item $\delta = 3$: If day $p$ marks the beginning of a new phase (i.e., a new set of contact rates), we expect the least-squares error~\eqref{eq:least-squares-error} to increase significantly upon the imposition of the constraint introduced in~\eqref{eq:constrained}.
\end{enumerate}

\subsubsection{Parameter Estimation Algorithm}
We set $\lambda = 10^{-5}$ in \eqref{eq:minimization with regularizing term}. This small but non-zero value is consistent with our belief that around every phase boundary, contact rates change gradually but significantly during a transition period involving the phase boundary. 

\subsection{Results}
We now present the results of implementing both the algorithms on our chosen dataset. 

\subsubsection{Phase Detection}
Algorithm~\ref{alg:phase_detection_algorithm} detects the following phases.
\begin{table}[H]
\scriptsize
\centering
\fontsize{7.95}{10}\selectfont
  \begin{tabular}{|c|c|c|c|} \hline
    Phase & From & To & Corresponding Events \\ \hline
    1 & Mar 10 2020 & Mar 28 2020 & Closure of Schools\\
    2 & Mar 28 2020 & April 23 2020 & Issuance of State of Emergency\\
    3 & April 23 2020 & May 20 2020 & \\
    4 & May 20 2020 & Jun 22 2020 & \\
    5 & Jun 22 2020 & Jul 24 2020 & Summer Vacation \\ 
    6 & Jul 24 2020 & Aug 25 2020 & Obon, Summer Vacation \\
    7 & Aug 25 2020 & Sep 23 2020 & Summer Vacation\\
    8 & Sep 23 2020 & Oct 20 2020 & ``Go to Travel'' Campaign\\ & & & Relaxation of Immigration Policy\\
    9 & Oct 20 2020 & Nov 14 2020 & 
      ``Go to Eat'' Campaign\\ & & &  ``Go to Travel'' Campaign\\
    10 & Nov 14 2020 & Dec 19 2020 & \\
    11 & Dec 19 2020 & Jan 12 2021 & Issuance of State of Emergency\\
     & & & Winter Vacation\\
    12 & Jan 12 2021 & Feb 07 2021 &\\
    13 & Feb 07 2021 & Apr 09 2021 &\\ \hline
  \end{tabular}
  \caption{Phases Detected by Algorithm~\ref{alg:phase_detection_algorithm}~\cite{wiki,karako2021overview}}
  \label{Tab:Major_events_cal}
\end{table}

Although some of the detected phases can be accounted for by identifying changes in governmental policies and major social events, many of them seem to result from changes in social interaction patterns that cannot be explained using public information sources (such as news websites). However, this is consistent with out intuition that  social behavior is inherently dynamic -- it displays significant changes even in the absence of government diktats and important calendar events. Moreover, except for the first phase, the length of every phase is at least 25 days, which points to the likely scenario that it takes at least 3 to 4 weeks for the contact rates to change significantly. This could be true because social behavior is often unorganized. In particular, the interaction patterns of any one individual are often not in synchronization with those of others.

Another noteworthy inference to be drawn from Table~\ref{Tab:Major_events_cal} and Figure~\ref{fig:daily_cases} is that policy changes initiated by  governments have a delayed effect at times. For example, the ``Go to Travel'' and the ``Go to Eat'' campaigns, launched between mid-September and mid-November (Phases 8 and 9), seem to have caused a spike in daily case counts in the subsequent phases (Phases 10 and 11). Likewise, the State of Emergency issued in Phase 11 seems to have come to fruition in Phase 12 and its effects appear to have remained until the last phase (Phase 13).

\subsubsection{Parameter Estimation and Its Implications}
Figure~\ref{fig:daily_cases} below plots the original and the model-generated fractions of infected individuals in each age group as functions of time.
\begin{figure}[H]
\includegraphics[scale=0.5]{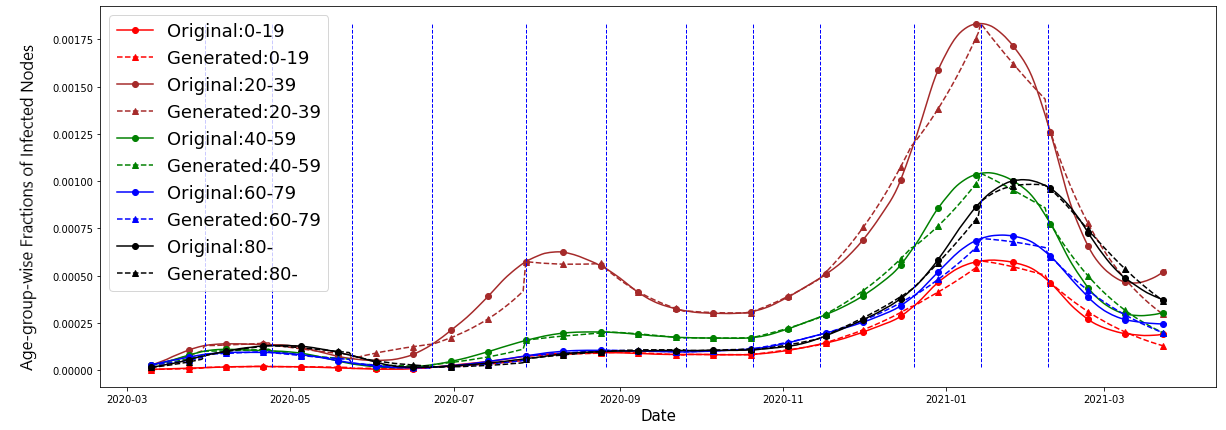}
\caption{Age-wise Daily Fractions of Infected Individuals in Tokyo, Japan: Original and Generated Trajectories}
\label{fig:daily_cases}
\end{figure}
Figure \ref{fig:parameters} plots the estimated contact rates and labels the 10 most significant ones among the 25 rates. 

As seen in Figure \ref{fig:daily_cases}, three COVID-19 surges or ``waves'' occur during the considered timeline. For each wave, we explain below the corresponding contact rate variations and their implications with the help of the mobility data of Tokyo (Figure~\ref{fig:mobility}) collected by Google \cite{google_mobility}.

\begin{figure*}[htb]
\includegraphics[scale=0.44]{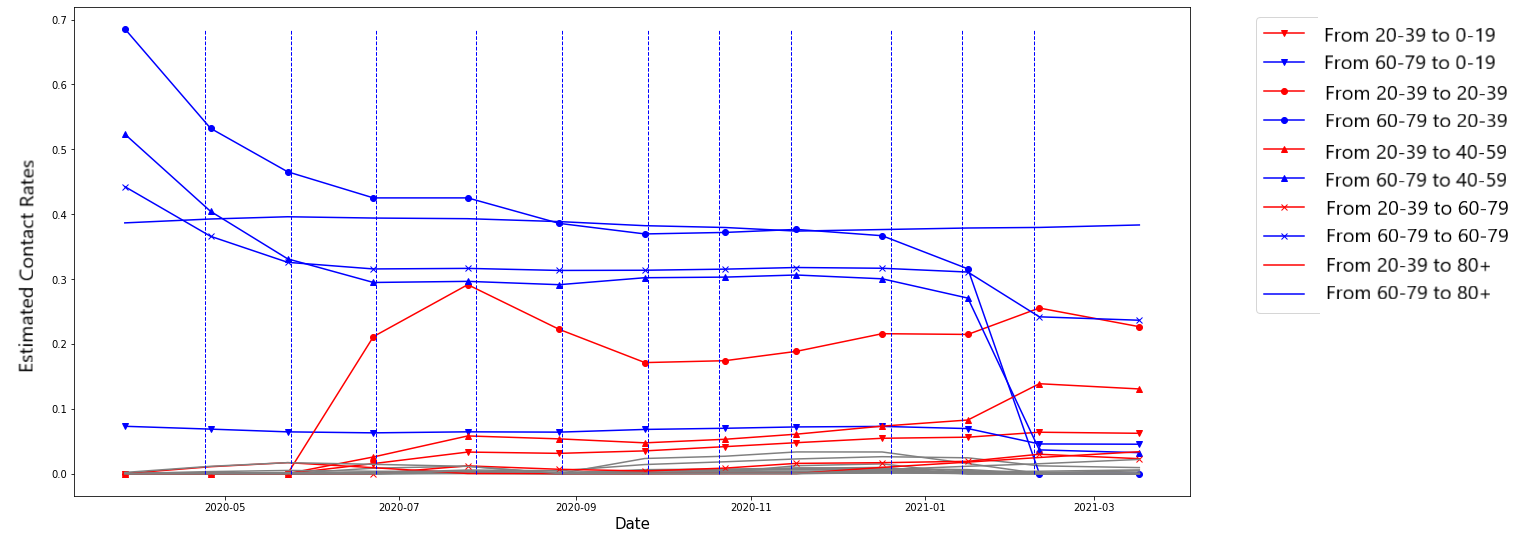}
\caption{Estimated Contact Rate Between Groups}
\label{fig:parameters}
\end{figure*}

\begin{figure*}[htb]
\includegraphics[width=190mm]{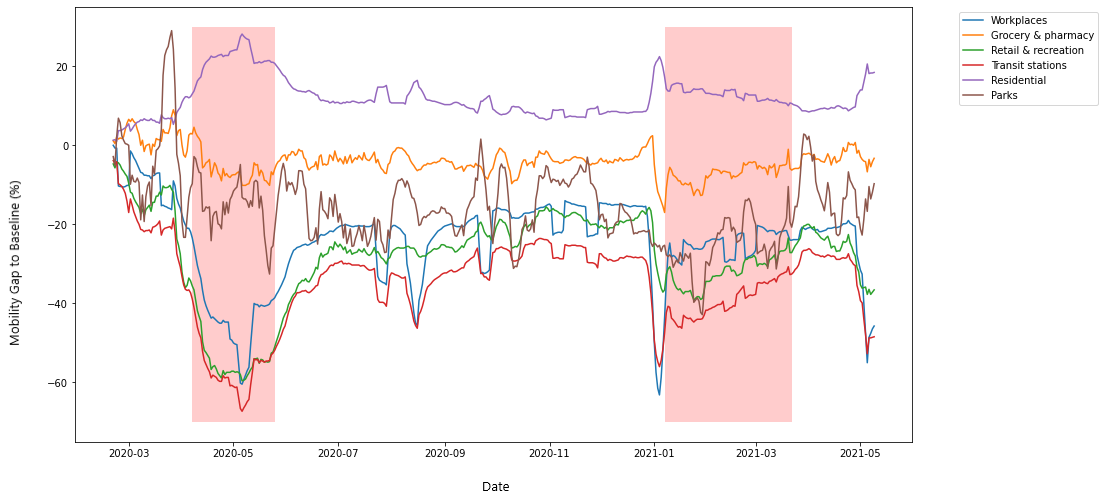}
\caption{Mobility for Each Type of Place by Google}
\footnotesize The period in which state of emergency is issued is highlighted in red.
\label{fig:mobility}
\end{figure*}
\subsubsection*{The First Wave (March 2020 - June 2020, Phases 1 - 3)}
This wave corresponds to a rapid surge in daily cases across the world followed by various governmental measures such as issuance of national emergencies, tightening of immigration policies, home quarantines, and school closures. In Japan, the national emergency consisted of various measures such as restrictions on service times in restaurants and bars, enforcement of work from home, and a limit on the number of people attending public events. As a result of these measures, the mobility of workplaces, retail and recreation, and transit stations  dropped dramatically in April 2020 and remained low  for over a month (Figure \ref{fig:mobility}).

This drop is reflected in our simulation results (Figure~\ref{fig:parameters}), which show that the three greatest contact rates decreased steadily from April to June. However, Figure~\ref{fig:parameters} also shows that contact rates from the age group 60-79 to most other age groups (shown in blue) remained remarkably high throughout the timeline $\Gamma$. This may be because most people admitted to nursing homes are aged above 60 and frequently come in contact with the relatively younger care-taking staff. More strikingly, the contact rate from age group 60-79 to age group 80+ is consistently high. This could be because there is a significant number of married couples with members from both these age groups (thereby resulting in a high value of $\rho_{ij}$ for $(i,j)=(m,m-1)$)  and because the age group 80+ has the lowest immunity levels, which leads to a large effective $B_{ij}$ (infection rate) for $(i,j)=(m,m-1)$. 
\subsubsection*{The Second Wave (July 2020 - September 2020, Phases 4 - 7)}
The most intriguing aspect of the second wave is that the wave subsided without any significant governmental interventions (such as the issuance of a nationwide emergency). To explain this phenomenon, some researchers point out that (i) the rate of PCR testing increased {in July} and thus more infections were detected in the first few weeks of the second wave, and (ii) people's mobility decreased in August during the Japanese summer vacation period called ``Obon'' \cite{karako2021overview}. As we can infer from Figure~\ref{fig:mobility}, this decrease in mobility occurs primarily at workplaces and transit stations~\cite{karako2021overview}.  

Besides Figure~\ref{fig:mobility}, our simulation results provide some insight into the second wave. Figure \ref{fig:daily_cases} shows that the contact rates from age group 60-79 to other age groups do not show any increase during the first few weeks of the wave. However, the intra-group contact rate of the age group 20-39 increases rapidly before this period and drops significantly in August, corresponding to a decrease in daily cases. This strongly suggests that the social activities of those aged between 20-39 played a key role in the second wave. Meanwhile,  contact rates from the age group 60-79  decreased after the first wave, possibly because of an increase in the proportion of  quarantined individuals among the elderly, which in turn could have resulted from an increased public awareness of older age groups' higher susceptibility to the virus.

\subsubsection*{The Third Wave (October 2020 - January 2021, Phases 8 - 11)}
This wave was the most severe of the three because in October, a policy promoting domestic travel (the ``Go to Travel'' campaign) was implemented in the Tokyo prefecture and eating out was promoted as well (as part of the ``Go to Eat'' campaign). In addition, Japan started relaxing its immigration policy in October. \cite{karako2021overview} points out that the ``major factors for this rise include the government’s implementation of further policies to encourage certain activities, relaxed immigration restrictions, and people not reducing their level of activity''. This observation is supported by Figure~\ref{fig:mobility}, which shows that there is no drop in mobility in any category during the third wave. As a result, daily infection counts dropped only after the second state of emergency was issued by the government on January 7, 2021.

In agreement with these observations are our simulation results (Figure~\ref{fig:daily_cases}), which show that the age group 60-79 remained the most infectious throughout the third wave, and that the contact rates from the age group 20-39 gradually increased in the early weeks of the wave. This was followed by a remarkable decrease in the intra-group contact rate of the age group 60-79 from mid-January onwards. \\

\subsection*{Comparing the Age Groups on the Basis of Infectiousness and Susceptibility}

It is evident from Figure~\ref{fig:parameters} that among all the five age groups, members of the youngest age group (0-19) are the least likely to contract COVID-19. This validates the current understanding of the scientific community that children and teenagers are more immune to the disease than adults. At the opposite extreme, the age groups 80+ and 20-39 appear to be the most vulnerable, possibly because members of the former group have the lowest immunity levels and the latter group exhibits the highest levels of mobility and social activity.

Besides throwing light on how the likelihood of receiving infection varies across age groups, Figure~\ref{fig:parameters} also throws light on how the likelihood of transmitting the infection varies across age groups. From the figure, the two most infectious age groups are clearly 60-79 and 20-39. Surprisingly, the age group 80+  is found to be less infectious than the group 60-79, perhaps because of the lower social mobility of the former. The figure also shows that the age groups 0-19, 40-59 and 80+ are remarkably less infectious than the other two age groups. However, we need additional empirical evidence to validate these findings, and it would be interesting to see whether our inferences  are echoed by future empirical studies.
\section{CONCLUSION AND FUTURE DIRECTIONS}\label{sec:conclusion}
\color{black}

We have analyzed the age-structured SIR model of epidemic spreading from both theoretical and empirical viewpoints. Starting from a stochastic epidemic model, we have shown that the ODEs defining the age-structured SIR model are the mean-field limits of a continuous-time Markov process evolving over a time-varying network that involves random, asynchronous interactions if and only if the social mixing rate grows unboundedly with the population size. We have also provided a lower-bound on the associated convergence rates in terms of the social mixing rate. As for empirical validation, we have proposed two algorithms: a least-square method to estimate the model parameters based on real data and a phase detection algorithm to detect changes in contact rates and hence also the most significant social behavioral changes that possibly occurred during the observed pandemic timeline. We have validated our model empirically by using it to approximate the trajectories of the numbers of susceptible, infected, and recovered individuals in the prefecture of Tokyo, Japan, over a period of more than 12 months. Our results show that for the purpose of forecasting the future of the COVID-19 pandemic and designing appropriate control policies, the age-structured SIR model is likely to be a strong contender among compartmental epidemic models.

Our analysis, however, has a few limitations. First, it is not clear whether the large number of phases detected by Algorithm~\ref{alg:phase_detection_algorithm} indicates rapidly changing social interaction patterns or simply that our model is unable to approximate the pandemic over timescales significantly longer than a month. Second, the outputs of our algorithms have a few surprising implications that are as yet unconfirmed by independent empirical studies. For example, the estimated contact rates indicate that the age group 60-79  is consistently more infectious than the  age group 20-39, a finding that is inconsistent with the widely held belief that younger age groups are significantly more mobile than the older ones. Such apparent anomalies highlight the need for age-stratified mobility datasets that would enable further investigation into the dynamic interplay between social behavior and epidemic spreading.

\bibliographystyle{ieeetr}
\bibliography{bib}

\begin{thebibliography}{10}

\bibitem{worldometer2020covid}
Worldometer, ``{COVID}-19 {C}oronavirus {P}andemic,'' {\em World Health
  Organization, www.worldometers.info/coronavirus}, last accessed 2021,
  November 11.

\bibitem{ferguson2020report}
N.~Ferguson, D.~Laydon, G.~Nedjati-Gilani, N.~Imai, K.~Ainslie, M.~Baguelin,
  S.~Bhatia, A.~Boonyasiri, Z.~Cucunub{\'a}, G.~Cuomo-Dannenburg, {\em et~al.},
  ``{Report 9: Impact of non-pharmaceutical interventions (NPIs) to reduce
  COVID-19 mortality and healthcare demand},'' {\em Imperial College London},
  vol.~10, no.~77482, pp.~491--497, 2020.

\bibitem{alvarez2020simple}
F.~E. Alvarez, D.~Argente, and F.~Lippi, ``A simple planning problem for
  {COVID}-19 lockdown,'' tech. rep., National Bureau of Economic Research,
  2020.

\bibitem{acemoglu2020multi}
D.~Acemoglu, V.~Chernozhukov, I.~Werning, and M.~D. Whinston, ``A multi-risk
  {SIR} model with optimally targeted lockdown,'' {\em NBER working paper},
  no.~w27102, 2020.

\bibitem{chatterjee2020healthcare}
K.~Chatterjee, K.~Chatterjee, A.~Kumar, and S.~Shankar, ``Healthcare impact of
  {COVID}-19 epidemic in {I}ndia: A stochastic mathematical model,'' {\em
  Medical Journal, Armed Forces, India}, vol.~76, no.~2, pp.~147--155, 2020.

\bibitem{pare2021multi}
P.~E. Par{\'e}, J.~Liu, C.~L. Beck, A.~Nedi{\'c}, and T.~Ba{\c{s}}ar,
  ``Multi-competitive viruses over time-varying networks with mutations and
  human awareness,'' {\em Automatica}, vol.~123, p.~109330, 2021.

\bibitem{bullo2020current}
F.~Bullo, ``Current epidemiological models: Scientific basis and evaluation,''
  {\em Health}, vol.~4, p.~14, 2020.

\bibitem{kermack1927contribution}
W.~O. Kermack and A.~G. McKendrick, ``A contribution to the mathematical theory
  of epidemics,'' {\em Proceedings of the Royal Society of London, Series A},
  vol.~115, no.~772, pp.~700--721, 1927.

\bibitem{kurtz1970solutions}
T.~G. Kurtz, ``Solutions of ordinary differential equations as limits of pure
  jump {M}arkov processes,'' {\em Journal of Applied Probability}, vol.~7,
  no.~1, pp.~49--58, 1970.

\bibitem{benaim2008class}
M.~Benaim and J.-Y. Le~Boudec, ``A class of mean field interaction models for
  computer and communication systems,'' {\em Performance Evaluation}, vol.~65,
  no.~11-12, pp.~823--838, 2008.

\bibitem{volz2008sir}
E.~Volz, ``{SIR} dynamics in random networks with heterogeneous connectivity,''
  {\em Journal of {M}athematical {B}iology}, vol.~56, no.~3, pp.~293--310,
  2008.

\bibitem{decreusefond2012large}
L.~Decreusefond, J.-S. Dhersin, P.~Moyal, V.~C. Tran, {\em et~al.}, ``Large
  graph limit for an {SIR} process in random network with heterogeneous
  connectivity,'' {\em The Annals of Applied Probability}, vol.~22, no.~2,
  pp.~541--575, 2012.

\bibitem{mossong2008social}
J.~Mossong, N.~Hens, M.~Jit, P.~Beutels, K.~Auranen, R.~Mikolajczyk,
  M.~Massari, S.~Salmaso, G.~S. Tomba, J.~Wallinga, {\em et~al.}, ``Social
  contacts and mixing patterns relevant to the spread of infectious diseases,''
  {\em PLoS Medicine}, vol.~5, no.~3, p.~e74, 2008.

\bibitem{prem2017projecting}
K.~Prem, A.~R. Cook, and M.~Jit, ``Projecting social contact matrices in 152
  countries using contact surveys and demographic data,'' {\em PLoS
  {C}omputational {B}iology}, vol.~13, no.~9, p.~e1005697, 2017.

\bibitem{klepac2020contacts}
P.~Klepac, A.~J. Kucharski, A.~J. Conlan, S.~Kissler, M.~L. Tang, H.~Fry, and
  J.~R. Gog, ``Contacts in context: large-scale setting-specific social mixing
  matrices from the {BBC} {P}andemic {P}roject,'' {\em MedRxiv}, 2020.

\bibitem{voinsky2020effects}
I.~Voinsky, G.~Baristaite, and D.~Gurwitz, ``Effects of age and sex on recovery
  from {COVID}-19: Analysis of 5769 {I}sraeli patients,'' {\em Journal of
  {I}nfection}, vol.~81, no.~2, pp.~e102--e103, 2020.

\bibitem{ellison2020implications}
G.~Ellison, ``Implications of heterogeneous {SIR} models for analyses of
  {COVID}-19,'' tech. rep., National Bureau of Economic Research, 2020.

\bibitem{acemoglu2021optimal}
D.~Acemoglu, V.~Chernozhukov, I.~Werning, and M.~D. Whinston, ``{Optimal
  targeted lockdowns in a multigroup SIR model},'' {\em American Economic
  Review: Insights}, vol.~3, no.~4, pp.~487--502, 2021.

\bibitem{baqaee2020reopening}
D.~Baqaee, E.~Farhi, M.~J. Mina, and J.~H. Stock, ``Reopening scenarios,''
  tech. rep., National Bureau of Economic Research, 2020.

\bibitem{rampini2020sequential}
A.~A. Rampini, ``Sequential lifting of {COVID}-19 interventions with population
  heterogeneity,'' tech. rep., National Bureau of Economic Research, 2020.

\bibitem{favero2020restarting}
C.~A. Favero, A.~Ichino, and A.~Rustichini, ``Restarting the economy while
  saving lives under {COVID}-19,'' 2020.
\newblock CEPR Discussion Paper No. DP14664.

\bibitem{parasnis2021case}
R.~Parasnis, A.~Sakhale, R.~Kato, M.~Franceschetti, and B.~Touri, ``{A Case for
  the Age-Structured SIR Dynamics for Modelling COVID-19},'' in {\em 2021 60th
  IEEE Conference on Decision and Control (CDC)}, pp.~5508--5513, IEEE, 2021.

\bibitem{tkachenko2021time}
A.~V. Tkachenko, S.~Maslov, A.~Elbanna, G.~N. Wong, Z.~J. Weiner, and
  N.~Goldenfeld, ``Time-dependent heterogeneity leads to transient suppression
  of the {COVID}-19 epidemic, not herd immunity,'' {\em Proceedings of the
  National Academy of Sciences}, vol.~118, no.~17, 2021.

\bibitem{dolbeault2020heterogeneous}
J.~Dolbeault and G.~Turinici, ``{Heterogeneous social interactions and the
  {COVID}-19 lockdown outcome in a multi-group SEIR model},'' {\em Mathematical
  Modelling of Natural Phenomena}, vol.~15, p.~36, 2020.

\bibitem{contreras2020multi}
S.~Contreras, H.~A. Villavicencio, D.~Medina-Ortiz, J.~P. Biron-Lattes, and
  {\'A}.~Olivera-Nappa, ``A multi-group {SEIRA} model for the spread of
  {COVID}-19 among heterogeneous populations,'' {\em Chaos, Solitons \&
  Fractals}, vol.~136, p.~109925, 2020.

\bibitem{viguerie2021simulating}
A.~Viguerie, G.~Lorenzo, F.~Auricchio, D.~Baroli, T.~J. Hughes, A.~Patton,
  A.~Reali, T.~E. Yankeelov, and A.~Veneziani, ``{Simulating the spread of
  COVID-19 via a spatially-resolved
  Susceptible--Exposed--Infected--Recovered--Deceased (SEIRD) model with
  heterogeneous diffusion},'' {\em Applied Mathematics Letters}, vol.~111,
  p.~106617, 2021.

\bibitem{harris2020data}
J.~E. Harris, ``{Data from the COVID-19 epidemic in Florida suggest that
  younger cohorts have been transmitting their infections to less socially
  mobile older adults},'' {\em Review of Economics of the Household}, vol.~18,
  no.~4, pp.~1019--1037, 2020.

\bibitem{giagheddu2020macroeconomics}
M.~Giagheddu and A.~Papetti, ``The macroeconomics of age-varying epidemics,''
  {\em Available at SSRN 3651251}, 2020.

\bibitem{janiak2021covid}
A.~Janiak, C.~Machado, and J.~Tur{\'e}n, ``{COVID}-19 contagion, economic
  activity and business reopening protocols,'' {\em Journal of Economic
  behavior \& Organization}, vol.~182, pp.~264--284, 2021.

\bibitem{armbruster2017elementary}
B.~Armbruster and E.~Beck, ``Elementary proof of convergence to the mean-field
  model for the {SIR} process,'' {\em Journal of Mathematical Biology},
  vol.~75, no.~2, pp.~327--339, 2017.

\bibitem{simon2013exact}
P.~L. Simon and I.~Z. Kiss, ``From exact stochastic to mean-field {ODE} models:
  a new approach to prove convergence results,'' {\em The IMA Journal of
  Applied Mathematics}, vol.~78, no.~5, pp.~945--964, 2013.

\bibitem{tokyodata}
{T}okyo~{M}etropolitan {G}overnment, ``{COVID}-19 {I}nformation {W}ebsite,''
  {\em https://stopcovid19.metro.tokyo.lg.jp/en}, last accessed 2021, September
  28.

\bibitem{who1}
W.~H. Organization, ``Criteria for releasing {COVID}-19 patients from
  isolation,'' {\em
  https://www.who.int/news-room/commentaries/detail/criteria-for-releasing-covid-19-patients-from-isolation},
  last accessed 2021, September 28.

\bibitem{cdc1}
C.~for Disease~Control and Prevention, ``{Clinical Questions about COVID-19:
  Questions and Answers / When is someone infectious?},'' {\em
  https://www.cdc.gov/coronavirus/2019-ncov/hcp/faq.html\#Transmission}, last
  accessed 2021, September 28.

\bibitem{domahidi2013ecos}
A.~Domahidi, E.~Chu, and S.~Boyd, ``{ECOS: An SOCP solver for embedded
  systems},'' in {\em 2013 European Control Conference (ECC)}, pp.~3071--3076,
  IEEE, 2013.

\bibitem{diamond2016cvxpy}
S.~Diamond and S.~Boyd, ``{CVXPY}: {A} {P}ython-embedded modeling language for
  convex optimization,'' {\em Journal of Machine Learning Research}, vol.~17,
  no.~83, pp.~1--5, 2016.

\bibitem{agrawal2018rewriting}
A.~Agrawal, R.~Verschueren, S.~Diamond, and S.~Boyd, ``A rewriting system for
  convex optimization problems,'' {\em Journal of Control and Decision},
  vol.~5, no.~1, pp.~42--60, 2018.

\bibitem{wiki}
Wikipedia, ``{Timeline of the COVID-19 pandemic in Japan},'' last accessed
  2021, September 28.

\bibitem{karako2021overview}
K.~Karako, P.~Song, Y.~Chen, W.~Tang, and N.~Kokudo, ``Overview of the
  characteristics of and responses to the three waves of {COVID}-19 in japan
  during 2020-2021,'' {\em BioScience Trends}, 2021.

\bibitem{google_mobility}
Google, ``{COVID-19 Community Mobility Reports},'' {\em
  https://www.google.com/covid19/mobility/}, last accessed 2021, September 28.

\bibitem{lalley}
S.~P. Lalley, ``Continuous {T}ime {M}arkov {C}hains,'' {\em Lecture Notes on
  Stochastic Processes II}, 2012.
\newblock
  \href{https://galton.uchicago.edu/~lalley/Courses/313/ContinuousTime.pdf}{galton.uchicago.edu/{\texttildelow}lalley/Courses/313/ContinuousTime.pdf}.

\end{thebibliography}
\section*{Appendix}

Our first aim is to prove Proposition~\ref{prop:basic}, which is based on the Lemma~\ref{lem:obvious}. This lemma describes a known property of continuous-time Markov chains, but we prove it nevertheless. The proof is based on the concept of \textit{jump times}, defined below.

\begin{definition} [Jump Times] The \emph{jump times} of the Markov chain $\{\X(\tau):\tau\geq 0\}$ are the random times defined by $J_0:=0$ and $J_\ell := \inf\{\tau\geq 0: \X(J_{\ell-1}+\tau)\neq \X(J_{\ell-1})\}$ for all $\ell\in\N$.
\end{definition}

Note that jump times are simply the times at which the Markov chain jumps or transitions to a new state.

\begin{lemma}\label{lem:obvious}
Let $\x\in\mathbb S$ and let $[t,t+\Delta t)\subset [0,\infty)$. Given that $\X(t)=\x$, the conditional probability that more than one state transitions occur during $[t,t+\Delta t)$ is $o(\Delta t)$. 
\end{lemma}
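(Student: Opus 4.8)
The plan is to use the finiteness of the state space — which makes every exit rate bounded — together with the strong Markov property applied at the first jump time after $t$.

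First I would record that, since the population size $n$ is fixed, the state space $\mathbb S=\{-1,0,1\}^n\times\{0,1\}^{2n(n-1)}$ is a \emph{finite} set; hence each state $\x$ has finitely many potential successors and its total exit rate $q_\x:=-\Q(\x,\x)=\sum_{\y\neq\x}\Q(\x,\y)$ is finite. Put $q^\ast:=\max_{\x\in\mathbb S}q_\x<\infty$. By time-homogeneity and the Markov property, conditioned on $\X(t)=\x$ the first holding time after $t$ — that is, the amount of time $H_1$ that the chain spends in $\x$ before its next transition — is exponential with parameter $q_\x$ (taken to be $+\infty$ if $q_\x=0$, in which case the lemma is trivial). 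Using $1-e^{-u}\le u$ for $u\ge 0$,
$$\Pr\big(H_1<\Delta t\mid \X(t)=\x\big)=1-e^{-q_\x\Delta t}\le q^\ast\,\Delta t.$$

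Next I would condition on the first jump. More than one transition occurs during $[t,t+\Delta t)$ if and only if $H_1<\Delta t$ \emph{and} at least one further transition occurs in the remaining time; in particular the second holding time $H_2$ (the time spent in the state $\X(t+H_1)$ reached at the first jump) satisfies $H_2<\Delta t$. By the strong Markov property at the stopping time $t+H_1$, the conditional law of $H_2$ given $H_1$ and $\X(t+H_1)$ is exponential with parameter $q_{\X(t+H_1)}$, so $\Pr\big(H_2<\Delta t\mid H_1,\X(t+H_1)\big)\le q^\ast\,\Delta t$. Taking expectations and using the previous display,
$$\Pr\big(\text{at least two transitions in }[t,t+\Delta t)\mid \X(t)=\x\big)\le q^\ast\,\Delta t\cdot\Pr\big(H_1<\Delta t\mid\X(t)=\x\big)\le (q^\ast\,\Delta t)^2.$$
Since $(q^\ast\,\Delta t)^2/\Delta t=(q^\ast)^2\,\Delta t\to 0$ as $\Delta t\to 0$, the right-hand side is $o(\Delta t)$, which is exactly the claim.

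There is no genuinely difficult step here. The only points requiring care are the elementary observation that $q_\x<\infty$ because $\mathbb S$ is finite — so the constant implicit in the $o(\Delta t)$ bound is allowed to depend on $n$ — and the correct invocation of the strong Markov property at $t+H_1$, which is precisely what the jump-time formalism set up just before the lemma is meant to support: given the state just reached, the next holding time is again exponential with the corresponding exit rate, independently of the past. One could equally iterate the one-step bound $1-e^{-q_{\X(s)}\Delta t}\le q^\ast\Delta t$ on the probability of a jump in $(s,s+\Delta t]$, but the two-holding-times argument is the most direct.
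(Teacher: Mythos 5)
Your proof is correct and follows essentially the same route as the paper's: both bound the probability of two or more transitions by the probability that the first two holding times are each less than $\Delta t$, use the (strong) Markov property to get that each such probability is $O(\Delta t)$, and conclude that the product is $o(\Delta t)$. The only cosmetic difference is that you use a uniform bound $q^\ast$ over the finite state space up front, whereas the paper conditions on the specific successor states $(\y,\mathbf z)$ of the embedded jump chain and then maximizes over them at the end.
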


\begin{proof}
 Let $\y, \mathbf z\in\mathbb S$ be any two states such that $\y$ and $\mathbf z$ potentially succeed $\x$ and $\y$, respectively. Also, let $\{\X(J_i)\}_{i=0}^\infty$ be the embedded jump chain of $\{\X(\tau)\}_{\tau\geq 0}$ (where $J_0:=0$). Then, given that $\X(0)=\x$, $\X(J_1)=\y$, and $\X(J_2)=\mathbf z$, the holding times $J_1$ and $J_2 - J_1$ are conditionally independent exponential random variables with parameters $q_x := |\Q(\x,\x)|$ and $q_y:=|\Q(\y,\y)|$, respectively. Therefore, given that the original Markov chain makes its first and second transitions from $\x$ to $\y$ and from $\y$ to $\mathbf z$  respectively, the conditional probability that both of these transitions occur during $[0, \Delta t)$ is given by
 \begin{align}
     &\Pr(J_2 < \Delta t\mid (\X(0),\X(J_1), \X(J_2)) = (\x,\y,\mathbf z))\cr
     &\leq \Pr(J_2 - J_1 <\Delta t, J_1<\Delta t \mid (\X(0),\X(J_1), \X(J_2)) = (\x,\y,\mathbf z))\cr
     &=\Pr(J_2-J_1<\Delta t \mid (\X(0),\X(J_1), \X(J_2)) = (\x,\y,\mathbf z))\cdot \Pr (J_1<\Delta t\mid (\X(0),\X(J_1), \X(J_2)) = (\x,\y,\mathbf z))\cr
     &=(1- e^{-q_y \Delta t})(1 - e^{-q_x\Delta t})\cr
     &= o(\Delta t).
 \end{align}
 Therefore, $\Pr(J_2<\Delta t\mid \X(0)=\x )$, which is at most $ \max_{\y,\mathbf z\in\mathbb S}\Pr(J_2<\Delta t\mid (\X(0),\X(J_1), \X(J_t)=(\x,\y, \mathbf z ))$, is $ o(\Delta t)$. Hence, given that $\X(0)=\x$, the conditional probability that at least two state transitions occur during $[0,\Delta t)$ is $o(\Delta t)$. By time-homogeneity, this means the following: given that $\X(t)=\x$, the conditional probability that at least two state transitions occur during $[t,t+\Delta t)$ is  $o(\Delta t)$. 
\end{proof}

\subsection*{Proof of Proposition~\ref{prop:basic}}

\begin{proof}
We derive the equations one by one.
\subsubsection*{\textit{Proof of \eqref{item:init_1}:}} Consider any state $\x\in\mathbb S$. Then, by the definition of $\Q$,  for any $i\in [n]$ and $a\in\S_i(\x)$, we have
\begin{align}~\label{eq:basic_infection}
    \Pr(\X(t+\Delta t) = \x_{\uparrow a} \mid \X(t) = \x)&= \Q(\x,\x_{\uparrow a})\Delta t + o(\Delta t)= \left(\sum_{k=1}^m B_{ik} E_{k}^{(a)}(\x)\right) \Delta t + o(\Delta t).
\end{align}

We now use~\eqref{eq:basic_infection} to evaluate the probability of the event $\{S_i(t+\Delta t) = S_i(t)-1\}$. To this end, let $D_\ell(U,t, \Delta t) )$ denote the event that exactly $\ell$ nodes in a given set $U\subset [n]$ recover during $[t,t+\Delta t)$ (i.e., there exist exactly $\ell$ indices $r_1,\ldots, r_\ell$ in $U$ such that $X_{r_k}(t)=1$ and $X_{r_k} (t+\Delta t)=-1$). Similarly, let $\mathscr I_\ell (U,t,\Delta t)$ denote the event that exactly $\ell$ nodes in $U$ get infected during $[t,t+\Delta t)$. Then, 
\begin{align}
    &\Pr(S_i(t+\Delta t) - S_i(t) = -1 \mid \X(t) =\x)\cr
    &= \Pr( \mathscr I_1(\A_i,t,\Delta t) \mid \X(t)=\x ) \cr
    &\stackrel{(a)}{=} \Pr(D_0([n],t,\Delta t)\cap \mathscr I_0([n]\setminus\A_i,t,\Delta t)\cap \mathscr I_1(\A_i,t,\Delta t)\mid \X(t)=\x ) + o(\Delta t)\cr
    &=\Pr\left(\cup_{a\in \S_i(\x)} \{\X(t+\Delta t) = \x_{\uparrow a}\}\mid \X(t)=\x\right) + o(\Delta t)\cr
    &=\sum_{a\in \S_i(\x)} \Pr(\X(t+\Delta t) = \x_{\uparrow a}\mid \X(t)=\x) + o(\Delta t)\cr
    &\stackrel{(b)}{=}\left(\sum_{a\in\S_i(\x)}\sum_{j=1}^m B_{ij} E_{j}^{(a)}(\x)\right) \Delta t + o(\Delta t)\cr
    &=\left( \sum_{j=1}^m \sum_{a\in\S_i(\x)}\sum_{b\in\I_j(\x)} B_{ij} 1_{(a,b)}(\x) \right) \Delta t + o(\Delta t).
\end{align}
where $(a)$ is a straightforward consequence of Lemma~\ref{lem:obvious}, and $(b)$ follows from~\eqref{eq:basic_infection}. Since this holds for all $\x\in\mathbb S$, we have
\begin{align}\label{eq:to_be_recast}
    \Pr(S_i(t+\Delta t)-S_i(t)=-1\mid \X(t) ) =\left( \sum_{j=1}^m \sum_{a\in\S_i(t )}\sum_{b\in\I_j(t )} B_{ij} 1_{(a,b)}(t ) \right) \Delta t + o(\Delta t),
\end{align}
where $\S_i(t)$, $\I_i(t)$, and $1_{(a,b)}(t)$ stand for $\S_i(\X(t))$, $\I_i(\X(t))$, and $1_{(a,b)}(\X(t))$, respectively. Since $\S(t)$ and $\I(t)$ are determined by $\X(t)$, we may express~\eqref{eq:to_be_recast} as
\begin{align}
    \Pr(S_i(t+\Delta t)-S_i(t)=-1\mid \S(t),\I(t), \X(t) ) =\left( \sum_{j=1}^m \sum_{a\in\S_i(t )}\sum_{b\in\I_j(t )} B_{ij} 1_{(a,b)}(t ) \right) \Delta t + o(\Delta t).
\end{align}
As a result, we have
\begin{align}
    \Pr(S_i(t+\Delta t)-S_i(t)=-1\mid \S(t),\I(t) ) =\left( \sum_{j=1}^m \sum_{a\in\S_i(t )}\sum_{b\in\I_j(t )} B_{ij} \E[1_{(a,b)}(t )\mid \S(t),\I(t)] \right) \Delta t+ o(\Delta t).
\end{align}
At this point we note that
$$
    \E[1_{(a,b)}(t )\mid \S(t),\I(t)] = \Pr((a,b)\in E(t)\mid \S(t), \I(t)) = \chi_{ij}(t,\S,\I).
$$
 We thus have the following for $\Delta_t S_i:= S_i(t+\Delta t)-S_i(t)$:
\begin{align}\label{eq:nine}
    \E[\Delta_t S_i \mid \S(t),\I(t)]
    &= -\Pr(\Delta_t S_i =-1\mid \S(t),\I(t) ) -\sum_{\ell = 2}^n \ell\cdot \Pr(\Delta_t S_i = -\ell) \cr
    &\stackrel{(a)}{=}-\Pr(\Delta_t S_i=-1\mid \S(t),\I(t) ) + o(\Delta t) \cr
    &=-\left( \sum_{j=1}^m \sum_{a\in\S_i(t )}\sum_{b\in\I_j(t )} B_{ij} \chi_{ij}(t) \right) \Delta t + o(\Delta t)\cr
    &=-\left( \sum_{j=1}^m  B_{ij} \chi_{ij}(t) S_i(t) I_j(t) \right) \Delta t + o(\Delta t),
\end{align}
where (a) follows from Lemma~\ref{lem:obvious}. Taking expectations on both sides of~\eqref{eq:nine} and dividing the resulting relation by $\Delta t$ now yields
\begin{align}\label{eq:last}
    \E\left[\frac{S_i(t+\Delta t)-S_i(t) }{\Delta t}\right ] &= - \sum_{j=1}^m  B_{ij} \E[n\chi_{ij}(t) s_i(t) I_j(t)] + \frac{o(\Delta t)}{\Delta t},
\end{align}
where we used that $S_i(t) = ns_i(t)$. On letting $\Delta t\rightarrow 0$ and then dividing both the sides of~\eqref{eq:last} by $n$,  we obtain \eqref{item:init_1}.

\subsubsection*{\textit{Proof of~\eqref{item:init_2}:}} Observe that for any $\x\in\mathbb S$, we have \begin{align}\label{eq:eleven}
    &\Pr(I_i(t+\Delta t) - I_i(t) = 1 \mid \X(t) =\x)\cr &=\Pr(|\I_i(\X(t+\Delta t))| - |\I_i(\X(t))| = 1\mid \X(t) = \x)\cr
    &=\Pr(\cup_{\ell= 0}^n (D_\ell(\A_i, t, \Delta t) \cap \mathscr I_{\ell+1}( \A_i, t,\Delta t ) )\mid \X(t)=\x    )\cr 
    &\stackrel{(a)}{=} \Pr( D_0(\A_i, t, \Delta t)\cap \mathscr I_1(\A_i,t,\Delta t) \mid \X(t)=\x )+o(\Delta t ) \cr
    &\stackrel{(b)}{=} \Pr(D_0([n],t,\Delta t)\cap \mathscr I_0([n]\setminus\A_i,t,\Delta t) \cap \mathscr I_1(\A_i,t,\Delta t)\mid \X(t)=\x ) + o(\Delta t)\cr
    &=\Pr\left(\cup_{c\in \S_i(\x)}\{ \X(t+\Delta t) = \x_{\uparrow c}\}\mid \X(t)=\x\right) + o(\Delta t)\cr
    &=\sum_{c\in \S_i(\x)} \Pr(\X(t+\Delta t) = \x_{\uparrow c}\mid \X(t)=\x) + o(\Delta t)\cr
    &\stackrel{(c)}{=}\left(\sum_{c\in\S_i(\x)}\sum_{j=1}^m B_{ij} E_{j}^{(c)}(\x)\right) \Delta t + o(\Delta t),
\end{align}
where $(a)$ and $(b)$ follow from Lemma~\ref{lem:obvious} and $(c)$ follows from~\eqref{eq:basic_infection}. 

On the other hand, 
\begin{align}\label{eq:twelve}
    &\Pr(I_i(t+\Delta t) - I_i(t) = -1 \mid \X(t) =\x)\cr
    &=\Pr(\cup_{\ell= 0}^n (D_{\ell+1} (\A_i, t, \Delta t) \cap \mathscr I_{\ell}( \A_i, t,\Delta t ) )\mid \X(t)=\x    )\cr 
    &\stackrel{(a)}{=} \Pr( D_1(\A_i, t, \Delta t)\cap \mathscr I_0(\A_i,t,\Delta t) \mid \X(t)=\x )+o(\Delta t ) \cr
    &\stackrel{(b)}{=} \Pr(D_1(\A_i,t,\Delta t)\cap D_0([n]\setminus\A_i,t,\Delta t) \cap \mathscr I_0([n],t,\Delta t) \mid \X(t)=\x ) + o(\Delta t)\cr
    &=\sum_{c\in \I_i(\x)} \Pr(\X(t+\Delta t) = \x_{\downarrow c}\mid \X(t)=\x) + o(\Delta t)\cr
    &=\sum_{c\in\I_i(\x)} (\Q(\x,\x_{\downarrow c})\Delta t + o(\Delta t)) + o(\Delta t)\cr
    &=\sum_{c\in\I_i(\x)}\gamma_i\Delta t + o(\Delta t)\cr
    & = \gamma_i |I_i(\x)|\Delta t + o(\Delta t).
\end{align}
As a result of~\eqref{eq:eleven},~\eqref{eq:twelve}, and Lemma~\ref{lem:obvious}, we have
\begin{align*}
    \E[ I_i(t+\Delta t) - I_i(t) \mid \X(t)  ]=\left(\sum_{c\in\S_i(\x)}\sum_{j=1}^m B_{ij}E_j^{(c)}(\X(t) )  - \gamma_i|\I_i(\X(t))|\right)\Delta t+ o(\Delta t).
\end{align*}
By repeating the arguments used in the proof of~\eqref{item:init_1}, we can use the above to prove that
\begin{align*}
    \E[ I_i(t+\Delta t) - I_i(t) \mid \S(t), \I(t)  ]=\left(\sum_{j=1}^m B_{ij} \chi_{ij}(t) S_i(t) I_j(t)  - \gamma_iI_i( t )\right)\Delta t + o(\Delta t),
\end{align*}
which implies that
\begin{align*}
    \E\left[\frac{ I_i(t+\Delta t) - I_i(t) }{\Delta t} \right]=\sum_{j=1}^m B_{ij} \E[n\chi_{ij}(t) s_i(t) I_j(t)] - \gamma_i\E[I_i( t )] + \frac{o(\Delta t)}{\Delta t}.
\end{align*}
On dividing both sides by $n$ and then letting $\Delta t\rightarrow 0$, we obtain~\eqref{item:init_2}.

\subsubsection*{\textit{Proof of~\eqref{item:init_3}:}} 
Observe that when $\Delta_t S_i = -1$, we have $S_i^2(t+\Delta t) - S_i^2(t) = 1 - 2S_i(t)$. Therefore,
    \begin{align*}
        \E[S_i^2(t+\Delta t) - S_i^2(t)\mid\S(t), \I(t)] &= (1-2S_i(t))\cdot \Pr(\Delta_t S_i = -1 \mid \S(t), \I(t)) + o(\Delta t)  \cr
        &= \left(\sum_{j=1}^m B_{ij}\chi_{ij} S_i I_j\Delta t - 2\sum_{j=1}^m B_{ij}\chi_{ij} S_i^2 I_j\Delta t\right) + o(\Delta t).
    \end{align*}
    Taking expectations on both sides, dividing by $\Delta t$, letting $\Delta t\rightarrow 0$, and dividing both sides by $n^2$ yields \eqref{item:init_3}.
    
    \subsubsection*{\textit{Proof of \eqref{item:init_4}:}} Observe that if $\Delta_t I_i(t) = -1$, we have $I_i^2(t+\Delta t) - I_i^2(t) = 1 - 2I_i(t)$, and if $\Delta_t I_i(t) = 1$, we have $I_i^2(t+\Delta t) - I_i^2(t) = 1 + 2I_i(t)$.
    
    Thus, 
    \begin{align*}
        &\E[ I_i^2(t+\Delta t) - I_i^2(t)  \mid\S(t) , \I(t) ]\cr
        &= (1-2I_i(t))\cdot \Pr( \Delta_t I_i=-1\mid \S(t), \I(t))+(1+2I_i(t))\cdot\Pr(\Delta_t I_i=1\mid\S(t), \I(t) ) + o(\Delta t)
    \end{align*}
    On substituting the probabilities above with the expressions derived earlier, taking expectations on both sides, dividing by $n^2\Delta t$ and letting $\Delta t\rightarrow 0$, we obtain \eqref{item:init_4}.
\end{proof}

\begin{lemma} \label{lem:difficult}
Let $a\in\A_i$ and $b\in\A_j$ be any two nodes, let $t\in[0,\infty)$ be any time, and let $T\in [0,t]$ be the random variable such that $t-T$ is the time at which $1_{(a,b)}$ is updated for the last time during the interval $[0,t]$. Then the random variables $T$ and $1_{(a,b)}(t)$ are independent.
\end{lemma}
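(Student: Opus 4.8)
The plan is to isolate the edge-update sub-process of the single pair $(a,b)$ and use the fact, stated in Section~\ref{sec:formulation}, that its transition rates do not depend on the rest of the network state: the ``decision'' (update) times $\{T_\ell^{(a,b)}\}_{\ell\ge 1}$ of $(a,b)$ form a Poisson process of rate $\lambda$, and at the $\ell$-th such time the edge state is re-drawn as an independent $\mathrm{Bernoulli}(\rho_{ij}/n)$ variable $V_\ell$, the whole sequence $\{V_\ell\}_{\ell\ge 1}$ being i.i.d.\ and independent of the point process $\{T_\ell^{(a,b)}\}_{\ell\ge 1}$. Setting $V_0:=1_{(a,b)}(0)$ (which, consistently with the model's initialization and with Proposition~\ref{prop:bounds} evaluated at $t=0$, is also $\mathrm{Bernoulli}(\rho_{ij}/n)$ and independent of the point process), the family $\{V_\ell\}_{\ell\ge 0}$ is i.i.d.\ and independent of the update times. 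Since the $(a,b)$-edge process is autonomous, both $T$ and $1_{(a,b)}(t)$ are measurable with respect to $\sigma\big(\{T_\ell^{(a,b)}\}_{\ell\ge1},\{V_\ell\}_{\ell\ge0}\big)$, and we never need to involve the disease states or other edges.

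Next I would record two elementary identities. Let $N:=\#\{\ell\ge 1: T_\ell^{(a,b)}\le t\}$ and, on $\{N\ge 1\}$, let $U_N$ be the last update time of $(a,b)$ in $[0,t]$, with the convention $U_0:=0$ on $\{N=0\}$. By the definition of $T$ we have $t-T=U_N$, so $T$ is a measurable function of the point process alone; and since no update of $(a,b)$ occurs in $(U_N,t]$, the edge state is frozen there, whence $1_{(a,b)}(t)=V_N$.

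The crux is a conditioning argument showing $V_N\perp \sigma\big(\{T_\ell^{(a,b)}\}_{\ell\ge1}\big)$. Conditionally on the entire point process, the index $N$ is a fixed number $n_0$, and $V_{n_0}$ is still $\mathrm{Bernoulli}(\rho_{ij}/n)$ because $V_{n_0}$ is independent of the point process; hence the conditional law of $1_{(a,b)}(t)=V_N$ given $\sigma\big(\{T_\ell^{(a,b)}\}\big)$ is the non-random $\mathrm{Bernoulli}(\rho_{ij}/n)$ distribution, which forces $1_{(a,b)}(t)$ to be independent of $\sigma\big(\{T_\ell^{(a,b)}\}\big)$, and in particular of the $\sigma\big(\{T_\ell^{(a,b)}\}\big)$-measurable variable $T$. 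Concretely, for any Borel $A\subseteq[0,t]$ and $v\in\{0,1\}$ one expands $\Pr(T\in A,\,1_{(a,b)}(t)=v)=\sum_{n_0\ge 0}\Pr(N=n_0,\ t-U_{n_0}\in A,\ V_{n_0}=v)$, uses $(N,U_{n_0})\perp V_{n_0}$ to factor each term as $\Pr(N=n_0,\ t-U_{n_0}\in A)\cdot p_v$ with $p_1:=\rho_{ij}/n$ and $p_0:=1-\rho_{ij}/n$, sums to get $\Pr(T\in A)\cdot p_v$, and then takes $A=[0,t]$ to identify $p_v=\Pr(1_{(a,b)}(t)=v)$; this is exactly the product formula defining independence.

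The only point that requires care — the main obstacle — is that $N$ is itself a random index correlated with $T$, so $V_N$ is a \emph{randomly selected} entry of the i.i.d.\ sequence rather than a fixed one; the resolution is that the selecting index $N$ is a function of the point process only, which is independent of the $V_\ell$'s, so conditioning on the point process turns the selection into a deterministic one without changing the marginal law of the selected variable. I would also take care to make explicit the model's (informally stated) assumptions that the decision times are independent of the re-initialized edge states and that the re-initializations are i.i.d., and to handle the boundary case $N=0$ via the convention $V_0:=1_{(a,b)}(0)$ introduced above.
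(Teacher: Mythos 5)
Your proof is correct and rests on the same two facts the paper's proof uses: the $(a,b)$ edge-update times form a rate-$\lambda$ Poisson process independent of the rest of the network state (since $\Q(\x,\x_{\existence(a,b)})+\Q(\x,\x_{\nonexistence(a,b)})=\lambda$ for every $\x$), and the re-drawn edge states are i.i.d.\ $\mathrm{Bernoulli}(\rho_{ij}/n)$ marks independent of those times. The paper reaches the conclusion by computing $\Pr((a,b)\in E(t)\mid T=\tau)=\rho_{ij}/n$ directly via Bayes' rule rather than by your condition-on-the-point-process random-index argument, but the two arguments are the same in substance; your explicit handling of the boundary case $T=t$ (which requires $1_{(a,b)}(0)\sim\mathrm{Bernoulli}(\rho_{ij}/n)$ independent of the update times) makes precise an assumption the paper leaves implicit in its step $(c)$.
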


\begin{proof}
For $\tau\in [0,t]$, let $N_\tau$ denote the number of times $1_{(a,b)}$ is updated in the open interval $(t-\tau,t)$, and let $U_\tau$ denote the zero-probability event that $1_{(a,b)}$ is updated at time $t-\tau$. Note that $\Q(\x,\x_{\existence(a,b)})+\Q(\x,\x_{\nonexistence(a,b)})=\lambda$ for all $\x\in\mathbb S$, which means that the  rate at which $1_{(a,b)}$ is updated is time-invariant and independent of the network state. This means that the sequence of times at which  $1_{(a,b)}$ is updated is a Poisson process, which further means that the updates of $1_{(a,b)}$ occurring in disjoint time intervals are independent. It follows that $N_\tau$ is a Poisson random variable (with mean $\lambda\tau$) that is independent of ${U_\tau}$ and $1_{(a,b)}(t-\tau)$. As a result,
\begin{align*}
    \Pr((a,b)\in E(t)\mid T=\tau)&\stackrel{(a)}{=}\Pr((a,b)\in E(t-\tau)\mid U_\tau, N_\tau=0)\cr
    &\stackrel{(b)}{=}\frac{\Pr(N_\tau=0\mid (a,b)\in E(t-\tau), U_\tau)}{\Pr(N_\tau=0\mid U_\tau)}\cdot\Pr((a,b)\in E(t-\tau)\mid U_\tau)\cr
    &=\frac{\Pr(N_\tau=0\mid 1_{(a,b)}(t-\tau)=1, U_\tau)}{\Pr(N_\tau=0\mid U_\tau)}\cdot\Pr((a,b)\in E(t-\tau)\mid U_\tau)\cr
    &=\frac{\Pr(N_\tau=0)}{\Pr(N_\tau=0)}\cdot\Pr((a,b)\in E(t-\tau)\mid U_\tau)\cr
    &\stackrel{(c)}{=}\frac{\rho_{ij}}{n},
\end{align*}
where (a) follows from the definition of $T$, $(b)$ follows from Bayes' rule, and $(c)$ follows from the model definition (Section~\ref{sec:formulation}). Thus, $\Pr(1_{(a,b)}(t)=1\mid T=\tau)$ and $\Pr(1_{(a,b)}(t)=0\mid T=\tau)$ do not depend on $\tau$, which means that $T$ and $1_{(a,b)}(t)$ are independent.
\end{proof}

The proof of Proposition~\ref{prop:bounds} is based on the concepts of transition sequences and agnostic transition sequences, which we define below.
\begin{definition} [Transition Sequence] Consider any time $t\geq 0$, integer $r\in\N_0$, tuples $\x^{(1)},\x^{(2)},\ldots, \x^{(r)}\in\mathbb S$, and times $0\leq t_1<t_2<\cdots<t_r\leq t$. Let $F=\{\x^{(0)}\stackrel{t_1}{\rightarrow}\x^{(1)}\stackrel{t_2}{\rightarrow}\cdots \stackrel{t_r}{\rightarrow} \x^{(r)}\stackrel{t}{\rightarrow}\x^{(r)}\}$ denote the event that the embedded jump chain $\{\X(J_\ell):\ell\in\N_0\}$ satisfies $\X(J_\ell)=\x^{(\ell)}$ and $J_\ell = t_\ell$ for all $\ell\in [r]$, and $J_{r+1}>t$. Then $F$ is said to be a \textit{transition sequence} for the time interval $[0,t]$. 
\end{definition} 

Note that if $F$ is a transition sequence for $[0,t]$, then for every tuple $\x\in\mathbb S$, we either have $F\subset \{\X(t)=\x\}$ or $F\subset \{\X(t)\neq\x\}$.

\begin{definition} [$(a,b)$-Complement] Let $\x\in\mathbb S$. Then the $(a,b)$\textit{-complement} of $\x$, denoted by $\x\abcomp$, is defined by
\begin{align*}
    (\x\abcomp)_\ell=
    \begin{cases}
        x_\ell\quad &\text{if }\ell\in [2n^2-n]\setminus\{\langle a,b\rangle\},\\
        1-x_\ell\quad&\text{if }\ell =\langle a,b\rangle.
    \end{cases}
\end{align*}
\end{definition}

\begin{definition} [$(a,b)$-Agnostic Transition Sequence] Let $a,b\in[n]$, and let $F=\{\x^{(0)}\stackrel{t_1}{\rightarrow}\x^{(1)}\stackrel{t_2}{\rightarrow}\cdots \stackrel{t_r}{\rightarrow} \x^{(r)}\stackrel{t}{\rightarrow}\x^{(r)}\}$ be a transition sequence for a time interval $[0,t]$. Further, define 
$$
    \Lambda_{(a,b)}(F)=
    \begin{cases}
        \max\left\{\ell\in [r]: \x^{(\ell)}\in\{\x^{(\ell-1)}_{\existence(a,b)},\x^{(\ell-1)}_{\nonexistence(a,b)}\}  \right\} \quad &\text{if } \left\{\ell\in [r]: \x^{(\ell)}\in\{\x^{(\ell-1)}_{\existence(a,b)},\x^{(\ell-1)}_{\nonexistence(a,b)}\}  \right\}\neq\emptyset\\
        0 \quad &\text{otherwise.}
    \end{cases}
$$
Then the $(a,b)$\textit{-agnostic transition sequence for} $F$ is the event $F\abagn=F\cup F\abcomp$, where $F\abcomp$, defined by
\begin{align*}
     F\abcomp:=&\Big\{\x^{(0)}\stackrel{t_1}{\rightarrow}\cdots\stackrel{t_{\Lambda_{(a,b)}(F)-1  } }{\rightarrow} \x^{({\Lambda_{(a,b)}(F)-1  })}\stackrel{t_{\Lambda_{(a,b)}(F) } }{\rightarrow}\x^{({\Lambda_{(a,b)}(F) })}\abcomp\\
    &\quad\stackrel{t_{\Lambda_{(a,b)}(F)+1 } }{\rightarrow} \x^{({\Lambda_{(a,b)}(F) +1 })}\abcomp\stackrel{t_{\Lambda_{(a,b)}(F)+2} }{\rightarrow}\cdots\stackrel{t_r}{\rightarrow} \x^{(r)}\abcomp\stackrel{t}{\rightarrow}\x^{(r)}\abcomp \Big\},
\end{align*}
is called the $(a,b)$\textit{-complement} of $F$.
\end{definition}

Given that $F$ occurs, $t_{\Lambda_{(a,b)}(F)}$ denotes the time at which the edge state of $(a,b)$ is updated for the last time during the time interval $[0,t]$ (note that, if the edge state of $(a,b)$ is not updated during $[0,t]$, then $t_{\Lambda_{(a,b)}(F)}=t_0:=0$). Therefore, the only difference between $F$ and $F\abcomp$ is that the last edge state of $(a,b)$ to be realized during the interval $[0,t]$ is different for $F$ and $F_{(a,b)}$. Stated differently, if $F\subset \{(a,b)\in E(t)\}$, then  $F\abcomp\subset\{(a,b)\notin E(t)\}$, and vice-versa. As a result, the event $F\abagn$ is $(a,b)$-agnostic in that the occurrence of this event does not provide any information about the edge state of $(a,b)$ at time $t$. 

Note that if $F$ is a transition sequence, then $F$, $F\abcomp$, and $F\abagn$ are all zero-probability events. We now approximate these events with the help of suitable positive-probability events.

\begin{definition} [$\delta$-Approximation] Let $F=\{\x^{(0)}\stackrel{t_1}{\rightarrow}\x^{(1)}\stackrel{t_2}{\rightarrow}\cdots \stackrel{t_r}{\rightarrow} \x^{(r)}\stackrel{t}{\rightarrow}\x^{(r)}\}$ be a transition sequence. Then, for a given $\delta >0$, the $\delta$\textit{-approximation} of $F$ is the event $F^\delta:=\{\X(0)=\x^{(0)}, \ldots, \X(J_r)=\x^{(r)}, \Delta_1 J\in [\Delta_1 t,\Delta_1 t+\delta), \ldots, \Delta _r J\in [\Delta_r t,\Delta_r t+\delta), \Delta_{r+1} J >t-t_r\}$, where $\Delta_\ell J:= J_\ell - J_{\ell-1}$, $\Delta_\ell t:=t_\ell - t_{\ell-1}$, and $t_0:=0$. Also, the $\delta$-approximation of $F\abagn$ is the event $F\abagn^\delta:= F^\delta \cup F\abcomp^\delta$ (where $F^\delta\abcomp$ is the $\delta$-approximation of $F\abcomp$).
\end{definition}

The following lemma evaluates the probability of occurrence of a $\delta$-approximation event.
\begin{lemma} \label{lem:delta_approx}
Let $F=\{\x^{(0)}\stackrel{t_1}{\rightarrow}\x^{(1)}\stackrel{t_2}{\rightarrow}\cdots \stackrel{t_r}{\rightarrow} \x^{(r)}\stackrel{t}{\rightarrow}\x^{(r)}\}$ be a transition sequence. Then for all sufficiently small $\delta>0$, the ratio $\frac{\Pr(F^\delta)}{\Pr(\X(0)=\x^{(0)} )} $ equals
$$
     e^{-q_r (t-t_r)}\prod_{\ell=1}^{r} q_{\ell-1,\ell} (e^{-q_{\ell-1}( t_{\ell} - t_{\ell-1})}\delta + o(\delta)),
$$
where $t_0:=0$, $q_{\ell}:=-\Q(\x^{(\ell)}, \x^{(\ell)})$, and $q_{\ell, \ell+1}:= \Q(\x^{(\ell)}, \x^{(\ell+1)})$ for each $\ell\in [r]$.
\end{lemma}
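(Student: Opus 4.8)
The plan is to read off $\Pr(F^\delta)$ from the standard description of a continuous-time Markov jump process in terms of its embedded jump chain together with its holding times --- the same description already invoked (without proof) in the proof of Lemma~\ref{lem:obvious}. Concretely, conditional on $\{\X(0)=\x^{(0)}\}$: (a) the embedded jump chain $\{\X(J_\ell)\}_{\ell\in\N_0}$ is a discrete-time Markov chain whose one-step transition probability from a state $\x$ to a distinct state $\y$ is $\Q(\x,\y)/|\Q(\x,\x)|$; and (b) conditional on the trajectory of the embedded jump chain, the holding times $\Delta_1 J,\Delta_2 J,\dots$ are mutually independent, with $\Delta_\ell J$ exponentially distributed with rate $-\Q(\X(J_{\ell-1}),\X(J_{\ell-1}))$, i.e.\ with rate $q_{\ell-1}$ while the chain sits in $\x^{(\ell-1)}$. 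I would first state (a)--(b) as standard facts and observe that, since $F$ is a transition sequence, each $\x^{(\ell)}$ potentially succeeds $\x^{(\ell-1)}$, so $q_{\ell-1,\ell}>0$ and hence $q_{\ell-1}\ge q_{\ell-1,\ell}>0$ for every $\ell\in[r]$; this makes the quotients appearing below well defined, and the case $q_r=0$ (when $\x^{(r)}$ is absorbing) is harmless.

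Next I would factor the event $F^\delta$, conditional on $\{\X(0)=\x^{(0)}\}$, into two independent pieces. The first is the event that the embedded jump chain follows the path $\x^{(0)}\to\x^{(1)}\to\cdots\to\x^{(r)}$ in its first $r$ steps; by (a) its probability is $\prod_{\ell=1}^r q_{\ell-1,\ell}/q_{\ell-1}$. The second is, given that path, the event $\{\Delta_\ell J\in[\Delta_\ell t,\Delta_\ell t+\delta)\text{ for }\ell=1,\dots,r\}\cap\{\Delta_{r+1}J>t-t_r\}$, where $\Delta_\ell t:=t_\ell-t_{\ell-1}$ and $t_0:=0$; by (b) the holding times involved are independent, $\Delta_\ell J$ has rate $q_{\ell-1}$ for $\ell\le r$ and $\Delta_{r+1}J$ has rate $q_r$, and since $t-t_r$ is a constant (not a function of the earlier holding times) this probability is exactly $\big(\prod_{\ell=1}^r(e^{-q_{\ell-1}\Delta_\ell t}-e^{-q_{\ell-1}(\Delta_\ell t+\delta)})\big)\,e^{-q_r(t-t_r)}$. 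Multiplying the two contributions and using $e^{-q_{\ell-1}\Delta_\ell t}-e^{-q_{\ell-1}(\Delta_\ell t+\delta)}=e^{-q_{\ell-1}\Delta_\ell t}(1-e^{-q_{\ell-1}\delta})$ together with $1-e^{-q_{\ell-1}\delta}=q_{\ell-1}\delta+o(\delta)$, the factor $1/q_{\ell-1}$ cancels and I obtain $\frac{\Pr(F^\delta)}{\Pr(\X(0)=\x^{(0)})}=e^{-q_r(t-t_r)}\prod_{\ell=1}^r q_{\ell-1,\ell}\big(e^{-q_{\ell-1}\Delta_\ell t}\delta+o(\delta)\big)$, which is the claim.

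I do not expect a genuine obstacle: this is essentially the textbook path-probability formula for a Markov jump process, and the only points needing care are bookkeeping ones. The first is to pair each holding time $\Delta_\ell J$ with the exit rate $q_{\ell-1}$ of the state $\x^{(\ell-1)}$ occupied \emph{before} the $\ell$-th jump (rather than the state entered at that jump), and to retain the $1/q_{\ell-1}$ from the jump-chain transition probability so that it telescopes against $1-e^{-q_{\ell-1}\delta}=q_{\ell-1}\delta+o(\delta)$ and reproduces the stated product. The second is to justify the conditional independence in (b); here I would simply cite the same fact already used in the proof of Lemma~\ref{lem:obvious}. I would also remark that the displayed identity in fact holds for \emph{every} $\delta>0$ (the $o(\delta)$ terms being literally $\tfrac{1-e^{-q_{\ell-1}\delta}}{q_{\ell-1}}-\delta$), so in particular for the ``sufficiently small'' $\delta$ of the statement.
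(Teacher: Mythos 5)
Your proposal is correct and follows essentially the same route as the paper: both arguments reduce $\Pr(F^\delta)$ to the standard jump-chain/holding-time description of the process, pair the holding time $\Delta_\ell J$ with the exit rate $q_{\ell-1}$ of the state occupied before the $\ell$-th jump, and cancel the $1/q_{\ell-1}$ from the embedded-chain transition probability against $1-e^{-q_{\ell-1}\delta}=q_{\ell-1}\delta+o(\delta)$. The only cosmetic difference is that the paper factors the probability step by step via the strong Markov property at the stopping times $J_\ell$, whereas you condition on the full embedded-chain path first and then use the conditional independence of the holding times; these rest on the same two facts and yield the identical computation.
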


\begin{proof}
Observe that 
\begin{align*}
    \frac{\Pr(F^\delta)}{\Pr(\X(0)=\x^{(0)})}&=\Pr(\cap_{\ell\in[r] } \{\X(J_\ell)=\x^{(\ell)},\Delta_\ell J \in [\Delta_\ell t, \Delta_\ell t +\delta) \}\mid \X(0)=\x^{(0)})\cr
    &\quad\times\Pr(\Delta_{r+1} J>t-t_r\mid \cap_{\ell\in[r] } \{\X(J_\ell)=\x^{(\ell)},\Delta_\ell J \in [\Delta_\ell t, \Delta_\ell t +\delta) \})\cr
    &\stackrel{(a)}{=}\prod_{\ell=1}^r \Pr(\Delta_\ell J\in [\Delta_\ell t,\Delta_\ell t+\delta),  \X(J_{\ell})=\x^{(\ell)}\mid \X(J_{\ell-1})=\x^{(\ell-1)})\cr
    &\quad\times \Pr(\Delta_{r+1} J>t-t_r\mid \X(J_r)=\x^{(r)})\cr
    &\stackrel{(b)}{=}\prod_{\ell=1}^r \Pr(\Delta_1 J\in [\Delta_\ell t,\Delta_\ell t+\delta),  \X(J_1)=\x^{(\ell)}\mid \X(0)=\x^{(\ell-1)})\cr
    &\quad\times \Pr(\Delta_{1} J>t-t_r\mid \X(0)=\x^{(r)})\cr
    &\stackrel{(c)}{=}\prod_{\ell=1}^r \Big(\Pr(\Delta_1 J\in [\Delta_\ell t, \Delta_\ell t+\delta)\mid \X(0)=\x^{(\ell-1)} )\cdot \Pr(\X(J_1)=\x^{(\ell)}\mid \X(0)=x^{(\ell-1)})\Big)\cr
    &\quad\times \Pr(\Delta_{1} J>t-t_r\mid \X(0)=\x^{(r)})\cr
    &\stackrel{(d)}{=}\prod_{\ell=1}^r \left(  \left(q_{\ell-1} e^{-q_{\ell-1}\Delta_\ell t}\delta + o(\delta)\right) \frac{q_{\ell-1,\ell} }{q_{\ell-1}} \right)\times e^{-q_r(t-t_r)}\cr
    &= e^{-q_r(t-t_r)}\prod_{\ell=1}^r q_{\ell-1,\ell} \left( e^{-q_{\ell-1}(t_\ell - t_{\ell-1}) }\delta+o(\delta)\right),
\end{align*}
where $(a)$ follows from the strong Markov property and the fact that jump times are stopping times, $(b)$ follows from Proposition 3.2 of~\cite{lalley}, $(c)$ follows from the fact that $\X(J_1)$ and $\Delta_1 J$ (which equals $J_1$) are conditionally independent given $\X(0)$ (see Proposition 3.1 of~\cite{lalley}), and $(d)$ follows from the following two facts:
\begin{enumerate}
    \item $\Delta_1 J$ is conditionally exponentially distributed with mean $q_{\ell-1}^{-1}$ given that $\X(0)=\x^{(\ell-1)}$.
    \item For the embedded jump chain, the probability of transitioning from $\x\in\mathbb S$ to $\y\in\mathbb S$ is $\frac{\Q(\x,\y)}{|\Q(\x,\x)|}$.
\end{enumerate}

\end{proof}

For the rest of the appendix, let $a\in\A_i$ and $b\in\A_j$ be any two nodes, let $t\in[0,\infty)$ be any time instant, let $T\in [0,t]$ be the random variable such that $t-T$ is the time at which $1_{(a,b)}$ is updated for the last time during the interval $[0,t)$, and let $K:=t-\min\{t,\inf\{\tau:b\in\I(\tau)\}\}$, where $\inf\{\tau: b\in \I(\tau) \}$ is the time at which $b$ gets infected.  

\begin{lemma}\label{lem:pdf_of_T}
The PDF of $T$ has $[0,t]$ as its support and is given by
$$
    f_T(\tau) = \lambda e^{-\lambda \tau}+e^{-\lambda t}\delta_D(\tau-t),
$$
where $\delta_D(\cdot)$ is the Dirac-delta function.
\end{lemma}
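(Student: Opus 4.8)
The plan is to recognize $T$ as the backward recurrence time (the ``age'') at time $t$ of the point process of edge-update events of the pair $(a,b)$, and then to exploit the fact that this point process is a homogeneous Poisson process of intensity $\lambda$.

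First I would establish the Poisson claim. From the definition of the generator $\Q$, the only transitions out of a state $\x\in\mathbb S$ that re-initialize the $(a,b)$-edge coordinate are $\x\to\x_{\existence (a,b)}$ and $\x\to\x_{\nonexistence (a,b)}$, whose rates sum to $\lambda\frac{\rho_{ij}}{n}+\lambda\bigl(1-\frac{\rho_{ij}}{n}\bigr)=\lambda$, independently of $\x$. Hence the total rate at which $1_{(a,b)}$ is updated equals the constant $\lambda$ at every time and in every state; by the standard competing-exponential-clocks structure of a continuous-time Markov chain, the random set of update times of $1_{(a,b)}$ is a homogeneous Poisson process of rate $\lambda$ on $[0,\infty)$. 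This is precisely the fact already used in the proof of Lemma~\ref{lem:difficult}.

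Next I would read off the law of $T$ directly from its definition. By construction $t-T$ is the largest update time of $1_{(a,b)}$ lying in $[0,t)$, with the convention $t-T=0$ (equivalently $T=t$) when there is no such update; therefore, for every $\tau\in[0,t]$, the event $\{T\ge\tau\}$ coincides with the event that the Poisson process has no point in the interval $(t-\tau,t)$, which has probability $e^{-\lambda\tau}$ by the previous paragraph. Consequently $\Pr(T<0)=0$, $\Pr(T<\tau)=1-e^{-\lambda\tau}$ for $\tau\in[0,t)$, and $\Pr(T\le t)=1$ since $T\le t$ almost surely. Thus the cumulative distribution function of $T$ is absolutely continuous on $[0,t)$ with derivative $\lambda e^{-\lambda\tau}$ and has a single jump at $\tau=t$ of height $1-(1-e^{-\lambda t})=e^{-\lambda t}$; writing this jump as a Dirac mass gives $f_T(\tau)=\lambda e^{-\lambda\tau}+e^{-\lambda t}\delta_D(\tau-t)$. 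Since the continuous part is strictly positive on all of $[0,t)$ and the atom sits at $t$, the support is exactly $[0,t]$.

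I do not expect a genuine obstacle here; the only points requiring care are the bookkeeping of the convention $T=t$ on the (probability-$e^{-\lambda t}$) event that $(a,b)$ is never updated before time $t$, and the observation that $T$ depends on the trajectory of the Markov chain only through the update times of $1_{(a,b)}$ — which is immediate from the definition, since $t-T$ is built solely from those times — so that the Poisson structure alone pins down its distribution.
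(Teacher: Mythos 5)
Your proposal is correct and follows essentially the same route as the paper: both arguments observe that $\Q(\x,\x_{\existence(a,b)})+\Q(\x,\x_{\nonexistence(a,b)})=\lambda$ for every state, conclude that the update times of $1_{(a,b)}$ occur at constant rate $\lambda$ (a Poisson process), compute $\Pr(T\geq\tau)=e^{-\lambda\tau}$ as the probability of no update in an interval of length $\tau$, identify the atom of mass $e^{-\lambda t}$ at $\tau=t$, and differentiate the resulting CDF. Your explicit framing of $T$ as the backward recurrence time of the Poisson process is just a cleaner name for the same computation.
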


\begin{proof}
The definition of $T$ implies that the support of its PDF is $[0,t]$. To derive the required closed-form expression for this PDF, recall that $\Q(\x,\x_{\existence(a,b)})+\Q(\x,\x_{\nonexistence(a,b)})=\lambda$ for all $\x\in\mathbb S$, which means that the edge state of $(a,b)$ is updated at a constant rate of $\lambda$ at all times. Therefore, for any $\tau\in[0,t)$, the quantity $\Pr(T>\tau)$ (the probability that $1_{(a,b)}$ is not updated during $[t-\tau,t]$) is given by $e^{-\lambda\tau}$. However, $\Pr(T>t)=0$, implying that $\Pr(T=t)=\Pr(T\geq t)=\lim_{\tau\rightarrow t^-}\Pr(T>\tau)=e^{-\lambda t}$. Hence, the CDF of $T$ is  $F(\tau)=1-e^{-\lambda\tau}$ for all $\tau\in [0,t)$, and $F(t)=1$. Taking the first derivative of this CDF now yields the required expression for $f_T$.
\end{proof}

To prove the next lemma, we need the notion of agnostic superstates, which is defined below.

\begin{definition} [$(a,b)$-Agnostic Superstate] Given a node pair $(a,b)\in [n]\times [n]$, a collection of states $\mathbb X\subset \mathbb S$ is an $(a,b)$\emph{-agnostic superstate} if $\mathbb X$ can be expressed as $\mathbb X=\left\{\x, \x\abcomp, \y, \y\abcomp\right\}$ for a pair of states $\x,\y\in\mathbb S$ satisfying $y_{n^2+\langle a,b\rangle}=1-x_{n^2 + \langle a,b \rangle }$ and $x_\ell = y_\ell$ for all $\ell\in [2n^2-n]\setminus\{n^2 + \langle a,b\rangle \}$.
\end{definition}

Note that an $(a,b)$-agnostic superstate specifies the disease states of all the nodes and the edge states of all the node pairs except $(a,b)$.

\begin{definition} [$(a,b)$-Agnostic Jump Times] Given $(a,b)\in [n]\times [n]$, the $(a,b)$\emph{-agnostic jump times of the chain $\{\X(\tau):\tau\geq 0\}$}, denoted by $\{L_k\}_{k=0}^\infty$, are defined by $L_0:=0$ and $L_k:=\inf\{J_\ell: \ell\in \N, J_\ell> L_{k-1}, \X(J_\ell)\notin\{ (\X(J_{\ell-1}))_{\existence(a,b)}, (\X(J_{\ell-1}))_{\nonexistence(a,b)}\} \}$ for all $k\in\N$.
\end{definition}

Note that $\{L_k\}_{k=0}^\infty\subset\{J_\ell\}_{\ell=0}^\infty$ and that the $(a,b)$-agnostic jump times of $\{\X(\tau)\}$ are the jump times of the chain at which the edge state of $(a,b)$ is not updated.

\begin{lemma}\label{lem:simple_but_difficult} $K$ is independent of $(T,1_{(a,b)}(t))$.
\end{lemma}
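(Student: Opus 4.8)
The plan is to reduce the statement to a single structural fact about the \emph{direction} of the arc $(a,b)$. First, note that $K$ depends on the whole trajectory only through the first infection time $\sigma_b:=\inf\{\tau\ge 0: b\in\I(\tau)\}$ of $b$, since $K=t-\min\{t,\sigma_b\}$; and, exactly as in the proof of Lemma~\ref{lem:difficult}, the pair $(T,1_{(a,b)}(t))$ is a function of the edge-update process of the single ordered pair $(a,b)$ alone: $t-T$ is the last ring in $[0,t)$ of the rate-$\lambda$ clock governing updates of $1_{(a,b)}$, and $1_{(a,b)}(t)$ is the Bernoulli$(\rho_{ij}/n)$ outcome drawn at that ring (or $1_{(a,b)}(0)$ if there is none). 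So it suffices to prove that $\sigma_b$ is independent of this edge-$(a,b)$ process.

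To do this I would realize the chain through the standard Poisson (graphical) construction: mutually independent Poisson clocks — a rate-$\lambda$ edge-update clock for each ordered pair, a rate-$B_{k_c k_d}$ transmission clock for each ordered pair $(c,d)$, a rate-$\gamma_{k_c}$ recovery clock for each node — together with independent Bernoulli coins used to re-initialize edge states and an independent initial condition; a ring of the transmission clock of $(c,d)$ infects $c$ precisely when $1_{(c,d)}=1$, $d\in\I$ and $c\in\S$ (the arc $(c,d)$ carries the pathogen from $d$ to $c$). Let $\mathcal{G}_{ab}$ be generated by the edge-update clock of $(a,b)$, its re-initialization coins, and $1_{(a,b)}(0)$, and let $\mathcal{G}_{-ab}$ be generated by everything else; by the independence built into the model these are independent, and $(T,1_{(a,b)}(t))$ is $\mathcal{G}_{ab}$-measurable. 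The remaining — and only substantive — step is to show that $\sigma_b$ is $\mathcal{G}_{-ab}$-measurable. The point is that the coordinate $1_{(a,b)}$ enters the dynamics in exactly one place: the transmission clock of $(a,b)$, which (since the arc $(a,b)$ points from $b$ to $a$) can infect only $a$, and only while $b$ is already infected. Hence, for as long as $b$ is susceptible, no transition of the chain depends on $1_{(a,b)}$. Coupling the true construction with the one in which $1_{(a,b)}$ is frozen at $0$ for all time (same clocks and coins otherwise), the two trajectories then agree on every node's disease state and on every edge $\neq(a,b)$ up to the first time $b$ is infected; so that time is the same in both, and in the frozen construction it is patently $\mathcal{G}_{-ab}$-measurable. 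Independence of $K$ and $(T,1_{(a,b)}(t))$ then follows from $\mathcal{G}_{ab}\perp\mathcal{G}_{-ab}$.

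The hard part — really the only place something could fail — is the apparent feedback ``$1_{(a,b)}=1$ helps infect $a$, then $a$ re-transmits and eventually reaches $b$'', which would destroy the measurability claim. It does not occur, because any transmission along the arc $(a,b)$ needs $b\in\I$ at that instant, hence happens strictly after $\sigma_b$, so everything influenced by $1_{(a,b)}$ is causally downstream of $\sigma_b$ and cannot alter it. Phrased in the language of the preceding definitions: before $\sigma_b$ the $(a,b)$-agnostic superstate process is itself a Markov chain whose rates do not involve $1_{(a,b)}$, it evolves independently of the autonomous update process of $1_{(a,b)}$, and $\sigma_b$ is a stopping time of that agnostic process — the same mechanism already exploited for $T$ in Lemma~\ref{lem:difficult}, now applied to $K$. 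I expect the bulk of the writeup to be the careful coupling and measurability bookkeeping; the probabilistic content is entirely the directionality of the arc $(a,b)$.
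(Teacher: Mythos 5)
Your proposal is correct, and it reaches the conclusion by a genuinely different route from the paper. The paper never introduces a graphical (Poisson) construction: it works directly with the generator $\Q$, partitions the event $\{\tilde K=L_N\}$ over sequences of $(a,b)$-agnostic superstates visited at the $(a,b)$-agnostic jump times, and verifies by hand that the holding times and jump targets of that superstate process have conditional laws that do not depend on the trajectory $\{1_{(a,b)}(\tau'):0\leq\tau'\leq t\}$ -- essentially computing $\Pr(F\cap\{L_N\geq\kappa\}\mid\{1_{(a,b)}(\tau')\})=\Pr(F\cap\{L_N\geq\kappa\})$ event by event. You instead realize the chain once and for all via independent clocks and coins, place $(T,1_{(a,b)}(t))$ in $\mathcal G_{ab}$ and $\sigma_b$ in $\mathcal G_{-ab}$ via the frozen-edge coupling, and read off independence from the product structure. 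Both arguments pivot on the same structural fact -- the arc $(a,b)$ carries pathogens only from $b$ to $a$ and only while $b$ is infected, so nothing upstream of $\sigma_b$ sees $1_{(a,b)}$ -- but your version is shorter, more conceptual, and (as the paper's Remark~\ref{rem:tilde_K} needs) immediately gives the stronger statement that the whole pre-$\sigma_b$ agnostic trajectory, hence $(\XX(\tilde K),\tilde K)$, is independent of $\{1_{(a,b)}(\tau'):0\leq\tau'\leq t\}$, since all of it is $\mathcal G_{-ab}$-measurable. The one step you should not leave implicit in a full writeup is that the graphical construction really does have generator $\Q$ (the per-arc transmission clocks of rate $B_{ik}$, thinned by $1_{(c,d)}\cdot 1_{d\in\I}\cdot 1_{c\in\S}$, sum to the rate $\sum_k B_{ik}E_k^{(c)}(\x)$, and the edge-update clock of each pair rings at total rate $\lambda$ independently of the state); since independence is a distributional property, establishing it for this version suffices. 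The paper's approach buys self-containedness within the generator formalism at the cost of several pages of conditional-probability bookkeeping; yours buys brevity at the cost of invoking (and justifying) the standard coupling machinery.
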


\begin{proof}
Note that $K$ is a function of $\tilde K:=\inf\{\tau\geq 0:b\in\I(\tau)\}$, the time at which $b$ gets infected. Hence, it suffices to prove that $\tilde K$ is independent of $(T,1_{(a,b)}(t))$.

Consider now any $\kappa\geq 0$ and note that $\{\tilde K\geq \kappa\}=\cup_{N=0}^\infty \left(\{\tilde K=L_N\}\cap\{L_N\geq \kappa\}\right)$. To examine the probability of $\{\tilde K=L_N\}$, we let $\mathcal F_N(\kappa)$ denote the set of all the events of the form $F=\{\X(0)\in \XX^{(0)}, \X(L_1)\in \XX^{(1)},\ldots, \X(L_N)\in\XX^{(N)}\}$ (where $\XX^{(0)},\ldots, \XX^{(N)}$ are $(a,b)$-agnostic superstates satisfying $\XX^{(k)}\neq \XX^{(k-1)}$ for all $k\in [N]$) that satisfy $F\subset \{\tilde K=L_N\}$, and we observe that $\{\tilde K=L_N\}=\cup_{F\in\F_{N}(\kappa)}F$.

We now examine $\Pr(F)$ for an arbitrary $F=\{\X(0)\in \XX^{(0)}, \X(L_1)\in \XX^{(1)},\ldots, \X(L_N)\in\XX^{(N)}\}\in\F_N(\kappa)$. Pick any $k\in[N]$ and $\x\in\XX^{(k-1)}$. Note that $F\subset\{\tilde K=L_N\}$ implies that $b\in\S(\x)$. In view of our definition of $\Q$, this means that $\Q(\x,\x)=-\sum_{\mathbf z\in\mathbb S\setminus\{\x\}}\Q(\x,\mathbf z)$, which possibly depends on the disease states $\{x_1,\ldots, x_n\}$ and on the edge states $\{1_{(c,d)}(\x): (c,d)\in[n]\times [n]:d\in\I(\x)\}$, does not depend on $1_{(a,b)}(\x)$. We next observe that, by the definitions of $(a,b)$-agnostic states and jump times, none of the possible transitions from $\XX^{(k-1)}$ to $\XX^{(k)}$ involves an edge state update for $(a,b)$, 
which means that the values of both $X_{\langle a,b\rangle}=1_{(a,b)}(\X)$ and $X_{n^2+\langle a,b\rangle}$ are preserved in such transitions. Therefore, for every $\x\in \XX^{(k-1)}$, there exists at most one state $\y \in\XX^{(k)}$ that potentially succeeds $\x$. For such a state $\y$, the transition rate $\Q(\x,\y)$, given by 
\begin{align*}
     \Q(\x,\y) =
     \begin{cases}
      \sum_{q=1}^m \sum_{d\in\I_q(\x)} B_{pq }1_{(c,d)}(\x) \, &\text{if }\exists\,\, c\in\S_p(\x) \text{ such that }\y = \x_{\infect c} \\
      \gamma_p \quad &\text{if }\exists\,\, c\in\I_p(\x) \text{ such that }\y = \x_{\recover c},\\
      \lambda \frac{\rho_{pq}}{n} \quad &\text{if }\exists\,\, (c,d)\in\A_p\times \A_q\setminus \{(a,b)\} \text{ such that } \y = \x_{\existence (c,d),} \\
      \lambda\left(1- \frac{\rho_{pq}}{n}\right) \quad &\text{if }\exists\,\, (a,b)\in\A_p\times \A_q\setminus \{(a,b)\} \text{ such that } \y=\x_{\nonexistence(c,d)},
     \end{cases}
 \end{align*}
 does not depend on $x_{\langle a,b\rangle}=y_{\langle a,b\rangle}$ or on $x_{n^2+\langle a,b\rangle}=y_{n^2+\langle a,b\rangle}$, because $b\notin \I(\x)$ implies that $(a,b)\notin\cup_{p=1}^m\cup_{c\in\S_p(\x)}\cup_{q=1}^m\{(c,d):d\in\I_q(\x)\}$. It follows that the rate at which the Markov chain $\{\X(\tau)\}$ transitions from $\x$ to a state in $\XX^{(k)}$, given by $\sum_{\mathbf z\in\XX^{(k)}}\Q(\x, \mathbf z)=\Q(\x,\y)$, is time-invariant and takes the same value for every $\x\in\XX^{(k-1)}$. This means that, as long as the Markov chain $\{\X(\tau):\tau\geq 0\}$ does not leave the $(a,b)$-agnostic superstate $\XX^{(k-1)}$, the rate at which the chain transitions to $\XX^{(k)}$ remains the same regardless of transitions within $\XX^{(k-1)}$. We can express this formally as 
 $$
    \lim_{\Delta \tau\rightarrow 0}\frac{\Pr(\X(\tau+\Delta \tau)\in\XX^{(k)}\mid \X(\tau)\in\XX^{(k-1) }, \X(\tau)=\x)}{\Delta \tau}=\Q(\XX^{(k-1)},\XX^{(k)})
 $$
 for all $\tau\geq 0$ and $\x\in\XX^{(k-1)}$, where
 $$
    \Q(\XX^{(k-1)},\XX^{(k)}):= \lim_{\Delta \tau\rightarrow 0}\frac{\Pr(\X(\tau+\Delta \tau)\in\XX^{(k)}\mid \X(\tau)\in\XX^{(k-1) })}{\Delta \tau}
 $$
 denotes the time-invariant rate of transitioning from  $\XX^{(k-1)}$ to $\XX^{(k)}$.
 
 
 By Markovity, this implies that\footnote{In this paper, conditioning an event $H$ on $\{\X(\tau'):0\leq \tau'\leq \tau\}$ means conditioning $H$ on every $\X(\tau')$ for $0\leq \tau'\leq \tau$, i.e., conditioning H on the trajectory traced by the Markov chain during the interval $[0,\tau]$ and not just on the random set of tuples $\{\X(\tau'):0\leq \tau'\leq \tau\}$. Conditioning on the set $\{\X(\tau'):0\leq \tau'\leq \tau\}$ is not sufficient because sets, by definition, are unordered.}
 $$
    \lim_{\Delta \tau\rightarrow 0}\frac{\Pr(\X(\tau+\Delta \tau)\in\XX^{(k)}\mid \X(\tau)=\x, \{\X(\tau'):0\leq \tau'< \tau\}) }{\Delta\tau} = \Q(\XX^{(k-1)},\XX^{(k)} ),
 $$
 for all $\tau\geq 0$ and $\x\in\XX^{(k-1)}$, which means that the conditional rate at which the chain leaves $\XX^{(k-1)}$ time $\tau$ is independent of the history $\{\X(\tau'):\tau'\in[0,\tau]\}$. Since $\{1_{(a,b)}(\tau'):0\leq \tau'\leq \tau\}$ are determined by $\{\X(\tau'):0\leq \tau'\leq \tau\}$, it follows that
 $$
    \lim_{\Delta \tau\rightarrow 0}\frac{\Pr(\X(\tau+\Delta \tau)\in\XX^{(k)}\mid \X(\tau)=\x, \{1_{(a,b)}(\tau'):0\leq \tau'\leq \tau\}) }{\Delta\tau} = \Q(\XX^{(k-1)},\XX^{(k)} )
 $$
 for all $\tau\geq 0$ and $\x\in\XX^{(k-1)}$. Now, let $T_\existence(\tau):=\inf\{\tau'\geq 0:\X(\tau+\tau')=(\X((\tau+ \tau')^-))_{\existence(a,b)}\}$ be the (random) time elapsed between time $\tau$ and the first of the updates of $1_{(a,b)}$ that occur after time $\tau$ and result in $1_{(a,b)}=1$. Then,  the following holds for all $\x\in\XX^{(k-1)}$, $\tau\geq 0$,  $\sigma>0$ and sufficiently small $\Delta\tau>0$:
 \begin{align*}
    &\Pr(T_\uparrow(\tau)\geq \sigma\mid \X(\tau)=\x,\{1_{(a,b)}(\tau'):0\leq \tau'\leq \tau\}, \X(\tau+\Delta \tau)\in \XX^{(k)})\cr
    &=\Pr(T_\uparrow(\tau+\Delta\tau)\geq \sigma-\Delta \tau\mid \X(\tau)=\x,\{1_{(a,b)}(\tau'):0\leq \tau'\leq \tau\}, \X(\tau+\Delta \tau)\in \XX^{(k)}, T_\uparrow(\tau)\geq \Delta \tau )\cr
    &\quad\cdot \Pr(T_\uparrow(\tau)\geq \Delta \tau\mid \X(\tau)=\x,\{1_{(a,b)}(\tau'):0\leq \tau'\leq \tau\}, \X(\tau+\Delta \tau)\in \XX^{(k)})\cr
    &=\Pr(T_\uparrow(\tau+\Delta\tau)\geq \sigma-\Delta \tau\mid \X(\tau)=\x,\{1_{(a,b)}(\tau'):0\leq \tau'\leq \tau\}, \X(\tau+\Delta \tau)\in \XX^{(k)},\cr
    &\quad\quad\quad\quad\quad\quad\quad\quad\quad\quad\quad\quad\quad X_{n^2 + \langle a,b\rangle}(\tau')= X_{n^2+\langle a,b\rangle }(\tau)\,\forall\,\tau'\in [\tau,\tau+\Delta \tau) )\cr
    &\quad\cdot \Pr(T_\uparrow(\tau)\geq \Delta \tau\mid \X(\tau)=\x,\{1_{(a,b)}(\tau'):0\leq \tau'\leq \tau\}, \X(\tau+\Delta \tau)\in \XX^{(k)})\cr
    &\stackrel{(a)}= e^{-\lambda\frac{\rho_{ij} }{n}(\sigma-\Delta \tau)}\cdot \Pr(T_\uparrow(\tau)\geq \Delta \tau\mid \X(\tau)=\x,\{1_{(a,b)}(\tau'):0\leq \tau'\leq \tau\}, \X(\tau+\Delta \tau)\in \XX^{(k)})\cr
    &\stackrel{\Delta \tau\rightarrow 0}{\longrightarrow} e^{-\lambda\frac{\rho_{ij} }{n}\sigma }\cdot 1\cr
    &\stackrel{(b)}=\Pr(T_\uparrow(\tau)\geq \sigma\mid \X(\tau)=\x,\{1_{(a,b)}(\tau'):0\leq \tau'\leq \tau\}),
 \end{align*}
 where $(a)$ and $(b)$ follow from Markovity and the fact that $\Q(\mathbf z,\mathbf z_{\existence(a,b)})=\lambda\frac{\rho_{ij} }{n}$ for all $\mathbb z\in \mathbb S$. Now, let $T^{(1)}_{\existence}(\tau):=T_{\existence}(\tau)$ and $T^{(\ell)}_{\existence}(\tau):=T_\existence(T^{(\ell-1)}_{\existence }(\tau))$ for all $\ell\in\N$. Then, since $\{T_{\existence}^{(\ell)}\}_{\ell=1}^\infty$ are stopping times, similar arguments can be used to show the following for all $\sigma_1,\sigma_2,\ldots, \sigma_\ell\geq 0$ and all $\ell\in\N$:
 \begin{align*}
     \lim_{\Delta \tau\rightarrow0}& \Pr\left(T_\existence^{(1)}(\tau)\geq \sigma_1,\ldots, T_\existence^{(\ell)}(\tau)\geq \sigma_\ell \mid \X(\tau)=\x,\{1_{(a,b)}(\tau'):0\leq \tau'\leq \tau\}, \X(\tau+\Delta \tau)\in \XX^{(k)}\right)\cr
     &=\Pr\left(T_\existence^{(1)}(\tau)\geq \sigma_1,\ldots, T_\existence^{(\ell)}(\tau)\geq \sigma_\ell \mid \X(\tau)=\x,\{1_{(a,b)}(\tau'):0\leq \tau'\leq \tau\}\right).
 \end{align*}
 Similarly, if we let $T_\nonexistence^{(1)}:= \inf\{\tau'\geq 0:\X(\tau+\tau')=(\X((\tau+ \tau')^-))_{\nonexistence(a,b)}\}$ and $T^{(\ell)}_{\nonexistence}(\tau):=T_\nonexistence(T^{(\ell-1)}_{\nonexistence }(\tau))$ for all $\ell\in\N$, then we can show that for all $\sigma_{\existence1},\ldots,\sigma_{\existence\ell},\sigma_{\nonexistence_1},\ldots,\sigma_{\nonexistence\ell'}\geq 0$ and all $\ell,\ell'\in \N$,
  \begin{align*}
     &\lim_{\Delta \tau\rightarrow0} \Pr\Big(T_\existence^{(1)}(\tau)\geq \sigma_{\existence 1},\ldots, T_\existence^{(\ell)}(\tau)\geq \sigma_{\existence\ell}, T_\nonexistence^{(1)}(\tau)\geq \sigma_{\nonexistence1},\ldots, T_\nonexistence^{(\ell')}(\tau)\geq \sigma_{\nonexistence\ell'}\cr
     &\quad\quad\quad\quad\mid \X(\tau)=\x,\{1_{(a,b)}(\tau'):0\leq \tau'\leq \tau\}, \X(\tau+\Delta \tau)\in \XX^{(k)}\Big)\cr
     &=\Pr\Big(T_\existence^{(1)}(\tau)\geq \sigma_{\existence 1},\ldots, T_\existence^{(\ell)}(\tau)\geq \sigma_{\existence\ell}, T_\nonexistence^{(1)}(\tau)\geq \sigma_{\nonexistence1},\ldots, T_\nonexistence^{(\ell')}(\tau)\geq \sigma_{\nonexistence\ell'}\cr
     &\quad\quad\quad\mid \X(\tau)=\x,\{1_{(a,b)}(\tau'):0\leq \tau'\leq \tau\}\Big).
 \end{align*}
 As a result, we have the following for all $\sigma_{\existence1},\ldots,\sigma_{\existence\ell},\sigma_{\nonexistence_1},\ldots,\sigma_{\nonexistence\ell'}\geq 0$ and all $\ell,\ell'\in \N$:
 \begin{align*}
     &\frac{\Pr\Big(  \X(\tau+\Delta \tau)\in \XX^{(k)}  \mid \X(\tau)=\x,\{1_{(a,b)}(\tau')\}_{\tau'\in [0,\tau]}, \{T_\existence^{(\xi)}(\tau)\geq \sigma_{\existence\xi}\}_{\xi=1}^\ell,  \{T_\nonexistence^{(\xi)}(\tau)\geq \sigma_{\nonexistence\xi}\}_{\xi=1}^{\ell'} \Big) }{\Delta \tau} \cr
     &\stackrel{(a)}=\frac{ \Pr\Big( \{T_\existence^{(\xi)}(\tau)\geq \sigma_{\existence\xi}\}_{\xi=1}^\ell,  \{T_\nonexistence^{(\xi)}(\tau)\geq \sigma_{\nonexistence\xi}\}_{\xi=1}^{\ell'}    \mid \X(\tau)=\x,\{1_{(a,b)}(\tau')\}_{\tau'\in [0,\tau]}, \X(\tau+\Delta \tau)\in \XX^{(k)} \Big) }{\Pr\Big( \{T_\existence^{(\xi)}(\tau)\geq \sigma_{\existence\xi}\}_{\xi=1}^\ell,  \{T_\nonexistence^{(\xi)}(\tau)\geq \sigma_{\nonexistence\xi}\}_{\xi=1}^{\ell'}    \mid \X(\tau)=\x,\{1_{(a,b)}(\tau')\}_{\tau'\in [0,\tau]}\Big)}\cr
     &\quad\times \frac{\Pr\Big(  \X(\tau+\Delta \tau)\in \XX^{(k)}  \mid \X(\tau)=\x,\{1_{(a,b)}(\tau')\}_{\tau'\in [0,\tau]}\Big)  }{ \Delta \tau }\cr
     &\stackrel{\Delta\tau\rightarrow0}\longrightarrow 1\times \Q(\XX^{(k-1)},\XX^{(k)}),
 \end{align*}
 i.e., 
 \begin{gather*}
     \lim_{\Delta\tau\rightarrow0}\frac{\Pr\Big(  \X(\tau+\Delta \tau)\in \XX^{(k)}  \mid \X(\tau)=\x,\{1_{(a,b)}(\tau')\}_{\tau'\in [0,\tau]}, \{T_\existence^{(\xi)}(\tau)\}_{\xi=1}^\ell,  \{T_\nonexistence^{(\xi)}(\tau)\}_{\xi=1}^{\ell'} \Big) }{\Delta \tau} \cr
     = \Q(\XX^{(k-1)},\XX^{(k)})
 \end{gather*}
 for all $\ell,\ell'\in\N$. Now, observe that if we are given $\{1_{(a,b)}(\tau'):0\leq \tau'\leq \tau\}$, then $\{1_{(a,b)}(\tau'):\tau\leq \tau'\leq t\}$ are determined by a subset of the random variables $\{T_\existence^{(\ell)}\}_{\ell=1}^\infty \cup\{T_{\nonexistence}^{(\ell)}\}_{\ell=1}^\infty$ and this subset is random but almost surely finite. Hence, the above limit implies the following for all $\x\in\XX^{(k-1)}$ and $\tau\geq 0$:
 $$
    \lim_{\Delta \tau\rightarrow 0}\frac{\Pr\left(\X(\tau+\Delta\tau)\in\XX^{(k)}\mid \X(\tau)=\x,\{1_{(a,b)}(\tau'):0\leq \tau'\leq t\} \right)  }{\Delta\tau}=\Q(\XX^{(k-1)},\XX^{(k)}).
 $$
 Moreover, since the above arguments remain valid if we replace $\XX^{(k)}$ with an arbitrary $(a,b)$-agnostic superstate $\mathbb Y\neq\XX^{(k-1)}$, we can generalize the above to 
  $$
    \lim_{\Delta \tau\rightarrow 0}\frac{\Pr\left(\X(\tau+\Delta\tau)\in\mathbb Y\mid \X(\tau)=\x,\{1_{(a,b)}(\tau'):0\leq \tau'\leq t\} \right)  }{\Delta\tau}=\Q(\XX^{(k-1)},\mathbb Y )
 $$
 for all $(a,b)$-agnostic superstates $\mathbb Y\neq\XX^{(k-1)}$. It follows that
 \begin{align}\label{eq:time_cond_indep}
     \lim_{\Delta \tau\rightarrow 0}\frac{\Pr\left(\X(\tau+\Delta\tau)\notin\XX^{(k-1)}\mid \X(\tau)=\x,\{1_{(a,b)}(\tau'):0\leq \tau'\leq t\} \right)  }{\Delta\tau}=\sum_{\mathbb Y\neq \XX^{(k-1)}}\Q(\XX^{(k-1)},\mathbb Y )
 \end{align}
 for all $\x\in\XX^{(k-1)}$ and all $\tau\geq 0$. This means that, given $\{\X(L_{k-1})=\x\}$ for some $\x\in\XX^{(k-1)}$, the random quantity $L_{k}-L_{k-1}$, which is the duration of time spent by the Markov chain in $\XX^{(k-1)}$, is conditionally exponentially distributed with rate $\sum_{\mathbb Y\neq \XX^{(k-1)}}\Q(\XX^{(k-1)},\mathbb Y )$ and it is conditionally independent of $\{1_{(a,b)}(\tau'):0\leq \tau'\leq t\}$. Besides, the above deductions also imply the following: given $\{1_{(a,b)}(\tau'):0\leq\tau'\leq t\}$ and given  that the chain exits $\XX^{(k-1)}$ from state $\x$ at time $\tau\geq 0$, the conditional probability that it enters $\XX^{(k)}$ at time $\tau$ is 
\begin{align*}
     &\Pr(\X(\tau)\in\XX^{(k)}\mid \X(\tau^-)=\x,\{1_{(a,b)}(\tau'):0\leq \tau'\leq t\}, \X(\tau)\notin\XX^{(k-1)})\cr   
     &=\lim_{\Delta\tau\rightarrow0}\Pr(\X(\tau)\in\XX^{(k)}\mid \X(\tau-\Delta\tau)=\x,\{1_{(a,b)}(\tau'):0\leq \tau'\leq t\}, \X(\tau)\notin\XX^{(k-1)})\cr
     &=\lim_{\Delta \tau\rightarrow 0}\frac{\Pr(\X(\tau)\in\XX^{(k)}\mid \X(\tau-\Delta\tau)=\x,\{1_{(a,b)}(\tau'):0\leq \tau'\leq t\}) }{\Pr(\X(\tau)\notin\XX^{(k-1)}\mid \X(\tau-\Delta\tau)=\x,\{1_{(a,b)}(\tau'):0\leq \tau'\leq t\})}\cr
     &=\lim_{\Delta\tau\rightarrow0}\frac{\Pr(\X(\tau)\in\XX^{(k)}\mid \X(\tau-\Delta\tau)=\x,\{1_{(a,b)}(\tau'):0\leq \tau'\leq t\})}{\Delta\tau}\cr
     &\quad\times\lim_{\Delta\tau\rightarrow0}\left(\frac{  \Pr(\X(\tau)\notin\XX^{(k-1)}\mid \X(\tau-\Delta\tau)=\x,\{1_{(a,b)}(\tau'):0\leq \tau'\leq t\}) }{\Delta\tau}\right)^{-1}\cr
     &=\frac{ \Q(\XX^{(k-1)},\XX^{(k)})  }{ \sum_{\mathbb Y\neq \XX^{(k-1)}} \Q(\XX^{(k-1)},\mathbb Y )}.
 \end{align*}
 By invoking Markovity in the preceding arguments, the above can be generalized to 
 \begin{align}\label{eq:state_cond_indep}
     \Pr(\X(\tau)\in \XX^{(k)}\mid \X(\tau^-)=\x,\{\X(\tau')\}_{\tau'\in[0,\tau)}, \{1_{(a,b)}(\tau')\}_{\tau'\in[0,t]}, \X(\tau)\notin \XX^{(k-1)})=\frac{ \Q(\XX^{(k-1)},\XX^{(k)})  }{ \sum_{\mathbb Y\neq \XX^{(k-1)}} \Q(\XX^{(k-1)},\mathbb Y )},
 \end{align}
 which implies that
 \begin{gather*}
    \Pr(\X(\tau)\in \XX^{(k)}\mid \X(\tau')=\x\,\forall\,\tau'\in [L_{k-1},\tau), \{\X(\tau')\}_{\tau'\in[0,L_{k-1})} \{1_{(a,b)}(\tau')\}_{\tau'\in[0,t]}, \X(\tau)\notin \XX^{(k-1)} )\cr
    =\frac{ \Q(\XX^{(k-1)},\XX^{(k)})  }{ \sum_{\mathbb Y\neq \XX^{(k-1)}} \Q(\XX^{(k-1)},\mathbb Y )}.
 \end{gather*}
 Equivalently, 
 $$
    \Pr(\X(L_k)\in\XX^{(k)}\mid \X(L_{k-1})=\x, \{\X(\tau')\}_{\tau'\in[0,L_{k-1})}, \{1_{(a,b)}(\tau')\}_{\tau'\in[0,t]},L_k=\tau)=\frac{ \Q(\XX^{(k-1)},\XX^{(k)})  }{ \sum_{\mathbb Y\neq \XX^{(k-1)}} \Q(\XX^{(k-1)},\mathbb Y )}.
 $$
 for all $\tau> 0$ and $\x\in\XX^{(k-1)}$. Hence,
 \begin{align}\label{eq:state_cond_indep}
     \Pr(\X(L_k)\in \XX^{(k)}\mid  \X(L_{k-1})\in\XX^{(k-1)},\{\X(\tau')\}_{\tau'\in[0,L_{k-1})},\{1_{(a,b)}(\tau')\}_{\tau'\in[0,t]}, L_k)=\frac{ \Q(\XX^{(k-1)},\XX^{(k)})  }{ \sum_{\mathbb Y\neq \XX^{(k-1)}} \Q(\XX^{(k-1)},\mathbb Y )}.
 \end{align}
 Since the entire analysis above holds for all $k\in [N]$, we have the following for all $\sigma_1,\sigma_2,\ldots, \sigma_N\geq 0$:
 \begin{align}\label{eq:last_leg}
     &\Pr\left(\left(\cap_{k=1}^N\{L_k-L_{k-1}\geq \sigma_k\}\right)\cap\left(\cap_{k=0}^N\{\X(L_k)\in\XX^{(k)}\}\right)\mid \X(0)\in\XX^{(0)}, \{1_{(a,b)}(\tau')\}_{\tau'\in[0,t]} \right)\cr
     &=\prod_{k=1}^N\Pr(\X(L_{k})\in\XX^{(k)}, L_k - L_{k-1}\geq \sigma_k\mid  \{\X(L_{\xi})\in\XX^{(\xi)}, L_\xi-L_{\xi-1}\geq \sigma_\xi \}_{\xi=0}^{k-1},\{1_{(a,b)}(\tau')\}_{\tau'\in[0,t]} )\cr
     &=\prod_{k=1}^N\Pr( L_k - L_{k-1}\geq \sigma_k\mid  \{\X(L_{\xi})\in\XX^{(\xi)}, L_\xi-L_{\xi-1}\geq \sigma_\xi \}_{\xi=0}^{k-1},\{1_{(a,b)}(\tau')\}_{\tau'\in[0,t]} )\cr
     &\quad\times\prod_{k=1}^N\Pr(\X(L_{k})\in\XX^{(k)}\mid  L_k - L_{k-1}\geq \sigma_k,  \{\X(L_{\xi})\in\XX^{(\xi)}, L_\xi-L_{\xi-1}\geq \sigma_\xi \}_{\xi=0}^{k-1},\{1_{(a,b)}(\tau')\}_{\tau'\in[0,t]} )\cr
     &\stackrel{(a)}=\prod_{k=1}^N\Pr( L_k - L_{k-1}\geq \sigma_k\mid  \X(L_{k-1})\in\XX^{(k-1)},\{1_{(a,b)}(\tau')\}_{\tau'\in[0,t]} )\cr
     &\quad\times\prod_{k=1}^N\Pr(\X(L_k)\in\XX^{(k)}\mid  L_k - L_{k-1}\geq \sigma_k,  \{\X(L_{\xi})\in\XX^{(\xi)}, L_\xi-L_{\xi-1}\geq \sigma_\xi \}_{\xi=0}^{k-1},\{1_{(a,b)}(\tau')\}_{\tau'\in[0,t]} )\cr
     &\stackrel{(b)}= \prod_{k=1}^N \exp\left(-\sigma_k\sum_{\mathbb Y\neq \XX^{(k-1)}}\Q(\XX^{(k-1)}, \mathbb Y) \right)\times\prod_{k=1}^N \frac{\Q(\XX^{(k-1)},\XX^{(k)}) }{  \sum_{\mathbb Y\neq \XX^{(k-1)}}\Q(\XX^{(k-1)}, \mathbb Y)},
 \end{align}
 where $(a)$ is a consequence of the strong Markov property and the fact that $\{L_k\}_{k=1}^N$ are stopping times, and (b) follows from~\eqref{eq:time_cond_indep} and~\eqref{eq:state_cond_indep}. Since $\sigma_1,\ldots, \sigma_N$ are arbitrary and since the above expression is independent of $\{1_{(a,b)}(\tau):0\leq \tau\leq t\}$, we have shown that for the event
 $$
    F\cap\{L_N\geq \kappa\} = \left\{\X(0)\in\XX^{(0)},\ldots, \X(L_N) \in\XX^{(N)}, \sum_{k=1}^N (L_k-L_{k-1})\geq \kappa \right\},
 $$
 we have $\Pr(F\cap\{L_N\geq \kappa\}\mid \{1_{(a,b)}(\tau):0\leq \tau\leq t\}) =\Pr(F\cap\{L_N\geq \kappa\})$. As a result,
 \begin{align*}
     &\Pr(\{\tilde K=L_N\}\cap\{L_N\geq \kappa\}\mid \{1_{(a,b)}(\tau):0\leq \tau\leq t\})\cr
     &=\Pr\left(\cup_{F\in\F_N(\kappa)} (F\cap\{L_N\geq \kappa\})\mid \{1_{(a,b)}(\tau):0\leq \tau\leq t\} \right)\cr
     &\stackrel{(a)}=\sum_{F\in\F_N(\kappa)} \Pr\left(F\cap\{L_N\geq \kappa\}\mid \{1_{(a,b)}(\tau):0\leq \tau\leq t\} \right)\cr
     &=\sum_{F\in\F_N(\kappa)} \Pr(F\cap \{L_N\geq \kappa\})\cr
     &\stackrel{(b)}=\Pr\left(\cup_{F\in\F_N(\kappa)} (F\cap\{L_N\geq \kappa\})\right)\cr
     &=\Pr(\{\tilde K=L_N\}\cap\{L_N\geq \kappa\}),
 \end{align*}
 where $(a)$ and $(b)$ hold because the definition of $\F_N(\kappa)$ implies that  $\F_N(\kappa)$ is a collection of disjoint events. Since $\{\tilde K\geq \kappa\}=\cup_{N=0}^\infty\left(\{\tilde K=L_N\}\cap\{L_N\geq \kappa\}\right)$ and since $\{\tilde K=L_1\}\cap\{L_1\geq \kappa\}, \{\tilde K=L_2\}\cap\{L_2\geq \kappa\},\ldots$ are disjoint events, it follows that $\Pr(\tilde K\geq \kappa \mid \{1_{(a,b)}(\tau):0\leq \tau\leq t\})=\Pr(\tilde K\geq \kappa)$. Moreover, since $\kappa\geq 0$ is arbitrary, this means that $\tilde K$ is independent of $\mid \{1_{(a,b)}(\tau):0\leq \tau\leq t\}$. Finally, since $K$ and $(T,1_{(a,b)}(t))$ are functions of $\tilde K$ and $ \{1_{(a,b)}(\tau):0\leq \tau\leq t\}$, respectively, it follows that $K$ and $(T,1_{(a,b)}(t))$ are independent.
\end{proof}

\begin{remark}\label{rem:tilde_K}
Observe that in the proof of Lemma~\ref{lem:simple_but_difficult}, $\eqref{eq:last_leg}$ implies that the event $\{L_N\geq \kappa\}\cap\left(\cap_{k=0}^N\{\X(L_k)\in\XX^{(k)}\}\right)$ is independent of $\{1_{(a,b)}(\tau):0\leq \tau\leq t\}$ (since $L_N=\sum_{k=1}^N (L_{k}-L_{k-1})$ and since the initial state $\X(0)$ is assumed to be non-random). Note that this is true for all the choices of $(a,b)$-agnostic superstates $\{\XX^{(k)}\}_{k=0}^{N}$ that satisfy $\cap_{k=0}^N\{\X(L_k)\in\XX^{(k)}\}\subset\{\tilde K=L_N\}$ and hence also for all $\{\XX^{(k)}\}_{k=0}^{N}$ that satisfy $\cap_{k=0}^N\{\X(L_k)\in\XX^{(k)}\}\subset\{\tilde K=L_N\}\cap\{\X(\tilde K)\in\mathbb Y\}$, where $\mathbb Y$ is an arbitrary $(a,b)$-agnostic superstate. Now, let us by $\mathcal X$ the set of all $\{\XX^{(k)}\}_{k=0}^N$ satisfying $\cap_{k=0}^N\{\X(L_k)\in\XX^{(k)}\}\subset\{\tilde K=L_N\}\cap\{\X(\tilde K)\in\mathbb Y\}$, we have 
$$
    \cup_{\{\XX^{(k)}\}_{k=0}^N\in \mathcal X} \left( \cap_{k=0}^N\{\X(L_k)\in\XX^{(k)}\}  \right) = \{\tilde  K=L_N \}\cap\{\X(\tilde K)\in\mathbb Y\}.
$$
Then, by the preceding arguments we have \begin{align}
    &\Pr(\{\tilde K=L_N\}\cap \{\X(\tilde K)\in\mathbb Y\}\cap\{L_N\geq \kappa\} \mid \{1_{(a,b)}(\tau):0\leq \tau\leq t\})\cr
    &=\Pr\left(\cup_{\{\XX^{(k)}\}_{k=0}^N\in \mathcal X} \left( \cap_{k=0}^N\{\X(L_k)\in\XX^{(k)}\}  \right)\cap\{L_N\geq \kappa\}\mid \{1_{(a,b)}(\tau):0\leq \tau\leq t\} \right)\cr
    &= \sum_{\{\XX^{(k)}\}_{k=0}^N\in \mathcal X} \Pr\left(\cap_{k=0}^N \{\X(L_k)\in\XX^{(k)}\} \cap\{L_N\geq \kappa\} \mid \{1_{(a,b)}(\tau):0\leq \tau\leq t\}  \right)\cr
    &=\sum_{\{\XX^{(k)}\}_{k=0}^N\in \mathcal X} \Pr\left(\cap_{k=0}^N \{\X(L_k)\in\XX^{(k)}\} \cap\{L_N\geq \kappa\} \right)\cr
    &=\Pr\left(\cup_{\{\XX^{(k)}\}_{k=0}^N\in \mathcal X} \left( \cap_{k=0}^N\{\X(L_k)\in\XX^{(k)}\} \cap\{L_N\geq \kappa\} \right)\right)\cr
    &=\Pr(\{\tilde K=L_N\}\cap \{\X(\tilde K)\in\mathbb Y\}\cap\{L_N\geq \kappa\}),
\end{align}
which shows that $\{\tilde K=L_N\}\cap \{\X(\tilde K)\in\mathbb Y\}\cap\{L_N\geq \kappa\}$ is independent of $\{1_{(a,b)}(\tau):0\leq \tau\leq t\}$. Since $\{\tilde K\geq \kappa\}\cap\{\X(\tilde K)\in\mathbb Y\}=\cup_{N=0}^\infty\left(\{\tilde K=L_N\}\cap \{\X(\tilde K)\in\mathbb Y\}\cap\{L_N\geq \kappa\}\right)$, it follows that $\{\tilde K\geq \kappa\}\cap\{\X(\tilde K)\in\mathbb Y\}$ is independent of $\{1_{(a,b)}(\tau):0\leq \tau\leq t\}$. As a consequence of this observation, the fact that $\mathbb Y$ is an arbitrary $(a,b)$-agnostic superstate and the fact that $\kappa$ is an arbitrary non-negative number, we have that $(\XX(\tilde K),\tilde K)$ are independent of $\{1_{(a,b)}(\tau):0\leq \tau\leq t\}$, where $\XX(\tilde K)$ denotes the $(a,b)$-agnostic superstate of the chain at time $\tilde K$.
\end{remark}

In order to state the remaining lemmas, 
we need to introduce some additional notation. For two nodes $(a,b)\in [n]\times [n]$, we let $b\stackrel{t}{\rightsquigarrow}a$ denote the event that $b$ transmits pathogens to $a$ at time $t$. For a given time interval $[t, t+\Delta t)\subset [0,\infty)$, we let $\left\{b\stackrel{t, \Delta t}{\rightsquigarrow}a\right\}:=\cup_{\tau\in [t, t+\Delta t)} \{ b\stackrel{\tau }{\rightsquigarrow}a\}$. The complement of this event is denoted by $\left\{b\stackrel{t, \Delta t}{\centernot\rightsquigarrow}a\right\}$. For two given node sets $A, B\subset [n]$, we use $\{B\stackrel{t}{\rightsquigarrow} A\}$ to denote the event that some node(s) of $B$ infect(s) one or more nodes in $A$ at time $t$. 

We now provide a sequence of lemmas that we later use to prove Proposition~\ref{prop:bounds}.

\begin{lemma}\label{lem:obvious_two}
Suppose $a\in\A_i$, $b\in\A_j$, $y\in\{0,1\}$, and $t_1,t_2\in [0,\infty)$ such that $t_1<t_2$. Given that 
$b\in \I_j(t_1):=\I_j( \X(t_1) )$ and that $1_{(a,b)}(\tau) := 1_{(a,b)}(\X(\tau))=y$ for all $\tau\in [t_1,t_2)$, the conditional probability that $b$ neither recovers nor infects $a$ during the interval $[t_1,t_2)$ is $e^{-(B_{ij}\delta_{1y}+\gamma_j)(t_2 - t_1) }$, where $\delta_{ij}$ is the Kronecker delta.
\end{lemma}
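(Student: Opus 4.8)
## Proof Plan for Lemma~\ref{lem:obvious_two}

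The plan is to condition on the trajectory of $1_{(a,b)}$ being constantly equal to $y$ on $[t_1,t_2)$ and to identify, under this conditioning, the two competing ``clocks'' that can end the event in question: the recovery clock of $b$ and (if $y=1$) the transmission clock along the edge $(a,b)$. First I would invoke time-homogeneity to reduce to the case $t_1 = 0$, writing $\Delta := t_2 - t_1$. Then I would use the description of the infinitesimal generator $\Q$ from Section~\ref{sec:formulation}: conditional on $\X(\tau)$ with $b \in \I_j(\X(\tau))$, the instantaneous rate at which $b$ recovers is $\gamma_j$, and the instantaneous rate at which $b$ transmits pathogens to $a$ is $B_{ij} 1_{(a,b)}(\X(\tau))$, which equals $B_{ij}\delta_{1y}$ on the conditioning event. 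Crucially, these two rates do not depend on the rest of the network state (they depend only on $b$'s disease state, the edge state $1_{(a,b)}$, and the model constants), nor do they depend on which other transitions the chain makes. So the ``neither recovers nor infects $a$'' event is governed by a time-homogeneous exponential survival analysis with total hazard rate $B_{ij}\delta_{1y} + \gamma_j$ throughout $[0,\Delta)$, giving survival probability $e^{-(B_{ij}\delta_{1y} + \gamma_j)\Delta}$.

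The key steps, in order, would be: (i) reduce to $t_1 = 0$ by time-homogeneity; (ii) discretize $[0,\Delta)$ into $N$ subintervals of length $\Delta/N$ and, using Lemma~\ref{lem:obvious} (no more than one transition per small interval, with probability $o(\Delta t)$), write the conditional probability that $b$ neither recovers nor infects $a$ in a subinterval, given $\X$ at the start of that subinterval (and given the conditioning on $1_{(a,b)} \equiv y$), as $1 - (B_{ij}\delta_{1y} + \gamma_j)\frac{\Delta}{N} + o(\Delta/N)$ — here one uses that, conditional on the edge state and $b$'s state, the rates of the relevant transitions are $\gamma_j$ and $B_{ij}\delta_{1y}$ independently of the remaining coordinates, so the conditioning on $\{1_{(a,b)}(\tau) = y \,\forall \tau\}$ does not alter these rates; (iii) multiply over the $N$ subintervals using the Markov property, obtaining $\left(1 - (B_{ij}\delta_{1y} + \gamma_j)\frac{\Delta}{N} + o(\Delta/N)\right)^N$; (iv) let $N \to \infty$ to get $e^{-(B_{ij}\delta_{1y} + \gamma_j)\Delta}$. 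An alternative, cleaner route is to appeal directly to Lemma~\ref{lem:simple_but_difficult} and Remark~\ref{rem:tilde_K}: since $(\XX(\tilde K), \tilde K)$ (hence the whole ``agnostic'' evolution of $b$'s disease state and the recovery clock) is independent of $\{1_{(a,b)}(\tau): 0 \le \tau \le t\}$, conditioning on $1_{(a,b)} \equiv y$ does not change the law of $b$'s recovery time; one then only has to add the transmission hazard $B_{ij}\delta_{1y}$, which by the model is a rate-$B_{ij}$ Poisson clock active exactly when the edge is present, i.e. on the whole interval when $y = 1$ and never when $y = 0$.

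The main obstacle is making precise the claim in step (ii) that conditioning on the full future path $\{1_{(a,b)}(\tau) = y : \tau \in [t_1, t_2)\}$ does not bias the recovery/transmission rates of $b$ — one is conditioning on an event that refers to the future, so a naive application of the Markov property is not immediately licensed. The careful way to handle this is exactly the machinery already developed for Lemma~\ref{lem:simple_but_difficult}: the edge-update transitions of $(a,b)$ form a Poisson process with constant rate $\lambda$ that is independent of the disease dynamics (since $\Q(\x,\x_{\existence(a,b)}) + \Q(\x,\x_{\nonexistence(a,b)}) = \lambda$ for all $\x$ and the re-initialization outcome is drawn independently), so the event $\{1_{(a,b)} \equiv y \text{ on } [t_1,t_2)\}$ is independent of the disease-state evolution of $b$ (via $(a,b)$-agnostic superstates and $(a,b)$-agnostic jump times, as in Remark~\ref{rem:tilde_K}). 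Once that independence is in hand, the recovery part contributes $e^{-\gamma_j \Delta}$ and, conditional on the edge being present throughout (the $y=1$ case), the transmission part independently contributes $e^{-B_{ij}\Delta}$ by the model's stipulation that the transmission-time along a present edge is exponential with rate $B_{ij}$; multiplying gives $e^{-(B_{ij}\delta_{1y} + \gamma_j)\Delta}$, and in the $y=0$ case $b$ simply cannot infect $a$, leaving $e^{-\gamma_j\Delta}$. Re-substituting $\Delta = t_2 - t_1$ completes the argument.
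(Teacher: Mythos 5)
Your discretization skeleton is essentially the paper's: fix a state $\x$ with $b\in\I_j(\x)$ and $1_{(a,b)}(\x)=y$, read off from $\Q$ that over a short interval the probability of a transmission from $b$ to $a$ is $B_{ij}\delta_{1y}\Delta t+o(\Delta t)$ and of a recovery of $b$ is $\gamma_j\Delta t+o(\Delta t)$, combine these via Lemma~\ref{lem:obvious} and inclusion--exclusion, multiply over $N$ subintervals using Markovity, and let $N\to\infty$. The constants and the final expression are right.

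The one place where your plan has a genuine gap is the treatment of the conditioning on the future event $\{1_{(a,b)}(\tau)=y\ \forall\,\tau\in[t_1,t_2)\}$. You correctly identify this as the main obstacle, but the repair you propose --- importing the independence machinery of Lemma~\ref{lem:simple_but_difficult} and Remark~\ref{rem:tilde_K} --- does not directly deliver what is needed: those results establish independence of the \emph{infection time of $b$} from the edge path of $(a,b)$, whereas the delicate object here is the transmission event from $b$ to $a$, whose rate is $B_{ij}1_{(a,b)}$ and is therefore manifestly \emph{not} independent of that edge path. The paper sidesteps the issue with a much lighter device: it never conditions on the future inside the induction. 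In each subinterval it computes the probability of the three-fold intersection $\{\text{no transmission}\}\cap\{\text{no recovery}\}\cap\{\text{no flip of }1_{(a,b)}\}$ given only the state at the start of that subinterval (legitimate Markov conditioning), obtains in the limit
$e^{-\left(B_{ij}\delta_{1y}+\gamma_j+\lambda\left(y\left(1-\rho_{ij}/n\right)+(1-y)\rho_{ij}/n\right)\right)(t_2-t_1)}$
for the joint event that also includes $\{1_{(a,b)}\equiv y\}$, separately computes
$\Pr\left(1_{(a,b)}(\tau)=y\ \forall\,\tau\in[t_1,t_2)\mid\X(t_1)=\x\right)=e^{-\lambda\left(y\left(1-\rho_{ij}/n\right)+(1-y)\rho_{ij}/n\right)(t_2-t_1)}$
by the same method, and divides; the $\lambda$-dependent factors cancel and the stated conditional probability drops out. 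If you keep your step (ii) as written you would have to prove the conditional-rate claim from scratch; replacing it with this joint-probability-then-divide computation is both shorter and fully within reach of the tools you already cite.
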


\begin{proof}
Let $\mathbb X:=\{\x\in\mathbb S:b\in \I_j(\x),1_{(a,b)}(\x)=y\}$. Also, let $\Delta t>0$.  Since the rate of infection transmission from $b$ to $a$ at time $t_1$ is $B_{ij}1_{(a,b)}(\X(t_1))$, we have the following for all $\x\in\mathbb X$:
$$
    \Pr\left (b\stackrel{t_1,\Delta t}{ \rightsquigarrow} a \Bigm \vert \X(t_1) =\x\right) = B_{ij}\delta_{1y}\Delta t + o(\Delta t).
$$
On the other hand, denoting the event that $b$ recovers during $[t_1, t_1+\Delta t)$ by $D_b$, we have 
$$
    \Pr(D_b\mid \X(t)=\x) = {\gamma_j}\Delta t + o(\Delta t). 
$$
Similarly, if we let $F_{(a,b)}$ denote the event that the edge state $1_{(a,b)}$ flips (i.e., changes from $y$ to $1-y$) during $[t_1,t_1+\Delta t)$, we have $\Pr(F_{(a,b)}\mid \X(t)=\x)=\lambda \left( y\left(1-\frac{\rho_{ij} }{n} \right)+(1-y)\frac{\rho_{ij} }{n}\right) \Delta t+o(\Delta t)$.
As a result, we have
\begin{align*}
    &\Pr\left(\left\{b\stackrel{t_1,\Delta t}{\centernot\rightsquigarrow}a\right\}\cap \bar D_b\cap\bar F_{ (a,b) } \Bigm \vert \X(t_1) = \x\right) \cr
    &= 1 - \Pr\left(\left\{b\stackrel{t_1, \Delta t }{ \rightsquigarrow  } a \right\}\cup D_b\cup F_{(a,b)} \Bigm \vert \X(t_1)=\x \right)\cr
    &\stackrel{(a)}{=} 1 - \Pr\left(b\stackrel{t_1, \Delta t }{ \rightsquigarrow  } a\Bigm \vert \X(t_1)= \x\right) - \Pr(D_b\mid \X(t_1)=\x) - \Pr(F_{(a,b)}\mid \X(t_1)=\x \}) + o(\Delta t) \cr
    &=1 - (B_{ij}\delta_{1y}\Delta t + o(\Delta t)) - (\gamma_j\Delta t + o(\Delta t))-\left(\lambda \left( y\left(1-\frac{\rho_{ij} }{n} \right)+(1-y)\frac{\rho_{ij} }{n}\right) \Delta t + o(\Delta t)\right) + o(\Delta t) \cr
    &= 1 - \left(B_{ij}\delta_{1y}+\gamma_j+\lambda \left( y\left(1-\frac{\rho_{ij} }{n} \right)+(1-y)\frac{\rho_{ij} }{n}\right) \right)\Delta t + o(\Delta t),
\end{align*}
where (a) follows from Lemma~\ref{lem:obvious} and the Inclusion-Exclusion principle. Since this holds for all $\x\in\mathbb X$, the above implies that
\begin{align*}
    &\Pr\left(\left\{b\stackrel{t_1,\Delta t}{\centernot\rightsquigarrow}a\right\}\cap \bar D_b \cap \bar F_{(a,b)} \Bigm \vert \X(t_1) \in \mathbb X\right)\cr
    &=1 - \left(B_{ij}\delta_{1y}+\gamma_j+\lambda \left( y\left(1-\frac{\rho_{ij} }{n} \right)+(1-y)\frac{\rho_{ij} }{n}\right) \right)\Delta t + o(\Delta t).
\end{align*}
Now, consider any $\ell\in\N_0$. By replacing $t_1$ with $t_1+\ell\Delta t$ in the above relation, we obtain
\begin{align*}
    &\Pr\left(\left\{b\stackrel{t_1+\ell \Delta t,\Delta t}{\centernot\rightsquigarrow}a\right\}\cap \bar D_b^{(\ell)} \cap \bar F^{(\ell)}_{(a,b)} \Bigm \vert \X(t_1+\ell\Delta t) \in\mathbb X \right)\cr
    &=1 - \left(B_{ij}\delta_{1y}+\gamma_j+\lambda \left( y\left(1-\frac{\rho_{ij} }{n} \right)+(1-y)\frac{\rho_{ij} }{n}\right) \right)\Delta t + o(\Delta t),
\end{align*}
where $D_b^{(\ell)}$ is the event that $b$ recovers during $[t_1 + \ell \Delta t , t_1+(\ell+1)\Delta t)$ and $F_{(a,b)}^{(\ell)}$ is the event that $1_{(a,b)}$ flips during $[t_1+\ell\Delta t, t_1+(\ell+1)\Delta t)$. Therefore, on setting $\Delta t=\frac{t_2-t_1}{N}$ for an arbitrary $N\in \N$, it follows that
\begin{align}\label{eq:long_long}
    &\Pr\left(\{b\in\I(t)\}\cap\left\{b\stackrel{t_1, t_2-t_1 }{\centernot\rightsquigarrow}a\right\}\cap\{1_{(a,b)}(\tau)=y\,\forall\,\tau\in [t_1,t_2)\}  \Bigm \vert \X(t_1)=\x \right)\cr
    &=\prod_{\ell = 1}^{N-1} \Pr\left(\left\{b\stackrel{t_1+ \ell\Delta t, \Delta t }{\centernot\rightsquigarrow}a\right\}\cap \bar D_b^{(\ell)}\cap \bar F_{(a,b)}^{(\ell)}  \Bigm \vert b\stackrel{t_1,\ell\Delta t }{\centernot\rightsquigarrow}a, \bar D_b, \bar D_b^{(1)}, \ldots, \bar D_b^{(\ell-1)}, \bar F_{(a,b)},\ldots, \bar F_{(a,b)}^{(\ell-1)}, \X(t_1)=\x \right)\cr
    &\quad\quad\times\Pr\left( \left\{b\stackrel{t_1, \Delta t }{\centernot\rightsquigarrow}a\right\}\cap \bar D_b \cap \bar F_{(a,b)} \Bigm \vert \X(t_1)=\x \right)\cr
    &= \prod_{\ell = 1}^{N-1} \Pr\left(\left\{b\stackrel{t_1+ \ell\Delta t, \Delta t }{\centernot\rightsquigarrow}a\right\}\cap \bar D_b^{(\ell)}\cap \bar F_{(a,b)}^{(\ell)}  \Bigm \vert \X(t_1 +\ell \Delta t)\in\mathbb X,  b\stackrel{t_1,\ell\Delta t }{\centernot\rightsquigarrow}a, \{\bar D_b^{(\sigma)}\}_{\sigma=0}^{\ell-1} , \{\bar F_{(a,b)}^{(\sigma)}\}_{\sigma=0}^{\ell-1}, \X(t_1)=\x \right)\cr
    &\quad\quad\times\left(1 - \left(B_{ij}\delta_{1y} +\gamma_j+\lambda \left( y\left(1-\frac{\rho_{ij} }{n} \right)+(1-y)\frac{\rho_{ij} }{n}\right)\right)\Delta t + o(\Delta t) \right)\cr
    &\stackrel{(a)}{=}\prod_{\ell=1}^{N-1} \left(1 - \left(B_{ij}\delta_{1y} +\gamma_j+\lambda \left( y\left(1-\frac{\rho_{ij} }{n} \right)+(1-y)\frac{\rho_{ij} }{n}\right)\right)\Delta t + o(\Delta t) \right)\cr
    &\quad\times\left(1 - \left(B_{ij}\delta_{1y} +\gamma_j+\lambda \left( y\left(1-\frac{\rho_{ij} }{n} \right)+(1-y)\frac{\rho_{ij} }{n}\right)\right)\Delta t + o(\Delta t) \right)\cr
    &=\left(1 - \left(B_{ij}\delta_{1y} +\gamma_j+\lambda \left( y\left(1-\frac{\rho_{ij} }{n} \right)+(1-y)\frac{\rho_{ij} }{n}\right)\right)\left(\frac{t_2-t_1}{N} \right) \right)^N + o\left(\frac{1}{N}\right),
\end{align}
where (a) follows from the following observation: for any $\y\in\mathbb X$, Markovity implies that 
\begin{align*}
    &\Pr\left(\left\{b\stackrel{t_1+ \ell\Delta t, \Delta t }{\centernot\rightsquigarrow}a\right\}\cap \bar D_b^{(\ell)}\cap \bar F_{(a,b)}^{(\ell)}  \Bigm \vert \X(t_1+\ell\Delta t)=\y,b\stackrel{t_1, \ell\Delta t }{\centernot\rightsquigarrow}a, \{\bar D_b^{(\sigma)}\}_{\sigma=0}^{\ell-1}, \{\bar F_{(a,b)}^{(\sigma )} \}_{\sigma=0}^{\ell-1}, \X(t_1)=\x \right)\cr
    &=\Pr\left(\left\{b\stackrel{t_1+ \Delta t, \Delta t }{\centernot\rightsquigarrow}a\right\}\cap \bar D_b^{(1)}\cap \bar F_{(a,b)}^{(1)}  \Bigm \vert \X(t_1+\Delta t)=\y \right)\cr
    &=1 - \left(B_{ij}\delta_{1y} +\gamma_j+\lambda \left( y\left(1-\frac{\rho_{ij} }{n} \right)+(1-y)\frac{\rho_{ij} }{n}\right)\right)\Delta t + o(\Delta t),
\end{align*}
which further implies that
\begin{align*}
    &\Pr\left(\left\{b\stackrel{t_1+ \ell\Delta t, \Delta t }{\centernot\rightsquigarrow}a\right\}\cap \bar D_b^{(\ell)}\cap \bar F_{(a,b)}^{(\ell)}  \Bigm \vert \X(t_1+\ell\Delta t)\in\mathbb X,b\stackrel{t_1, \ell\Delta t }{\centernot\rightsquigarrow}a, \{\bar D_b^{(\sigma)}\}_{\sigma=0}^{\ell-1}, \{\bar F_{(a,b)}^{(\sigma )} \}_{\sigma=0}^{\ell-1}, \X(t_1)=\x \right)\cr
    &=1 - \left(B_{ij}\delta_{1y} +\gamma_j+\lambda \left( y\left(1-\frac{\rho_{ij} }{n} \right)+(1-y)\frac{\rho_{ij} }{n}\right)\right)\Delta t + o(\Delta t).
\end{align*}
Now, since~\eqref{eq:long_long} holds for all $N\in\N$, it follows that 
\begin{align}\label{eq:obtain_exp_1}
    &\Pr\left(\{b\in\I(t)\}\cap\left\{b\stackrel{t_1, t_2-t_1 }{\centernot\rightsquigarrow}a\right\}\cap\{1_{(a,b)}(\tau)=y\,\forall\,\tau\in [t_1,t_2)\}  \Bigm \vert \X(t_1)=\x \right)\cr
    &=\lim_{N\rightarrow\infty}\left( \left(1 - \left(B_{ij}\delta_{1y} +\gamma_j+\lambda y\left( 1 - \frac{\rho_{ij} }{n}  \right) + \lambda\left(1-y\right)\frac{\rho_{ij} }{n} \right)\left(\frac{t_2-t_1}{N} \right) \right)^N + o\left(\frac{1}{N}\right)\right)\cr
    &=e^{ -\left( B_{ij}\delta_{1y} +\gamma_j + \lambda \left( y\left(1-\frac{\rho_{ij} }{n} \right)+(1-y)\frac{\rho_{ij} }{n}\right) \right)\left(t_2-t_1 \right) }.
\end{align}
Similarly, we can show that  
\begin{align}\label{eq:obtain_exp_2}
    \Pr\left(1_{(a,b)}(\tau)=y\,\forall\,\tau\in [t_1,t_2)  \Bigm \vert \X(t_1)=\x \right)=e^{ - \lambda \left( y\left(1-\frac{\rho_{ij} }{n} \right)+(1-y)\frac{\rho_{ij} }{n}\right) \left(t_2-t_1 \right) }.
\end{align}
As a result of~\eqref{eq:obtain_exp_1} and~\eqref{eq:obtain_exp_2},
\begin{align*}
    &\Pr\left(\{b\in\I(t)\}\cap\left\{b\stackrel{t_1, t_2-t_1 }{\centernot\rightsquigarrow}a\right\} \Bigm \vert \{1_{(a,b)}(\tau)=y\,\forall\,\tau\in [t_1,t_2)\},  \X(t_1)=\x \right)\cr
    &=\frac{ \Pr\left(\{b\in\I(t)\}\cap\left\{b\stackrel{t_1, t_2-t_1 }{\centernot\rightsquigarrow}a\right\}\cap\{1_{(a,b)}(\tau)=y\,\forall\,\tau\in [t_1,t_2)\}  \Bigm \vert \X(t_1)=\x \right)  }{ \Pr\left(1_{(a,b)}(\tau)=y\,\forall\,\tau\in [t_1,t_2)  \Bigm \vert \X(t_1)=\x \right) }\cr
    &=e^{-(B_{ij}\delta_{1y}+\gamma_j)(t_2-t_1)}.
\end{align*} 
Since the above holds for all $\x\in\mathbb X$, it follows that 
$$
    \Pr\left(\{b\in\I(t)\}\cap\left\{b\stackrel{t_1, t_2-t_1 }{\centernot\rightsquigarrow}a\right\} \Bigm \vert \{1_{(a,b)}(\tau)=y\,\forall\,\tau\in [t_1,t_2)\},  \X(t_1)\in\mathbb X \right) = e^{- (B_{ij}\delta_{1y}+\gamma_j) (t_2-t_1)},
$$
which proves the lemma.
\end{proof}

\begin{lemma}\label{lem:T_on}
Let $T\on:=\int_{t-K}^{t-T} 1_{(a,b)}(\sigma)d\sigma$ denote the total duration of time for which the edge $(a,b)$ exists in the network during $[t-K,t-T]$. Then, for all $\kappa,\tau\in [0,t]$ and all $\tau\on\in\left[0,(\kappa-\tau)_+\right]$, we have
$$
    \Pr\left(b\stackrel{0,t }{\centernot\rightsquigarrow}a\,\bigg\lvert\, (K,T,T\on)=(\kappa,\tau,\tau\on), b\in\I(t^-), (a,b)\notin E(t)\right) = e^{-B_{ij}\tau\on},
$$
where we define $\I(\sigma ^-):=\cup_{\varepsilon>0}\cap_{\tau'\in[\sigma-\varepsilon, \sigma)} \I(\tau')$ for all $\sigma\geq 0$. In other words, $c\in\mathcal I(\sigma^-)$ iff there exists an $\varepsilon>0$ such that $c\in\I(\tau')$ for all $\tau'\in[\sigma-\varepsilon,\sigma)$.
\end{lemma}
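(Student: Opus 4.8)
The plan is to condition away everything except the ``potential-transmission'' clock governing pathogen transfers from $b$ to $a$, which reduces the statement to a one-line survival-probability computation. The first step is to localize the relevant time window. I would show that on the conditioning event $\mathcal C:=\{(K,T,T\on)=(\kappa,\tau,\tau\on)\}\cap\{b\in\I(t^-)\}\cap\{(a,b)\notin E(t)\}$, the event $\{b\stackrel{0,t}{\centernot\rightsquigarrow}a\}$ coincides with the event that $b$ transmits no pathogen to $a$ during $[t-\kappa,t-\tau)$. On $[0,t-\kappa)$ the node $b$ is not infected: $b\in\I(t^-)$ forces $K$ to equal the elapsed time since $b$'s infection, so $\inf\{\sigma:b\in\I(\sigma)\}=t-\kappa$; and since every node moves monotonically $S\to I\to R$, the infected period of $b$ is an interval, pinned by these facts to contain $[t-\kappa,t)$, hence $b\in\I(\sigma)$ throughout $[t-\kappa,t)$. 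On $[t-\tau,t)$ the edge $(a,b)$ is absent: $T=\tau$ means the last edge-update of $(a,b)$ in $[0,t)$ occurs at $t-\tau$, hence (a.s., since an update landing exactly at $t$ has probability zero) $1_{(a,b)}$ is constant on $[t-\tau,t)$ and equal to $1_{(a,b)}(t)=0$. Because the transmission rate of $b$ to $a$ at time $\sigma$ is $B_{ij}1_{(a,b)}(\sigma)$ while $b\in\I(\sigma)$ and $0$ otherwise, it is inactive on $[0,t-\kappa)$ and vanishes on $[t-\tau,t)$. When $\kappa\le\tau$ these ranges already exhaust $[0,t)$, so $b$ cannot transmit at all, consistently with $\tau\on=0$ being forced by $\tau\on\in[0,(\kappa-\tau)_+]=\{0\}$.

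Second, I would condition on the $\sigma$-algebra $\mathcal H$ generated by the edge-update point process of the ordered pair $(a,b)$ (with its resampled values), the infection time $\inf\{\sigma:b\in\I(\sigma)\}$, and the recovery time of $b$. This $\mathcal H$ determines the whole path $(1_{(a,b)}(\sigma))_{0\le\sigma\le t}$ and the infected-status path of $b$, hence $K$, $T$, $T\on$, $\{b\in\I(t^-)\}$, and $\{(a,b)\notin E(t)\}$; in particular $\mathcal C\in\mathcal H$. The crucial point is that the rate-$B_{ij}$ clock of potential transmissions $b\to a$ is independent of $\mathcal H$: that clock never enters the update dynamics of the edge $(a,b)$, the recovery of $b$, or the infection of $b$ (which involves only edges into $b$ and transmission clocks directed at $b$, all distinct from the $b\to a$ clock). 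Thinning this clock by the $\mathcal H$-measurable set $\{\sigma\in[0,t):1_{(a,b)}(\sigma)=1,\ b\in\I(\sigma)\}$ — the continuous-time analogue of the discretization in Lemma~\ref{lem:obvious_two} — yields
\[
\Pr\big(b\stackrel{0,t}{\centernot\rightsquigarrow}a\mid\mathcal H\big)=\exp\Big(-B_{ij}\!\int_0^t 1_{(a,b)}(\sigma)\,1\{b\in\I(\sigma)\}\,d\sigma\Big)\quad\text{a.s.}
\]
By the first step, on $\mathcal C$ the integrand is supported on $[t-\kappa,t-\tau)$ and equals $1_{(a,b)}(\sigma)$ there, so the integral equals $\int_{t-\kappa}^{t-\tau}1_{(a,b)}(\sigma)\,d\sigma=T\on=\tau\on$ deterministically; hence $\Pr(b\stackrel{0,t}{\centernot\rightsquigarrow}a\mid\mathcal H)=e^{-B_{ij}\tau\on}$ on $\mathcal C$, and taking $\E[\cdot\mid\mathcal C]$ with $\mathcal C\in\mathcal H$ gives $\Pr(b\stackrel{0,t}{\centernot\rightsquigarrow}a\mid\mathcal C)=e^{-B_{ij}\tau\on}$.

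If one prefers to avoid the graphical/thinning language, the displayed identity can be obtained directly by the discretization used in Lemma~\ref{lem:obvious_two}: partition $[t-\kappa,t-\tau)$ into $N$ pieces of length $\Delta=(\kappa-\tau)/N$, use the model's ``independent transmissions over vanishing intervals'' assumption together with Lemma~\ref{lem:obvious} to write the conditional no-transmission probability on the $\ell$-th piece as $1-B_{ij}1_{(a,b)}(\sigma_\ell)\Delta+o(\Delta)$, multiply over $\ell$, and let $N\to\infty$. I expect the main obstacle to be the careful bookkeeping in the first step — the right-continuity and probability-zero coincidences at the endpoints $t-\tau$ and $t$, the degenerate case $\kappa\le\tau$, and most of all the verification that conditioning on $b\in\I(t^-)$ and on the edge being off at time $t$ does not alter the law of the $b\to a$ transmission clock, i.e., exhibiting a $\sigma$-algebra that contains $\mathcal C$ yet stays independent of that clock.
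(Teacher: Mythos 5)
Your proposal is correct and follows essentially the same route as the paper's proof: both reduce the no-transmission event to the window $[t-\kappa,t-\tau)$, condition on the realized path of $1_{(a,b)}$ (you via the $\sigma$-algebra $\mathcal H$, the paper via the flip count and flip times $(C,T_1,\ldots,T_C)$), and obtain the survival probability $e^{-B_{ij}\tau\on}$ from the per-interval computation of Lemma~\ref{lem:obvious_two}. The only cosmetic difference is the treatment of $\{b\in\I(t^-)\}$: you place $b$'s recovery time inside the conditioning $\sigma$-algebra, whereas the paper computes the joint conditional probability of no transmission and non-recovery as $e^{-B_{ij}\tau\on}e^{-\gamma_j(\kappa-\tau)}$ and then divides out the non-recovery factor $e^{-\gamma_j(\kappa-\tau)}$.
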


\begin{proof}
We first show that $\left\{b\stackrel{t-\kappa,\kappa-\tau}{\centernot\rightsquigarrow}a\right\}$ is conditionally independent of $\{(a,b)\notin E(t)\}$  given $(K,T,T\on)=(\kappa,\tau,\tau\on)$ and $b\in\I(t-\tau)$:
\begin{align}\label{eq:cond_indep}
    &\Pr\left((a,b)\in E(t)\,\bigg\lvert\, (K,T,T\on)=(\kappa,\tau,\tau\on), b\in\I(t-\tau),b\stackrel{t-\kappa,\kappa-\tau}{\centernot\rightsquigarrow}a \right)\cr
    &\stackrel{(a)}{=}\Pr\left((a,b)\in E(t-\tau)\,\bigg\lvert\, (K,T,T\on)=(\kappa,\tau,\tau\on), b\in\I(t-\tau),b\stackrel{t-\kappa,\kappa-\tau}{\centernot\rightsquigarrow}a \right)\cr
    &\stackrel{(b)}{=}\frac{\rho_{ij} }{n}\cr
    &=\Pr\left((a,b)\in E(t-\tau)\mid T=\tau\right)\cr
    &\stackrel{(c)}{=}\Pr((a,b)\in E(t)\mid T=\tau ),
\end{align}
where $(a)$ and $(c)$ hold because $1_{(a,b)}$ is not updated during the interval $[t-\tau, t)$, and (b) follows from the modeling assumption that the probability of the edge $(a,b)$ existing in the network following an edge state update  is $\frac{\rho_{ij}
}{n}$ (independent of the past states $\{\mathbf X(\tau'):0\leq \tau'< t-\tau\}$), the fact that $\{b\in\I(t-\tau)\}=\{b\in\I((t-\tau)^- )\}$ almost surely, and from the observation that $t-\tau$ is an update time for $1_{(a,b)}$ given $T=\tau$.

In view of~\eqref{eq:cond_indep}, the definitions of $K$, $T$, and $T\on$ imply that 
\begin{align}\label{eq:start_now}
    &\Pr\left(b\stackrel{0,t }{\centernot\rightsquigarrow}a\,\bigg\lvert\, (K,T,T\on)=(\kappa,\tau,\tau\on), b\in\I(t^-), (a,b)\notin E(t)\right)\cr
    &=\Pr\left(b\stackrel{t-\kappa,\kappa-\tau}{\centernot\rightsquigarrow}a\,\bigg\lvert\, (K,T,T\on)=(\kappa,\tau,\tau\on), b\in\I(t^-), (a,b)\notin E(t)\right)\cr
    &=\Pr\left(b\stackrel{t-\kappa,\kappa-\tau}{\centernot\rightsquigarrow}a\,\bigg\lvert\, (K,T,T\on)=(\kappa,\tau,\tau\on), b\in\I(t^-) \right)\cr
    &=\Pr\left(b\stackrel{t-\kappa,\kappa-\tau}{\centernot\rightsquigarrow}a\,\bigg\lvert\, (K,T,T\on)=(\kappa,\tau,\tau\on), b\in\I(t-\tau), b\notin\cup_{\tau'\in (t-\tau,t)} \mathcal R(\tau') \right)\cr
    &=\Pr\left(b\stackrel{t-\kappa,\kappa-\tau}{\centernot\rightsquigarrow}a\,\bigg\lvert\, (K,T,T\on)=(\kappa,\tau,\tau\on), b\in\I(t-\tau)\right),
\end{align}
where the last step holds because $\Q(\x,\x_{\recover b})=\gamma_j$ for all $\x\in\mathbb S$ satisfying $x_b=1$, which implies that, given $b\in\I(t-\tau)$ and any other conditioning event,  node $b$ recovers during $(t-\tau,t)$ at a constant rate of $\gamma_j$ independently of all past edge states and past disease states (and therefore independently of past transmissions as well). Hence, $\left\{b\stackrel{t-\kappa,\kappa-\tau}{\centernot\rightsquigarrow}a\right\}$ and $\{b\notin \cup_{\tau'\in(t-\tau,t)}\mathcal R(\tau') \}$ are conditionally independent given $(K,T,T\on)=(\kappa,\tau,\tau\on)$ and $b\in\I(t-\tau)$.

We now evaluate the right-hand side of~\eqref{eq:start_now} as follows. Let $C$ denote the (random) number of times $1_{(a,b)}$ flips (changes) during $[t-K, t-T]$, and let the times of these changes be $T_1<\cdots<T_C$. We assume that $C$ is even (as the case of $C$ being odd is handled similarly) and that $1_{(a,b)}(\tau')=0$ for $\tau'\in[t-K,T_1]$ (the case $1_{(a,b)}(\tau')=1$ for $\tau'\in[t-K,T_1]$ is handled similarly). Then, for a given $c\in\N$ and a collection of times $t_1, \ldots, t_c, \tau\on$, we have $\{C=c,T_1=t_1,\ldots, T_c=t_c\}\subset \{T\on=\tau\on\}$ iff $\sum_{k=1}^{c/2} (t_{2k} - t_{2k-1})=\tau\on$. Suppose this condition holds. Then, observe that
\begin{align}\label{eq:infect_and_recover_prob}
    &\Pr\left(b\stackrel{t-\kappa,\kappa-\tau}{\centernot\rightsquigarrow}a , b\in\I(t-\tau)\, \bigg\lvert\, (K,T,T\on)=(\kappa,\tau,\tau\on), C=c, (T_1,\ldots, T_c)=(t_1,\ldots, t_c)\right)\cr
    &=\Pr\left(b\stackrel{t-\kappa,\kappa-\tau}{\centernot\rightsquigarrow}a, b\in\I(t-\tau)\, \bigg\lvert\, (K,T,T\on)=(\kappa,\tau,\tau\on), 1_{(a,b)}(\tau')=1\text{ iff }\tau'\in \cup_{k=1}^{c/2} [t_{2k-1},t_{2k}] \right)\cr
    &\stackrel{(a)}{=}\prod_{k=1}^{c/2}\Pr\left ( b\stackrel{t_{2k-1}, t_{2k}-t_{2k-1} }{\centernot\rightsquigarrow}a, b\in \I(t_{2k})\,\bigg\lvert\, 1_{(a,b)}(\tau')=1\,\forall\,\tau'\in[t_{2k-1},t_{2k}], b\in \I(t_{2k-1} ) \right) \cr
    &\quad\times\prod_{k=1}^{c/2+1}\Pr\left ( b\stackrel{t_{2k-2}, t_{2k-1}-t_{2k-2} }{\centernot\rightsquigarrow}a, b\in \I(t_{2k-1})\,\bigg\lvert\, 1_{(a,b)}(\tau')=0\,\forall\,\tau'\in[t_{2k-2},t_{2k-1}], b\in \I(t_{2k-1} ) \right) \cr
    &\stackrel{(b)}{=}\prod_{k=1}^{c/2}e^{-(B_{ij}+\gamma_j)(t_{2k}-t_{2k-1})}\times\prod_{k=1}^{c/2+1} e^{-\gamma_j (t_{2k-1}-t_{2k-2})}\cr
    &=e^{-B_{ij}\sum_{k=1}^{c/2}(t_{2k}-t_{2k-1})}\cdot e^{-\gamma_j\sum_{k=1}^{c+1}(t_k-t_{k-1})} \cr
    &= e^{-B_{ij}\tau\on} e^{-\gamma_j(\kappa-\tau)},
\end{align}
where $(b)$ follows from Lemma~\ref{lem:obvious_two}, and $(a)$ follows from the following fact: since the definition of our epidemic model implies that the rate of pathogen transmission from $b$ to $a$ at any time instant $t'$ depends only on $1_{(a,b)}(t')$ and the disease state of $b$ at time $t'$, transmission events corresponding to disjoint time intervals are conditionally independent if we are given $1_{(a,b)}$ and the disease state of $b$ as functions of time.

On the other hand, we have
\begin{align}\label{eq:recovery_probability}
    &\Pr(b\in \I(t-\tau)\mid (K,T,T\on)=(\kappa,\tau,\tau\on), C=c, (T_1, \ldots, T_c)=(t_1,\ldots, t_c))\cr
    &=\Pr(b\notin \R(t-\tau)\mid (K,T,T\on)=(\kappa,\tau,\tau\on), C=c, (T_1, \ldots, T_c)=(t_1,\ldots, t_c))\cr
    &=e^{-\gamma_j((t-\tau) -(t-\kappa)) }\cr
    &=e^{-\gamma_j(\kappa-\tau)},
\end{align}
where the second equality holds because our model assumes that the rate of recovery of an infected node is time-invariant and independent of all the edge states and the disease states of other nodes (precisely, $\Q(\x,\x_{\recover b})=\gamma_j$ for all $\x\in\mathbb S$ such that $b\in \I(\x)$).

As a result of~\eqref{eq:infect_and_recover_prob} and~\eqref{eq:recovery_probability}, we have
\begin{align*}
    &\Pr\left(b\stackrel{t-\kappa, \kappa-\tau}{\centernot\rightsquigarrow}a\,\bigg\lvert\, (K,T,T\on)=(\kappa,\tau,\tau\on), b\in\I(t-\tau), (C, T_1,\ldots, T_C)=(c,t_1,\ldots, t_c) \right)\cr
    &=\frac{ \Pr\left(b\stackrel{t-\kappa, \kappa-\tau}{\centernot\rightsquigarrow}a,b\in\I(t-\tau)\,\bigg\lvert\, (K,T,T\on)=(\kappa,\tau,\tau\on),(C, T_1,\ldots, T_C)=(c,t_1,\ldots, t_c)\right)  }{ \Pr\left(b\in\I(t-\tau)\,\bigg\lvert\, (K,T,T\on)=(\kappa,\tau,\tau\on),(C, T_1,\ldots, T_C)=(c,t_1,\ldots, t_c)\right)  }\cr
    &= e^{-B_{ij}\tau\on}.
\end{align*}
Since $(c,t_1,\ldots, t_c)$ was an arbitrary tuple satisfying $\{(C,T_1,\ldots, T_C)=(c,t_1,\ldots, t_c)\}\subset\{T\on=\tau\on\}$, it follows that $$
    \Pr\left(b\stackrel{t-\kappa, \kappa-\tau}{\centernot\rightsquigarrow}a\,\bigg\lvert\, (K,T,T\on)=(\kappa,\tau,\tau\on), b\in\I(t-\tau)\right) = e^{-B_{ij}\tau\on}.
$$
Invoking~\eqref{eq:start_now} now completes the proof.
\end{proof}

Observe that in the above proof, given that $K=\kappa$ and that $(a,b)\notin E(t)$, $(C,T_1,\ldots, T_C)$  uniquely determines $\{1_{(a,b)}(\tau):t-K\leq \tau\leq t\}$. Therefore, as an implication of the above proof, we have
$$
    \Pr\left(b\stackrel{t-K,t}{\centernot\rightsquigarrow}a\mid K=\kappa, \{1_{(a,b)}(\tau):t-K\leq \tau\leq t\}, b\in\I(t^-),(a,b)\notin E(t)\right)=e^{-B_{ij}T\on}.
$$
The dependence on the random variable $T\on$ holds because $T\on$ is a function of $\{1_{(a,b)}(\tau):t-K\leq \tau\leq t\}$. By invoking Markovity, this result can be extended to 
$$
    \Pr\left(b\stackrel{t-K,t}{\centernot\rightsquigarrow}a\mid K=\kappa, \{1_{(a,b)}(\tau):0\leq \tau\leq t\}, b\in\I(t^-),(a,b)\notin E(t)\right)=e^{-B_{ij}T\on},
$$
which is equivalent to the following lemma.

\begin{lemma}\label{lem:T_on_general}
Let $T\on:=\int_{t-K}^{t-T} 1_{(a,b)}(\sigma)d\sigma$ denote the total duration of time for which the edge $(a,b)$ exists in the network during $[t-K,t-T]$. Then, for all $\kappa\in [0,t]$, we have
$$
    \Pr\left(b\stackrel{0,t }{\centernot\rightsquigarrow}a\,\bigg\lvert\, K=\kappa, \{1_{(a,b)}(\tau):0\leq \tau\leq t\}, b\in\I(t^-), (a,b)\notin E(t)\right) = e^{-B_{ij}T\on}.
$$
\end{lemma}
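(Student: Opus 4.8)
The plan is to deduce this lemma from the conditional computation already carried out inside the proof of Lemma~\ref{lem:T_on}, by adding exactly two small ingredients: replacing the transmission window $[t-K,t-T]$ by $[0,t]$, and enlarging the conditioning from the edge-path segment $\{1_{(a,b)}(\tau):t-K\le\tau\le t\}$ to the whole path $\{1_{(a,b)}(\tau):0\le\tau\le t\}$. First I would observe that on $\{b\in\I(t^-)\}$ node $b$ is susceptible throughout $[0,t-K)$ and infected throughout $[t-K,t)$ (by the definition of $K$ and because an infected node stays infected until it recovers); since a susceptible node transmits no pathogens, $\{b\stackrel{0,t}{\centernot\rightsquigarrow}a\}=\{b\stackrel{t-K,t}{\centernot\rightsquigarrow}a\}$ on this event, so it suffices to compute the conditional probability of $\{b\stackrel{t-K,t}{\centernot\rightsquigarrow}a\}$. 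I would also record that on $\{(a,b)\notin E(t)\}$ one has $1_{(a,b)}\equiv 0$ on $[t-T,t]$ (no update of $1_{(a,b)}$ occurs on $(t-T,t]$ and its value at $t$ is $0$), so that $T\on=\int_{t-K}^{t}1_{(a,b)}(\sigma)\,d\sigma$ is a deterministic function of $\{1_{(a,b)}(\tau):0\le\tau\le t\}$ and $\kappa$; in particular $e^{-B_{ij}T\on}$ is measurable with respect to the conditioning $\sigma$-algebra, so the statement is well posed.

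Next I would extract a strengthened form of Lemma~\ref{lem:T_on}. That lemma's proof proceeds by conditioning not only on $(K,T,T\on)$ but on the number and locations $(C,T_1,\dots,T_C)$ of the flips of $1_{(a,b)}$ on $[t-K,t-T]$, and shows via equations~\eqref{eq:infect_and_recover_prob} and~\eqref{eq:recovery_probability} (which rest on Lemma~\ref{lem:obvious_two}) that the conditional probability equals $e^{-B_{ij}T\on}$. Given $\{K=\kappa\}$ and $\{(a,b)\notin E(t)\}$, this flip data is exactly equivalent to knowing the segment $\{1_{(a,b)}(\tau):t-K\le\tau\le t\}$ — it is piecewise constant with the prescribed flips on $[t-K,t-T]$ and identically $0$ on $[t-T,t]$ — and $T$, $T\on$ are functions of it. Hence that computation already yields
$$
\Pr\!\left(b\stackrel{t-K,t}{\centernot\rightsquigarrow}a \;\Big|\; K=\kappa,\ \{1_{(a,b)}(\tau):t-K\le\tau\le t\},\ b\in\I(t^-),\ (a,b)\notin E(t)\right)=e^{-B_{ij}T\on}.
$$

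Finally I would upgrade the conditioning to the full path. Put $\tilde K:=t-K=\inf\{\tau:b\in\I(\tau)\}$, a stopping time of $\{\X(\tau)\}$ which on $\{K=\kappa\}$ equals the deterministic time $t-\kappa$. On $\{b\in\I(t^-)\}$ the event $\{b\stackrel{t-K,t}{\centernot\rightsquigarrow}a\}$ together with $\{1_{(a,b)}(\tau):\tilde K\le\tau\le t\}$, $\{b\in\I(t^-)\}$ and $\{(a,b)\notin E(t)\}$ are functionals of the post-$\tilde K$ trajectory, whereas $\{1_{(a,b)}(\tau):0\le\tau<\tilde K\}$ is $\mathcal{F}_{\tilde K}$-measurable; so, by the strong Markov property, the conditional probability of the former given $\mathcal{F}_{\tilde K}$ depends only on $\X(\tilde K)$. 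Re-inspecting the proof of Lemma~\ref{lem:T_on} — it uses Lemma~\ref{lem:obvious_two}, which conditions merely on the current state lying in a set of states and is therefore stable under further conditioning on the exact current state — that conditional probability in fact depends on $\X(\tilde K)$ only through the disease state of $b$, because transmission along $(a,b)$ is an independent rate-$B_{ij}$ Poisson clock that is active exactly while $b$ is infected and $(a,b)$ exists (this is the same phenomenon recorded in Remark~\ref{rem:tilde_K}). Consequently conditioning additionally on $\{1_{(a,b)}(\tau):0\le\tau<\tilde K\}$ does not change the value $e^{-B_{ij}T\on}$; averaging out $\X(\tilde K)$ and using the first step gives the claim. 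I expect this last step to be the main obstacle: the conditioning events $\{K=\kappa\}$, $\{b\in\I(t^-)\}$, $\{(a,b)\notin E(t)\}$ straddle the random time $\tilde K$, so the Markov argument cannot be a one-line appeal — it has to be organized as ``condition on $\X(\tilde K)$, apply the strong Markov property there, then verify the answer is free of $\X(\tilde K)$'', which is precisely the ``by invoking Markovity'' remark preceding the lemma made rigorous.
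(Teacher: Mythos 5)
Your proposal is correct and follows essentially the same route as the paper: the paper's own argument (the remark immediately preceding the lemma) likewise observes that conditioning on the flip data $(C,T_1,\ldots,T_C)$ in the proof of Lemma~\ref{lem:T_on} is equivalent to conditioning on the segment $\{1_{(a,b)}(\tau):t-K\le\tau\le t\}$, and then extends the conditioning to the full path ``by invoking Markovity.'' Your elaboration of that last step --- applying the strong Markov property at $\tilde K$ and checking that the resulting conditional probability depends on $\X(\tilde K)$ only through the disease state of $b$ and the edge path, so that the extra pre-$\tilde K$ conditioning is harmless --- is precisely the content the paper leaves implicit.
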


\begin{lemma}\label{lem:equation_only} Recall from Lemma~\ref{lem:T_on_general} that $T\on=\int_{t-K}^{t-T} 1_{(a,b)}(\sigma)d\sigma$. Then for all $\kappa,\tau\in [0,t]$, we have
\begin{align*}
    &\Pr\left((\S(t),\I(t))=(\S_0,\I_0)\,\Big \lvert\, (K,T,T\on)=(\kappa,\tau,\tau\on), b\stackrel{0,t  }{ \centernot \rightsquigarrow }a, (a,b)\notin E(t), b\in \I(t^-) \right)\cr
    &= \Pr\left((\S(t),\I(t))=(\S_0,\I_0)\,\Big \lvert\, K=\kappa, b\stackrel{0,t  }{ \centernot \rightsquigarrow }a, (a,b)\notin E(t), b\in\I(t^-) \right).
\end{align*}
\end{lemma}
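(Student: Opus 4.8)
The plan is to prove the (superficially stronger, but in fact equivalent) statement that, conditional on $\{K=\kappa\}\cap\left\{b\stackrel{0,t}{\centernot\rightsquigarrow}a\right\}\cap\{(a,b)\notin E(t)\}\cap\{b\in\I(t^-)\}$, the state $(\S(t),\I(t))$ is independent of $(T,T\on)$; the displayed identity then follows by averaging over $(T,T\on)$, since the conditional-on-$(T,T\on)$ law of $(\S(t),\I(t))$ turns out not to depend on the conditioning value. Because $T$ and $T\on$ are deterministic functions of $\kappa$ together with the ``edge process of $(a,b)$'', namely $\mathcal U_{ab}:=\{(X_{\langle a,b\rangle}(\sigma),X_{n^2+\langle a,b\rangle}(\sigma)):\sigma\in[0,t]\}$, it suffices to show that $(\S(t),\I(t))$ is conditionally independent of $\mathcal U_{ab}$ given those four events. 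The structural fact driving everything is that $1_{(a,b)}$ enters the generator $\Q$ only through a single transition rate --- the rate of the infection transition $\x\mapsto\x_{\infect a}$ contains the summand $B_{ij}1_{(a,b)}(\x)$ when $b\in\I_j(\x)$ and nothing else --- equivalently, $1_{(a,b)}$ affects the dynamics only through the pathogen emissions $b\stackrel{\sigma}{\rightsquigarrow}a$, which the event $N:=\left\{b\stackrel{0,t}{\centernot\rightsquigarrow}a\right\}$ forbids.

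Concretely, I would introduce the ``$(a,b)$-deleted'' process $\X^{-ab}$, run from the same initial state under the same model but with transmissions along the single edge $(a,b)$ suppressed, and couple it to $\X$ by sharing every Poisson clock of the model: the edge-update clocks, the recovery clocks, and the pathogen-emission clocks of all edges, in particular the emission clock $\Phi_{ab}$ of $(a,b)$ and the update clock generating $\mathcal U_{ab}$. By construction the disease-state trajectory of $\X^{-ab}$ and the edge-state trajectories of all pairs other than $(a,b)$ never consult $1_{(a,b)}$, hence are independent of the pair $(\Phi_{ab},\mathcal U_{ab})$ (which in turn are independent of each other, being autonomous Poisson mechanisms). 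A standard ``agree until the first discrepancy'' argument shows that on $N$ the processes $\X$ and $\X^{-ab}$ have identical disease-state trajectories on $[0,t)$: the two can only diverge at a pathogen emission of $\Phi_{ab}$ occurring while the edge is present and $b$ is infected (and $a$ susceptible), and $N$ rules out exactly such emissions. Consequently, on $N$ the quantities $K$, $\{b\in\I(t^-)\}$ and $(\S(t),\I(t))$ coincide with their $\X^{-ab}$-analogues $K^{-ab}$, $\{b\in\I^{-ab}(t^-)\}$, $(\S^{-ab}(t),\I^{-ab}(t))$, and $N$ itself may be rewritten as ``$\Phi_{ab}$ has no point in $\{\sigma\in[0,t):1_{(a,b)}(\sigma)=1,\ b\in\I^{-ab}(\sigma)\}$'', i.e.\ as a function of $(\Phi_{ab},\mathcal U_{ab},\X^{-ab})$ alone.

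With this in hand I would expand the left-hand conditional probability as a ratio of two joint probabilities and evaluate each by first conditioning on $(\mathcal U_{ab},\X^{-ab})$ and integrating out the independent rate-$B_{ij}$ clock $\Phi_{ab}$. On the conditioning events one has $\{b\in\I^{-ab}(\sigma)\}=\{\sigma\in[t-\kappa,t)\}$ (from $K^{-ab}=\kappa$, $b\in\I^{-ab}(t^-)$), while $\{(a,b)\notin E(t)\}$ together with $\{T=\tau\}$ forces $1_{(a,b)}\equiv0$ on $[t-\tau,t)$, so the relevant Lebesgue measure equals $\int_{t-\kappa}^{t-\tau}1_{(a,b)}(\sigma)\,d\sigma=T\on$ and $\Pr(N\mid\mathcal U_{ab},\X^{-ab})=e^{-B_{ij}T\on}$ --- which is precisely the content of Lemma~\ref{lem:T_on_general}. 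The factor $e^{-B_{ij}\tau\on}$ is common to numerator and denominator and cancels; using $\mathcal U_{ab}\perp\X^{-ab}$ the remaining $\mathcal U_{ab}$-dependent factor (the probability of $\{T=\tau,(a,b)\notin E(t)\}$ intersected with the $\mathcal U_{ab}$-event $\{\int_{t-\kappa}^{t-\tau}1_{(a,b)}=\tau\on\}$) also cancels, leaving $\Pr\big((\S^{-ab}(t),\I^{-ab}(t))=(\S_0,\I_0)\mid K^{-ab}=\kappa,\ b\in\I^{-ab}(t^-)\big)$, which involves neither $T$ nor $T\on$. Averaging this constant over $(T,T\on)$ identifies it with the right-hand side, finishing the proof.

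I expect the main obstacle to be the rigorous construction of the coupled deleted process and, relatedly, the measure-theoretic bookkeeping around conditioning on the continuous variables $(T,T\on)$ and on the probability-zero configurations involved --- the same care already exercised via $\delta$-approximations in Lemmas~\ref{lem:delta_approx}, \ref{lem:simple_but_difficult}, \ref{lem:T_on} and \ref{lem:T_on_general}. An alternative that avoids a new process is to stay inside the $(a,b)$-agnostic formalism: invoke Remark~\ref{rem:tilde_K} for the interval $[0,\tilde K]$ (where $b$ is not yet infected, so $1_{(a,b)}$ is dynamically inert and $(\XX(\tilde K),\tilde K)$ is already independent of $\{1_{(a,b)}(\sigma):0\le\sigma\le t\}$), then run an argument in the spirit of Lemma~\ref{lem:simple_but_difficult} over $[\tilde K,t]$ showing that, \emph{conditionally on $N$}, the agnostic-superstate transition rates there do not involve $1_{(a,b)}$ --- the only $1_{(a,b)}$-dependent transition being the $b\to a$ infection, which $N$ excludes --- and finally cancel the $e^{-B_{ij}T\on}$ factors using Lemma~\ref{lem:T_on_general} exactly as above.
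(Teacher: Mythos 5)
Your primary argument is correct and takes a genuinely different route from the paper's. The paper stays entirely inside the generator/$(a,b)$-agnostic-superstate formalism: it conditions on the whole edge trajectory $\{1_{(a,b)}(\tau):0\le\tau\le t\}$ together with $\{(a,b)\notin E(t)\}$, shows via Remark~\ref{rem:tilde_K} and a rerun of the Lemma~\ref{lem:simple_but_difficult} rate computation (now over $[\tilde K,t]$, where the no-transmission event removes the only $1_{(a,b)}$-dependent rate) that the conditional law of $(\S(t),\I(t))$ given $\X(\tilde K)\in\mathbb Y$, $K$, $N$, $b\in\I(t^-)$ does not involve the edge history, multiplies in the factors $e^{-\gamma_j(t-\kappa)}$ and $e^{-B_{ij}T\on}$ from Lemmas~\ref{lem:T_on_general} and the recovery-rate assumption, sums over agnostic superstates $\mathbb Y$, and cancels; the conclusion then follows because $T$ and $T\on$ are measurable with respect to the edge history. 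Your deleted-process coupling replaces that direct rate bookkeeping with a graphical-representation argument: the shared-clock construction plus the ``first discrepancy'' observation makes the identity of $(\S(t),\I(t))$, $K$ and $\{b\in\I(t^-)\}$ with their $(a,b)$-deleted analogues on $N$ immediate, and the factorization $\Phi_{ab}\perp\mathcal U_{ab}\perp\X^{-ab}$ does in one stroke what the paper's superstate-by-superstate computation does incrementally; the $e^{-B_{ij}T\on}$ cancellation is identical in both proofs (and is exactly Lemma~\ref{lem:T_on_general}). What your route buys is conceptual transparency and a statement that is visibly stronger (conditional independence from the whole of $\mathcal U_{ab}$, not just $(T,T\on)$ --- though the paper in fact also proves this stronger form); what it costs is the burden you already flag, namely a rigorous construction of the coupled process and of the conditional densities for the null events $\{(K,T,T\on)=(\kappa,\tau,\tau\on)\}$, which the paper discharges through its $\delta$-approximation machinery and which your write-up defers. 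Your sketched alternative at the end is, for all practical purposes, the paper's own proof.
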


\begin{proof}
We first examine the following conditional probability for an arbitrary $(a,b)$-agnostic superstate $\mathbb Y$:
$$
    \Pr\left((\S(t),\I(t))=(\S_0,\I_0), \X(\tilde K)\in\mathbb Y, K= \kappa, b\stackrel{0,t  }{ \centernot \rightsquigarrow }a, b\in \I(t^-) \mid \{1_{(a,b)}(\tau):0\leq \tau\leq t\}, (a,b)\notin E(t) \right).
$$
To begin, note that the proof of Lemma~\ref{lem:simple_but_difficult}, Remark~\ref{rem:tilde_K}, and the fact that $K$ is a function of $\tilde K$ together imply that
\begin{align*}
    &f_{K\mid \{1_{(a,b)}(\tau):0\leq \tau\leq t\}, (a,b)\notin E(t)}( \kappa ) = f_K(\kappa)
\end{align*}
and that 
\begin{align}\label{eq:to_be_used}
    \Pr(\X(\tilde K)\in\mathbb Y\mid K=\kappa, \{1_{(a,b)}(\tau):0\leq \tau\leq t\},(a,b)\notin E(t) )=\Pr(\X(\tilde K)\in\mathbb Y\mid K=\kappa).
\end{align}
Next, for the event $\{b\in\I(t^-)\}$, we have
\begin{align}\label{eq:to_be_used_2}
    &\Pr\left(b\in\I(t^-)\mid  K= \kappa,\X(\tilde K)\in\mathbb Y,\{1_{(a,b)}(\tau):0\leq \tau\leq t\},(a,b)\notin E(t)\right) \cr
    &\stackrel{(a)}= e^{-\gamma_j(t-\kappa)} \cr
    &\stackrel{(b)}= \Pr\left(b\in\I(t^-)\mid   K=\kappa\right) 
\end{align}
where $(a)$ and $(b)$ follow from our modelling assumption that $\Q(\x,\x_{\recover b})=\gamma_j$ for all $\x$ satisfying $x_b=1$, which means that the recovery time of $b$ depends only on the time of infection of $b$ and is conditionally independent of all other disease states and all the edge states.
Similarly, we have
\begin{align}\label{eq:to_be_used_3}
    &\Pr\left(b\stackrel{0,t}{\centernot\rightsquigarrow }a\mid \X(\tilde K)\in\mathbb Y, K= \kappa, \{1_{(a,b)}(\tau):0\leq \tau\leq t\},(a,b)\notin E(t),b\in\I(t^-)\right)\cr
    &\stackrel{(a)}=\Pr\left(b\stackrel{0,t}{\centernot\rightsquigarrow}a\mid   K= \kappa, \{1_{(a,b)}(\tau):0\leq \tau\leq t\},(a,b)\notin E(t),b\in\I(t^-)\right) \cr
    &\stackrel{(b)}= e^{-B_{ij}T\on } \cr
\end{align}
where $(a)$ follows from our modelling assumptions, which imply that the rate of infection transmission along an edge depends only on the edge state of the transmitting edge and the disease state of the transmitting node and is conditionally independent of other disease states and edge states (which are captured by the $(a,b)$-agnostic superstate of the chain) and $(b)$ follows from Lemma~\ref{lem:T_on_general}. Note that $T\on$ is a function of $T$ and hence also of $\{1_{(a,b)}(\tau):0\leq \tau\leq t\}$.

It remains for us to analyze 
$$
    \Pr\left((\S(t),\I(t))=(\S_0,\I_0) \mid \X(\tilde K)\in\mathbb Y,K= \kappa, b\stackrel{0,t  }{ \centernot \rightsquigarrow }a, b\in \I(t^-) , \{1_{(a,b)}(\tau):0\leq \tau\leq t\}, (a,b)\notin E(t) \right).
$$
To do so, we first let $L_N$ denote the time of the first $(a,b)$-agnostic jump to occur after $b $ gets infected, i.e., $N:=\inf\{\ell\in\N:L_\ell\geq \tilde K\}$, and we note the following: given the conditioning events and variables above (including the event that $b$ does not infect $a$ during $[0,t]$), the total conditional rate at which $a$ receives pathogens at any time $\tau\leq L_N$ is
$$
\sum_{q=1}^m\sum_{d\in\I_q(\X(\tilde K )\setminus\{b\}}B_{iq}1_{(a,d)}(\X(\tilde K) ),
$$
which is determined uniquely by $\mathbb Y$, the $(a,b)$-agnostic superstate of the chain at time $\tilde K$. Therefore, this rate is conditionally independent of $1_{(a,b)}(\tau)$ for any $\tau$. Similarly, for all age groups $\ell\in[m]$, given the conditioning events and variables above, the conditional rate at which a node $d\in\I_\ell(\X(\tilde K))$ recovers, which equals $\gamma_\ell$, and the total conditional rate at which a node $c\in\A_\ell\setminus\{a\}$ receives pathogens, which equals $\sum_{q=1}^m\sum_{d\in\I_q(\X(\tilde K) )} B_{\ell q}1_{(c,d)}(\X(\tilde K))$, are both conditionally independent of $1_{(a,b)}(\tau)$ given that $\X(\tilde K)\in\mathbb Y$. Therefore, by using arguments similar to those made in the proof of Lemma~\ref{lem:simple_but_difficult}, we can show that $(\S(L_N),\I(L_N))$ is conditionally independent of $\{1_{(a,b)}(\tau):0\leq \tau\leq t\}$ given the rest of the conditioning events and variables. Moreover, by repeating the above for subsequent $(a,b)$-agnostic jumps, we can generalize this conditional independence assertion to $(\S(t),\I(t))$, which means that
\begin{align}\label{eq:hopefully_last}
    \Pr\Big((\S(t&),\I( t))=(\S_0,\I_0) \mid \X(\tilde K)\in\mathbb Y,K= \kappa, b\stackrel{0,t  }{ \centernot \rightsquigarrow }a, b\in \I(t^-) , \{1_{(a,b)}(\tau):0\leq \tau\leq t\}, (a,b)\notin E(t) \Big)\nonumber\\
    &=\Pr\left((\S(t),\I(t))=(\S_0,\I_0) \mid \X(\tilde K)\in\mathbb Y,K= \kappa, b\stackrel{0,t  }{ \centernot \rightsquigarrow }a, b\in \I(t^-) , (a,b)\notin E(t) \right).
\end{align}

Combining~\eqref{eq:to_be_used},~\eqref{eq:to_be_used_2},~\eqref{eq:to_be_used_3} and~\eqref{eq:hopefully_last} now yields
\begin{align*}
    &\Pr\left((\S(t),\I(t))=(\S_0,\I_0), \X(\tilde K)\in\mathbb Y, b\stackrel{0,t  }{ \centernot \rightsquigarrow }a, b\in \I(t^-) \mid K= \kappa, \{1_{(a,b)}(\tau):0\leq \tau\leq t\}, (a,b)\notin E(t) \right)\nonumber\\
    &=\Pr\left((\S(t),\I(t))=(\S_0,\I_0) \mid \X(\tilde K)\in\mathbb Y,K= \kappa, b\stackrel{0,t  }{ \centernot \rightsquigarrow }a, b\in \I(t^-) , (a,b)\notin E(t) \right)\cr
    &\quad\times e^{-B_{ij}T\on} \times \Pr(b\in \I(t^-)\mid K=\kappa)\times\Pr(\X(\tilde K)\in\mathbb Y\mid K=\kappa)
\end{align*}
Summing both the sides of the above equation over the space of all $(a,b)$-agnostic superstates $\mathbb Y$ gives
\begin{align}\label{eq:numerator}
    &\Pr\left((\S(t),\I(t))=(\S_0,\I_0), b\stackrel{0,t  }{ \centernot \rightsquigarrow }a, b\in \I(t^-) \mid K= \kappa, \{1_{(a,b)}(\tau):0\leq \tau\leq t\}, (a,b)\notin E(t) \right)\nonumber\\
    &= e^{-B_{ij}T\on} \times \Pr(b\in \I(t^-)\mid K=\kappa)\cr
    &\quad\times \sum_{\mathbb Y}\Bigg( \Pr\left((\S(t),\I(t))=(\S_0,\I_0) \mid \X(\tilde K)\in\mathbb Y, K= \kappa, b\stackrel{0,t  }{ \centernot \rightsquigarrow }a, b\in \I(t^-) , (a,b)\notin E(t) \right)\cr
    &\quad\quad\quad\quad\quad\cdot\Pr(\X(\tilde K)\in\mathbb Y\mid K=\kappa)\Bigg).
\end{align}

Here, we recall from our earlier arguments that
\begin{align*}
    &e^{-B_{ij}T\on} \times \Pr(b\in \I(t^-)\mid K=\kappa)\cr
    &=\Pr\left(b\stackrel{0,t}{\centernot\rightsquigarrow}a\mid   K= \kappa, \{1_{(a,b)}(\tau):0\leq \tau\leq t\},(a,b)\notin E(t),b\in\I(t^-)\right)\cr
    &\quad\times \Pr\left(b\in\I(t^-)\mid   K= \kappa, \{1_{(a,b)}(\tau):0\leq \tau\leq t\},(a,b)\notin E(t)\right)\cr
    &=\Pr\left(b\stackrel{0,t  }{ \centernot \rightsquigarrow }a, b\in \I(t^-) \mid \{1_{(a,b)}(\tau):0\leq \tau\leq t\},(a,b)\notin E(t)\right).
\end{align*}
In light of~\eqref{eq:numerator}, this means that 
\begin{align*}
    &\Pr\left((\S(t),\I(t))=(\S_0,\I_0), b\stackrel{0,t  }{ \centernot \rightsquigarrow }a, b\in \I(t^-) \mid K= \kappa, \{1_{(a,b)}(\tau):0\leq \tau\leq t\}, (a,b)\notin E(t) \right)\nonumber\\
    &= \Pr\left(b\stackrel{0,t  }{ \centernot \rightsquigarrow }a, b\in \I(t^-) \mid \{1_{(a,b)}(\tau):0\leq \tau\leq t\},(a,b)\notin E(t)\right)\cr
    &\quad\times \sum_{\mathbb Y}\Bigg( \Pr\left((\S(t),\I(t))=(\S_0,\I_0) \mid \X(\tilde K)\in\mathbb Y, K= \kappa, b\stackrel{0,t  }{ \centernot \rightsquigarrow }a, b\in \I(t^-) , (a,b)\notin E(t) \right)\cr
    &\quad\quad\quad\quad\quad\cdot\Pr(\X(\tilde K)\in\mathbb Y\mid K=\kappa)\Bigg).
\end{align*}
Dividing both the sides of this equation by $\Pr\left(b\stackrel{0,t  }{ \centernot \rightsquigarrow }a, b\in \I(t^-) \mid \{1_{(a,b)}(\tau):0\leq \tau\leq t\},(a,b)\notin E(t)\right)$ gives
\begin{align*}
    &\Pr\left((\S(t),\I(t))=(\S_0,\I_0)\mid K= \kappa, b\stackrel{0,t  }{ \centernot \rightsquigarrow }a, b\in \I(t^-), \{1_{(a,b)}(\tau):0\leq \tau\leq t\}, (a,b)\notin E(t) \right)\cr
    &= \sum_{\mathbb Y}\Bigg( \Pr\left((\S(t),\I(t))=(\S_0,\I_0) \mid \X(\tilde K)\in\mathbb Y, K= \kappa, b\stackrel{0,t  }{ \centernot \rightsquigarrow }a, b\in \I(t^-) , (a,b)\notin E(t) \right)\cr
    &\quad\quad\quad\quad\cdot\Pr(\X(\tilde K)\in\mathbb Y\mid K=\kappa)\Bigg)\cr
    &=\Pr\left((\S(t),\I(t))=(\S_0,\I_0)\mid K= \kappa, b\stackrel{0,t  }{ \centernot \rightsquigarrow }a, b\in \I(t^-),  (a,b)\notin E(t) \right),
\end{align*}
where the last step holds because the summation is independent of $\{1_{(a,b)}(\tau):0\leq \tau\leq t\}$ given that $(a,b)\notin E(t)$. We have thus shown the following: given $K= \kappa, b\stackrel{0,t  }{ \centernot \rightsquigarrow }a$, and $ b\in \I(t^-)$, the event $\{(\S(t),\I(t))= (\S_0, \I_0  )\}$ is conditionally independent of $\{1_{(a,b)}(\tau):0\leq \tau\leq t\}$. Since $T$ and $T\on$ are functions of $\{1_{(a,b)}(\tau):0\leq \tau\leq t\}$, the assertion of the lemma follows.
\end{proof}

\subsection*{Proof of Proposition~\ref{prop:bounds}}

Before we prove Proposition~\ref{prop:bounds}, we recall that for any transition sequence $F=\{\x^{(0)}\stackrel{t_1}{\rightarrow}\x^{(1)}\stackrel{t_2}{\rightarrow}\cdots \stackrel{t_r}{\rightarrow} \x^{(r)}\stackrel{t}{\rightarrow}\x^{(r)}\}$ on a time interval $[0,t]$, the index $\Lambda_{(a,b)}(F)$
indexes the transition in which $(a.b)$ is updated for the last time during $[0,t]$ given that $F$ occurs. We now define another similar index below:
$$
    \Gamma{(a,b)}(F)=
    \begin{cases}
        \min\left\{\ell\in [r]: \x^{(\ell)}=\x^{(\ell-1)}_{\infect b} \right\} \quad &\text{if } \left\{\ell\in [r]: \x^{(\ell)}=\x^{(\ell-1)}_{\infect b} \right\}\neq\emptyset\\
        0 \quad &\text{otherwise.}
    \end{cases}
$$
Observe that $\Gamma_b(F)$ indexes the transition in which $b$ gets infected given that $F$ occurs.
\begin{proof}
Consider any realization $(\S_0, \I_0)$ of $(\S(t),\I(t))$, and let $\F$ be the set of all the transition sequences for $[0,t]$ that result in the occurrence of $\{(\S(t),\I(t))=(\S_0,\I_0)\}$, so that $\{(\S(t),\I(t))=(\S_0,\I_0)\} = \cup_{F\in\F}F$. 

Consider now any pair of nodes $(a,b)\in\A_i\cap \S_0\times \A_j\cap\I_0$ (so that we have $a\in\S_i(t)$ and $b\in\I_j(t)$ in the event that $(\S(t),\I(t)) = (\S_0,\I_0)$), and note that for any transition sequence $F\in\F$, we have $F\abcomp\in \F$, because both $F\abcomp$ and $F$ involve the same node recoveries and disease transmissions (all of which occur along edges other than $(a,b)$). Therefore, $F\abagn\subset \F$ for each $F\in\F$, and it follows that $\{(\S(t),\I(t))=(\S_0,\I_0)\}=\cup_{F\in\F}F\abagn$. 

Hence, we can derive bounds on $\chi_{ij}(t)$ (defined to be $\Pr((a,b)\in E(t)\mid \S(t),\I(t))$) by bounding $\Pr((a,b)\in E(t)\mid (\S(t),\I(t)) = (\S_0,\I_0)) = \Pr((a,b)\in E(t)\mid \cup_{F\in\F}F\abagn)$. To this end, we pick $F\in \F$ and $\delta>0$, and apply Bayes' rule to $\Pr((a,b)\in E(t)\mid F\abagn^\delta)$ as follows:
\begin{align}\label{eq:bayes_with_question}
    \Pr((a,b)\in E(t)\mid F\abagn^\delta)&=\left(\frac{ \Pr(F\abagn^\delta)   }{ \Pr(F\abagn^\delta \mid (a,b)\in E(t))\cdot\Pr((a,b)\in E(t)) }\right)^{-1}\cr
    &=\left(1 + \frac{\Pr(F\abagn^\delta\mid (a,b)\notin E(t) ) } {   \Pr(F\abagn^\delta \mid (a,b)\in E(t))}\cdot \frac{\Pr((a,b)\notin E(t)) }{\Pr((a,b)\in E(t))} \right)^{-1}\cr
    &\stackrel{(a)}{=}\left(1+ \frac{\Pr(F\abagn^\delta\mid (a,b)\notin E(t) ) } {   \Pr(F\abagn^\delta \mid (a,b)\in E(t))} \cdot \frac{ 1 - \rho_{ij}/n  }{ \rho_{ij}/n } \right)^{-1},
\end{align}
where $(a)$ holds because $\Pr((a,b)\in E(t))=\frac{\rho_{ij} }{n}$, which is the probability that the edge $(a,b)$ exists in the network after the last of the updates of $1_{(a,b)}$ to occur during $[0,t]$.  

We now estimate $\frac{\Pr(F\abagn^\delta\mid (a,b)\notin E(t) ) } {   \Pr(F\abagn^\delta \mid (a,b)\in E(t))}$. Note that if $\delta$ is small enough, either $F^\delta \subset\{(a,b)\in E(t)\}$ or $F^\delta\subset\{(a,b)\notin E(t) \}$. Assume w.l.o.g. that $F^\delta\subset \{(a,b)\notin E(t)\}$ (equivalently, $F^\delta\abcomp\subset \{(a,b)\in E(t)\}$), and observe that
\begin{align}\label{eq:delta_question}
    \frac{\Pr(F\abagn^\delta\mid (a,b)\notin E(t) ) } {\Pr(F\abagn^\delta \mid (a,b)\in E(t))}&=\frac{\Pr(F\abagn^\delta\cap\{(a,b)\notin E(t)\})}{\Pr(F\abagn^\delta\cap\{(a,b)\in E(t)\})}\cdot\frac{\Pr(a,b)\in E(t) }{\Pr(a,b)\notin E(t)}\cr
    &=\frac{\Pr(F^\delta)}{\Pr(F^\delta\abcomp)}\left(\frac{ \rho_{ij}/{n} }{ 1- \rho_{ij}/{n}}\right).
\end{align}

Thus, the next step is to evaluate $\frac{\Pr(F^\delta)}{\Pr(F\abcomp^\delta)}$. To do so, suppose $F=\{\x^{(0)}\stackrel{t_1}{\rightarrow}\x^{(1)}\stackrel{t_2}{\rightarrow}\cdots\stackrel{t_r}{\rightarrow}\x^{(r)}\stackrel{t_{r+1} }{\rightarrow}
x^{(r)}\}$  with $t_{r+1}:=t$, $\Lambda_{(a,b)}(F)=\zeta\in \{0,1,\ldots,r\}$, $\Gamma_b(F)=\xi\in\{0,1,\ldots, r\}$, $t_{\Lambda_{(a,b)}(F)}=t_\zeta=t-\tau$ for some $\tau\in [0,t]$, and $t_{\Gamma_b(F)}=t_\xi=t-\kappa$ for some $\kappa\in[0,\tau]$. Then the assumption $F^\delta\subset\{(a,b)\notin E(t) \}$ implies that $\x^{(\zeta)}=\x^{(\zeta-1)}_{\nonexistence(a,b)}$ and hence also that $\x^{(\zeta)}\abcomp=\x^{(\zeta-1)}_{\existence(a,b)}$. As a result, 
\begin{align*}
    F\abcomp &=\Big\{\mathbf{x}^{(0)} \stackrel{t_{1}}{\rightarrow} \cdots \stackrel{t_{\zeta-1}}{\rightarrow} \mathbf{x}^{(\zeta-1)} \stackrel{t-\tau}{\rightarrow} \mathbf{x}_{\uparrow(a, b)}^{(\zeta-1)} \stackrel{t_{\zeta+1}}{\rightarrow} \mathbf{x}_{(\overline{a, b})}^{(\zeta+1)}\stackrel{t_{\zeta+2}}{\rightarrow} \cdots \stackrel{t_{r}}{\rightarrow} \mathbf{x}_{(\overline{a,b})}^{(r)} \stackrel{t}{\rightarrow} \mathbf{x}_{(\overline{a, b})}^{(r)}\Big\}.
\end{align*}
It now follows from Lemma~\ref{lem:delta_approx} that 
\begin{align}\label{eq:ratio_intermediate}
    \frac{ \Pr(F^{\delta}) }{\Pr(F^\delta\abcomp)}=\frac{ e^{-q_r(t-t_r)}\prod_{\ell=\zeta }^r q_{\ell-1, \ell} (e^{-q_{\ell-1}(t_\ell-t_{\ell-1})}\delta + o(\delta))  }{  e^{-\bar q_r(t-t_r)}\prod_{\ell=\zeta }^r \bar q_{\ell-1, \ell} (e^{-\bar q_{\ell-1}(t_\ell-t_{\ell-1})}\delta + o(\delta))  },
\end{align}
where $q_{\ell-1,\ell}:=\Q(\x^{(\ell-1)},\x^{(\ell)})$ and $\bar q_{\ell-1,\ell}:=\Q(\x^{(\ell-1)}\abcomp,\x^{(\ell)}\abcomp)$ for all $\ell\in\{\zeta+1,\ldots, r\}$,   $q_{\ell}:=-\Q(\x^{(\ell)},\x^{(\ell)}) $ and $\bar q_{\ell}:=-\Q(\x^{(\ell)}\abcomp ,\x^{(\ell)}\abcomp )$ for all $\ell\in \{\zeta,\ldots, r\}$,  $q_{\zeta-1,\zeta} := \Q(\x^{(\zeta-1)}, \x^{(\zeta-1)}_{\nonexistence(a,b)}) = \lambda\left(1 -\frac{\rho_{ij} }{n} \right) $, $\bar q_{\zeta-1, \zeta}:= \Q(\x^{(\zeta-1)}, \x^{(\zeta-1)}_{\existence(a,b)}) = \lambda\frac{\rho_{ij}}{n}$, and $\bar q_{\zeta-1}= q_{\zeta-1}:=-\Q(\x^{(\zeta-1)}, \x^{(\zeta-1)})$.

The above definitions imply that $\frac{q_{\zeta-1,\zeta}}{\bar q_{\zeta-1,\zeta}}=\frac{1-\rho_{ij}/n }{\rho_{ij}/n}$. To evaluate $\frac{q_{\ell-1,\ell}}{\bar q_{\ell-1,\ell}}$ for $\ell\in\{\zeta+1,\ldots,r\}$, observe that by the definition of $\Lambda_{(a,b)}(F)$, we have $\x^{(\ell)}\notin \{\x^{(\ell-1)}_{\existence(a,b)}, \x^{(\ell-1)}_{\nonexistence(a,b)}\}$ for $\ell>\zeta=\Lambda_{(a,b)}(F)$. Moreover, the facts $F\subset \{\S(t)=\S_0\}$ and $a\in \S_0$ together imply that $\x^{(\ell)}\neq \x^{(\ell-1)}_{\infect a}$ for all $\ell\in [r]$. Hence, $\x^{(\ell)}\notin \{\x^{(\ell-1)}_{\existence(a,b)}, \x^{(\ell-1)}_{\nonexistence(a,b)},\x^{(\ell-1)}_{\infect a} \}$ for all $\ell>\zeta$. It now follows from the definition of $\Q$ (Section~\ref{sec:formulation}) that $\Q(\x^{(\ell-1)},\x^{(\ell)})=\Q(\x^{(\ell-1)}\abcomp,\x^{(\ell)}\abcomp)$ for all $\ell\in\{\zeta+1,\ldots, r\}$. Thus, $\frac{\prod_{\ell=\zeta}^r q_{\ell-1,\ell} }{\prod_{\ell=\zeta}^r \bar q_{\ell-1,\ell}}= \frac{1-\rho_{ij}/n }{\rho_{ij}/n}$.

To relate $\bar q_\ell$ to $ q_\ell$, note that $F\subset\{(a,b)\notin E(t)\}$ implies that $\x^{(\ell)}_{\langle a,b\rangle}=0$ and hence also that $\left(\x^{(\ell)}\abcomp\right)_{\langle a,b\rangle}=1$ for $\ell\geq \zeta$. Since $\x^{(\ell)}_u=\left(\x^{(\ell)}\abcomp\right)_u$ for all $u\neq\langle a,b\rangle$, we have 
\begin{align*}
    \Q\left (\x^{(\ell)}\abcomp, \left(\x^{(\ell)}\abcomp \right)_{\infect a}\right)&= \sum_{k=1}^m B_{ik }E_k^{(a)} \left (\x^{(\ell)}\abcomp\right)\cr
    &=\sum_{k=1}^m \sum_{c\in \I_k\left(\x^{(\ell)}\abcomp\right)} B_{ik} 1_{(a,c)}\left(\x^{(\ell)}\abcomp\right)\cr
    &=\sum_{k=1}^m \sum_{c\in \I_k\left(\x^{(\ell)}\right)} B_{ik} 1_{(a,c)}\left(\x^{(\ell)}\right) + B_{ij}1_{(a,b)}\left(\x^{(\ell)}\abcomp\right)\cr
    &=\Q\left( \x^{(\ell)} , \x^{(\ell)}_{\infect a} \right) + B_{ij}
\end{align*}
for all $\ell\in\{\zeta,\;\ldots, r\}$ such that $b\in\I_j(\x^{(\ell)})$, and
\begin{align*}
    \Q\left(\x^{(\ell)}\abcomp, \left(\x^{(\ell)}\abcomp \right)_{\infect a} \right) = \Q\left(\x^{(\ell)}, \x^{(\ell)}_{\infect a}\right)
\end{align*}
for all $\ell\in\{\zeta,\ldots, r\}$ such that $b\notin \I_j(\x^{(\ell)})$. As a result, 
\begin{align} \label{eq:plus_B_ij}
    \Q\left (\x^{(\ell)}\abcomp, \left(\x^{(\ell)}\abcomp \right)_{\infect a}\right)=
    \begin{cases}
        \Q\left( \x^{(\ell)} , \x^{(\ell)}_{\infect a} \right)\quad&\text{if }\zeta\leq \ell< \xi\\
        \Q\left( \x^{(\ell)} , \x^{(\ell)}_{\infect a} \right) + B_{ij}\quad&\text{if }\max\{\zeta,\xi\}\leq \ell\leq r.
    \end{cases}
\end{align}
Moreover, using the definition of $\Q$ one can easily verify that regardless of whether the network is in state $\x^{(\ell)}$ or in state $\x^{(\ell)}\abcomp$, the rates of infection of nodes in $\S(\x^{(\ell)})\setminus\{a\}$, the recovery rates of nodes in $\I(x^{(\ell)})$, and the edge update rate (which is $\lambda$) are the same. Given that $\Q(\x,\x)=-\sum_{\y\in\mathbb S\setminus\{\x\} }\Q(\x,\y)$ for all $\x\in\mathbb S$, it now follows from~\eqref{eq:plus_B_ij} that
\begin{align} 
    \bar q_\ell-q_\ell=
    \begin{cases}
       0\quad&\text{if }\zeta\leq \ell< \xi\\
       B_{ij}\quad&\text{if }\max\{\zeta,\xi\}\leq \ell\leq r.
    \end{cases}
\end{align}

Combining the above observations with~\eqref{eq:ratio_intermediate} yields
\begin{align*}
    \frac{\Pr(F^\delta) }{\Pr(F^\delta\abcomp)}&=\left(\frac{1-{\rho_{ij} }/{n} }{{\rho_{ij} }/{n}}\right)e^{(\bar q_r-q_r)(t-t_r)}\prod_{\ell=\zeta-1}^{r-1} \left( e^{(\bar q_{\ell}-q_{\ell})(t_{\ell+1}-t_{\ell}) }\delta + o(\delta) \right) \cr
    &\stackrel{(a)}= \left(\frac{1-{\rho_{ij} }/{n} }{{\rho_{ij} }/{n}}\right)\prod_{\ell=\zeta-1}^{r} \left( e^{(\bar q_{\ell}-q_{\ell})(t_{\ell+1}-t_{\ell}) }\delta + o(\delta) \right)\cr
    &= \left(\frac{1-{\rho_{ij} }/{n} }{{\rho_{ij} }/{n}}\right)\prod_{\ell=\zeta-1}^{\max\{\zeta,\xi\}-1} (1+o(\delta))\prod_{\ell=\max\{\zeta,\xi\}}^r \left( e^{B_{ij} (t_{\ell+1}-t_{\ell}) }\delta + o(\delta) \right)\cr 
    &=\left(\frac{1-{\rho_{ij}}/{n} }{{\rho_{ij} }/{n}}\right)e^{B_{ij}\left(t-t_{\max\{\zeta,\xi\}}\right)}+o(\delta),
\end{align*} 
which means that
\begin{align}\label{eq:this_will_clinch_it}
    \frac{\Pr(F^\delta) }{\Pr(F^\delta\abcomp)}=\left(\frac{1-{\rho_{ij}}/{n} }{{\rho_{ij} }/{n}}\right)e^{B_{ij}\min\{\tau,\kappa\}}+o(\delta).
\end{align}

We now use~\eqref{eq:bayes_with_question} and~\eqref{eq:delta_question} along with~\eqref{eq:this_will_clinch_it} to show that
\begin{align*}
    \Pr((a,b)\in E(t)\mid F^\delta\abagn)&=\left(1+\left(\frac{1-\rho_{ij}/n }{\rho_{ij}/n} \right)e^{B_{ij}\min\{\tau,\kappa\} }+o(\delta)\right)^{-1}\cr
    &= \frac{\rho_{ij}  }{n}\cdot\frac{1}{ \frac{\rho_{ij}  }{n}\cdot 1 + \left(1-\frac{\rho_{ij}  }{n}\right) e^{B_{ij}\min\{\tau,\kappa\} } }+o(\delta).
\end{align*}
In the limit as $\delta\rightarrow 0$, this yields
\begin{align}\label{eq:lower_bound}
    \Pr((a,b)\in E(t)\mid F\abagn )&=\frac{\rho_{ij}  }{n}\cdot\frac{1}{ \frac{\rho_{ij}  }{n}\cdot 1 + \left(1-\frac{\rho_{ij}  }{n}\right) e^{B_{ij}\min\{\tau,\kappa\} } }
\end{align}

Note that these bounds hold for all $F\in\F$ satisfying $t-t_{\Lambda_{(a,b)}(F) }=\tau$ and $t-t_{\Gamma_b(F)}=\kappa$. We now recall that $T$ is the difference between $t$ and the time at which $1_{(a,b)}$ is updated for the last time during $[0,t]$, so that we have $T=t- t_{\Lambda_{(a,b)}(F)}$ whenever $F\abagn$ occurs. Likewise, $K=t-t_{\Gamma_b(F)}$ on $F\abagn$. Therefore,~\eqref{eq:lower_bound} holds for all $F\in\F$ satisfying $F\subset\{T=\tau\}\cap\{K=\kappa\}$. As a result, we have
$$
    \Pr((a,b)\in E(t)\mid \cup_{F\in\F } F\abagn, T=\tau,K=\kappa)=\frac{\rho_{ij}  }{n}\cdot\frac{1}{ \frac{\rho_{ij}  }{n}\cdot 1 + \left(1-\frac{\rho_{ij}  }{n}\right) e^{B_{ij}\min\{\tau,\kappa\} } }.
$$
Since $\cup_{F\in\F}F\abagn=\{ (\S(t),\I(t))=(\S_0,\I_0) \} $ as argued earlier, it follows that
\begin{align}\label{eq:to_be_used_in_remarks}
    \Pr((a,b)\in E(t)\mid (\S(t),\I(t))=(\S_0,\I_0), T=\tau,K=\kappa)=\frac{\rho_{ij}  }{n}\cdot\frac{1}{ \frac{\rho_{ij}  }{n}\cdot 1 + \left(1-\frac{\rho_{ij}  }{n}\right) e^{B_{ij}\min\{\tau,\kappa\} } }.
\end{align}
Observe that $0\leq \min\{\kappa,\tau\}\leq \tau$, which means that 
\begin{align}
    \frac{\rho_{ij}  }{n}&\geq \Pr((a,b)\in E(t)\mid (\S(t),\I(t))=(\S_0,\I_0), T=\tau,K=\kappa)\cr
    &\geq \frac{\rho_{ij}  }{n}\cdot\frac{1}{ \frac{\rho_{ij}  }{n}\cdot 1 + \left(1-\frac{\rho_{ij}  }{n}\right) e^{B_{ij}\tau} }\cr
    &\geq\frac{\rho_{ij} }{n}e^{-B_{ij}\tau}.
\end{align}
Furthermore, since the above bounds do not depend on $\kappa$, we can remove the conditioning on $K$ to obtain
\begin{align}
    \frac{\rho_{ij}  }{n}&\geq \Pr((a,b)\in E(t)\mid (\S(t),\I(t))=(\S_0,\I_0), T=\tau)
    \geq\frac{\rho_{ij} }{n}e^{-B_{ij}\tau}.
\end{align}
Consequently, 
\begin{align*}
    \frac{\rho_{ij} }{n} &\geq \Pr((a,b)\in E(t)\mid (\S(t),\I(t))=(\S_0,\I_0))\cr
    &\geq \frac{\rho_{ij} }{n} \int_0^t e^{-B_{ij}\tau} f_{T\mid (\S(t),\I(t)) = (\S_0, \I_0)} (\tau)d\tau\cr
    &\geq\frac{\rho_{ij}}{n}\left(1-\frac{B_{ij} }{\lambda}(1-e^{-\lambda t}) \right),
\end{align*}
where the second inequality follows from Lemmas~\ref{lem:semi-final_integral_lemma} and~\ref{lem:final_integral_lemma}. Since $(\S_0,\I_0)$ is an arbitrary realization of $(\S(t),\I(t))$, the assertion of the proposition follows.
\end{proof}

\begin{remark}\label{rem:for_converse_result} The proof of Proposition~\ref{prop:bounds} enables us to make a stronger statement about the conditional probability of $b$ being in contact with $a$ at time $t$. Indeed, consider~\eqref{eq:to_be_used_in_remarks} and observe that it holds for all realizations $(\S_0,\I_0)$ of $(\S(t),\I(t))$ that satisfy $a\in\S_0\cap \A_i$ and $b\in\I_0\cap\A_j$. It follows that
$$
    \Pr((a,b)\in E(t)\mid \S(t),\I(t), T=\tau,K=\kappa)=\frac{\rho_{ij}  }{n}\cdot\frac{1}{ \frac{\rho_{ij}  }{n}\cdot 1 + \left(1-\frac{\rho_{ij}  }{n}\right) e^{B_{ij}\min\{\tau,\kappa\} } }
$$
holds for all node pairs $(a,b)\in\S_i(t)\times\I_j(t)$. 
Equivalently, the following holds for all $(a,b)\in\S_i(t)\times\I_j(t)$:
$$
    \Pr((a,b)\in E(t)\mid \S(t),\I(t), T,K)=\frac{\rho_{ij}  }{n}\cdot\frac{1}{ \frac{\rho_{ij}  }{n}\cdot 1 + \left(1-\frac{\rho_{ij}  }{n}\right) e^{B_{ij}\min\{T,K\} } }.
$$
\end{remark}

\begin{lemma}\label{lem:last_needed}
Let $\tau_1,\tau_2\in [0,t]$. Then
$$
    1\leq \frac{\Pr\bigg(b\stackrel{0,t}{\centernot\rightsquigarrow}a \,\Big\lvert\, b\in\I(t^-), K=\kappa, T=\tau_2, (a,b)\notin E(t) \bigg) }{\Pr\bigg(b\stackrel{0,t}{\centernot\rightsquigarrow}a \,\Big\lvert\, b\in\I(t^-), K=\kappa, T=\tau_1, (a,b)\notin E(t) \bigg)}\leq e^{B_{ij}(\tau_2-\tau_1)}.
$$
\end{lemma}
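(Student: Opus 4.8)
The plan is to route everything through Lemma~\ref{lem:T_on} and then compare the two conditional probabilities via a coupling of the edge-state process of $(a,b)$. Write
$$
  g(\tau):=\Pr\!\left(b\stackrel{0,t}{\centernot\rightsquigarrow}a \,\Big\lvert\, b\in\I(t^-),\,K=\kappa,\,T=\tau,\,(a,b)\notin E(t)\right),
$$
so the lemma asks for $1\le g(\tau_2)/g(\tau_1)\le e^{B_{ij}(\tau_2-\tau_1)}$; we take $\tau_1\le\tau_2$, as the stated bound presupposes. Conditioning on the value of $T\on=\int_{t-K}^{t-T}1_{(a,b)}(\sigma)\,d\sigma$ and invoking Lemma~\ref{lem:T_on} gives $g(\tau)=\E\!\left[e^{-B_{ij}T\on}\,\big\lvert\, b\in\I(t^-),K=\kappa,T=\tau,(a,b)\notin E(t)\right]$. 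If $\tau\ge\kappa$ the integration window collapses, so $T\on=0$ and $g(\tau)=1$; this disposes of the case $\tau_2\ge\kappa$, after also noting $T\on\le(\kappa-\tau_1)_+\le\tau_2-\tau_1$, which forces $g(\tau_1)\ge e^{-B_{ij}(\tau_2-\tau_1)}$. It then remains to treat $\tau_1<\tau_2<\kappa$.

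The next step is to pin down the conditional law of $\{1_{(a,b)}(\sigma):t-\kappa\le\sigma\le t-\tau\}$ under this conditioning. By Remark~\ref{rem:tilde_K} the infection time $\tilde K=\inf\{\tau':b\in\I(\tau')\}$ is independent of the whole trajectory $\{1_{(a,b)}(\sigma):0\le\sigma\le t\}$, and since $\Q(\x,\x_{\recover b})=\gamma_j$ for every $\x$ with $x_b=1$, the recovery of $b$ is likewise independent of that trajectory; hence additionally conditioning on $\{b\in\I(t^-),K=\kappa\}$ does not change the law of the edge-state trajectory. Because $\Q(\x,\x_{\existence(a,b)})+\Q(\x,\x_{\nonexistence(a,b)})=\lambda$, the updates of $1_{(a,b)}$ form a rate-$\lambda$ Poisson process carrying i.i.d.\ Bernoulli$(\rho_{ij}/n)$ present/absent marks; conditioning on $T=\tau$ fixes the latest update (the one at time $t-\tau$), conditioning on $(a,b)\notin E(t)$ fixes its mark to $0$, and, by independence of Poisson increments and marks, the updates strictly before $t-\tau$ remain a fresh rate-$\lambda$ Poisson process with fresh i.i.d.\ marks. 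Consequently, writing $E_2,E_3,\dots$ for the backward inter-update gaps and $Z_2,Z_3,\dots$ for their marks (and $Z_1:=0$), one obtains, for $v\in[0,\kappa-\tau]\subset[0,t-\tau)$,
$$
  1_{(a,b)}(t-\tau-v)=Z_{k(v)},\qquad k(v):=\max\Big\{k\ge1:\ \textstyle\sum_{i=2}^{k}E_i\le v\Big\},
$$
and the crucial point is that $k(\cdot)$ and the $Z_i$ do not depend on $\tau$ on this range.

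Given this, the coupling is immediate: realize the conditioned trajectories for $\tau=\tau_1$ and $\tau=\tau_2$ from one and the same $(E_i)_{i\ge2}$ and $(Z_i)_{i\ge2}$. A change of variables gives $T\on^{(\ell)}=\int_{0}^{\kappa-\tau_\ell}Z_{k(v)}\,dv$ for $\ell=1,2$, and since $\kappa-\tau_1>\kappa-\tau_2$,
$$
  T\on^{(1)}=T\on^{(2)}+\int_{\kappa-\tau_2}^{\kappa-\tau_1}Z_{k(v)}\,dv,\qquad\text{hence}\qquad T\on^{(2)}\le T\on^{(1)}\le T\on^{(2)}+(\tau_2-\tau_1)\ \text{a.s.}
$$
Applying $x\mapsto e^{-B_{ij}x}$ and taking expectations, the left inequality yields $g(\tau_1)\le g(\tau_2)$ (the lower bound $1$), and the right inequality yields $g(\tau_2)\le e^{B_{ij}(\tau_2-\tau_1)}g(\tau_1)$ (the upper bound); here $g(\tau_1)>0$ since $T\on$ is bounded.

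I expect the one genuinely delicate point to be the middle paragraph: verifying that, jointly with the conditioning events $\{b\in\I(t^-),K=\kappa,(a,b)\notin E(t)\}$, the updates of $1_{(a,b)}$ before $t-\tau$ really do form a fresh rate-$\lambda$ Poisson process with i.i.d.\ marks whose law on $[0,\kappa-\tau]$ is free of $\tau$. This is exactly where Remark~\ref{rem:tilde_K}, the recovery rule $\Q(\x,\x_{\recover b})=\gamma_j$, and the Poisson/independent-marks structure of the edge-update mechanism must be combined carefully; once that is in place, the coupling and the two exponential inequalities are routine.
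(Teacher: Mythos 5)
Your proof is correct and is essentially the paper's own argument recast as a coupling: the paper likewise splits the exposure time into the common part $T\on^{(\tau_2)}=\big(\int_{t-\kappa}^{t-\tau_2}1_{(a,b)}\big)_+$ --- whose conditional density it shows is the same under $T=\tau_1$ and $T=\tau_2$ (both equal to the density given $T\le\tau_2$, by independence of the Poisson edge-update process over disjoint intervals) --- and the increment over $[t-\tau_2,t-\tau_1]$, which it bounds between $0$ and $\tau_2-\tau_1$ so that the corresponding factor of the no-transmission probability lies in $[e^{-B_{ij}(\tau_2-\tau_1)},1]$. One caution about your middle paragraph: the parametrization $1_{(a,b)}(t-\tau-v)=Z_{k(v)}$ is off by one (for $v<E_2$ the state is the mark of the update at $t-\tau-E_2$, not the mark $0$ assigned at $t-\tau$) and it silently discards the deterministic initial state on the event that no update has occurred in $[0,t-\tau-v]$, which has positive probability $e^{-\lambda(t-\tau-v)}$. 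Both blemishes vanish if you instead couple by realizing the two conditioned trajectories from a single copy of the unconditional edge process on $[0,t-\tau_1)$ --- conditioning on $\{T=\tau_\ell,\ (a,b)\notin E(t)\}$ constrains only the updates on $[t-\tau_\ell,t]$ and so leaves the law on $[0,t-\tau_\ell)$ unchanged --- after which $T\on^{(1)}-T\on^{(2)}=\int_{t-\tau_2}^{t-\tau_1}1_{(a,b)}(s)\,ds\in[0,\tau_2-\tau_1]$ holds exactly and the two exponential inequalities follow as you state.
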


\begin{proof}
Consider $T\on^{(\tau_2)}:=\left(\int_{t-\kappa}^{t-\tau_2}1_{(a,b)}(\tau')d\tau'\right)_+$, which denotes the duration of time for which $b$ is in contact with $a$ during the interval $[t-\kappa,t-\tau_2)$. Then, for any $\sigma\in[0,(\kappa-\tau_2)_+]$, we have
\begin{align}\label{eq:with_sigma}
    &\Pr\bigg(b\stackrel{0,t}{\centernot\rightsquigarrow}a \,\Big\lvert\, T\on^{(\tau_2)}=\sigma, b\in\I(t^-),K=\kappa,  T=\tau_1, (a,b)\notin E(t) \bigg)\cr
    &=\Pr\bigg(b\stackrel{t-\kappa,\kappa-\tau_1}{\centernot\rightsquigarrow}a \,\Big\lvert\, T\on^{(\tau_2)}=\sigma, b\in\I(t^-), K=\kappa, T=\tau_1, (a,b)\notin E(t) \bigg)\cr
    &=\Pr\bigg(b\stackrel{t-\kappa,\kappa-\tau_2}{\centernot\rightsquigarrow}a \,\Big\lvert\, T\on^{(\tau_2)}=\sigma, b\in\I(t^-), K=\kappa, T=\tau_1, (a,b)\notin E(t) \bigg)\cr
    &\quad\times \Pr\bigg(b\stackrel{t-\tau_2,\tau_2-\tau_1}{\centernot\rightsquigarrow}a \,\Big\lvert\, T\on^{(\tau_2)}=\sigma, b\in\I(t^-), K=\kappa, T=\tau_1, (a,b)\notin E(t)\bigg)\cr
    &\stackrel{(a)}{\geq}\Pr\bigg(b\stackrel{t-\kappa,\kappa-\tau_2}{\centernot\rightsquigarrow}a \,\Big\lvert\, T\on^{(\tau_2)}=\sigma, b\in\I(t^-), K=\kappa, T=\tau_1, (a,b)\notin E(t) \bigg) e^{-B_{ij}(\tau_2-\tau_1)}\cr
    &\stackrel{(b)}{\geq} e^{-B_{ij}\sigma}\cdot e^{-B_{ij}(\tau_2-\tau_1)}\cr
    &\stackrel{(c)}{=} \Pr\bigg(b\stackrel{t-\kappa,\kappa-\tau_2}{\centernot\rightsquigarrow}a \,\Big\lvert\, T\on^{(\tau_2)}=\sigma, b\in\I(t^-), K=\kappa, T=\tau_2, (a,b)\notin E(t) \bigg) e^{-B_{ij}(\tau_2-\tau_1)}\cr
    &=\Pr\bigg(b\stackrel{0,t}{\centernot\rightsquigarrow}a \,\Big\lvert\, T\on^{(\tau_2)}=\sigma, b\in\I(t^-),K=\kappa,  T=\tau_2, (a,b)\notin E(t) \bigg)e^{-B_{ij}(\tau_2-\tau_1)},
\end{align}
where (a) follows from the fact that $\int_{t-\tau_2}^{t-\tau_1} 1_{(a,b)}(\tau')d\tau'\leq \tau_2-\tau_1$ and from the observation that
\begin{align*}
    &\Pr\bigg(b\stackrel{t-\tau_2,\tau_2-\tau_1}{\centernot\rightsquigarrow}a \,\Big\lvert\, T\on^{(\tau_2)}=\sigma, b\in\I(t^-), K=\kappa, T=\tau_1, (a,b)\notin E(t),\int_{t-\tau_2}^{t-\tau_1} 1_{(a,b)}(\tau')d\tau'\bigg)\cr
    &=e^{-B_{ij}\left(\int_{t-\tau_2}^{t-\tau_1} 1_{(a,b)}(\tau')d\tau'\right)},
\end{align*}
the proof of which parallels the proof of Lemma~\ref{lem:T_on}, and $(b)$ and $(c)$ follow from the fact that
\begin{align*}
    &\Pr\bigg(b\stackrel{t-\kappa,\kappa-\tau_2}{\centernot\rightsquigarrow}a \,\Big\lvert\, T\on^{(\tau_2)}=\sigma, b\in\I(t^-), K=\kappa, T=\tau', (a,b)\notin E(t) \bigg) = e^{-B_{ij}\sigma}
\end{align*}
holds for all $\tau'\in[0,t]$, the proof of which also parallels the proof of Lemma~\ref{lem:T_on}.

We now eliminate $T\on^{(\tau_2)}$ from~\eqref{eq:with_sigma}. To do so, we first note the following: given that $t-T\geq t-\tau_2$, the random variable $T\on^{(\tau_2)}$ is by definition conditionally independent of $t-T$ (the time of the last edge update of $(a,b)$ during $[t-\tau_2,t]$) because the edge update process for $(a,b)$ is a Poisson process and hence, for any collection of disjoint time intervals, the times at which $1_{(a,b)}$ is updated during the intervals are independent of each other. Since $\{T=\tau_2\},\{T=\tau_1\}\subset\{t-T\geq t-\tau_2\}$ and since $\{t-T\geq t-\tau_2\}=\{T\leq \tau_2\}$, it follows that
\begin{align}\label{eq:late_label_1}
    &\Pr\bigg(b\stackrel{0,t}{\centernot\rightsquigarrow}a \,\Big\lvert\, b\in\I(t^-), K=\kappa, T=\tau_1, (a,b)\notin E(t) \bigg)\cr
    &=\int_0^{(\kappa-\tau_2)_+}\Pr\bigg(b\stackrel{0,t}{\centernot\rightsquigarrow}a \,\Big\lvert\, T\on^{(\tau_2)}=\sigma, b\in\I(t^-), K=\kappa,  T=\tau_1, (a,b)\notin E(t) \bigg)\cr
    &\quad\quad\quad\quad\quad\quad \cdot f_{T\on^{(\tau_2)}\mid b\in\I(t^-), K=\kappa,T=\tau_1,(a,b)\notin E(t) }(\sigma)d\sigma\cr
    &=\int_0^{(\kappa-\tau_2)_+}\Pr\bigg(b\stackrel{0,t}{\centernot\rightsquigarrow}a \,\Big\lvert\, T\on^{(\tau_2)}=\sigma, b\in\I(t^-), K=\kappa,  T=\tau_1, (a,b)\notin E(t) \bigg)\cr
    &\quad\quad\quad\quad\quad\quad \cdot f_{T\on^{(\tau_2)}\mid b\in\I(t^-), K=\kappa,T\leq \tau_2,(a,b)\notin E(t) }(\sigma)d\sigma,
\end{align}
and likewise,
\begin{align*}\label{eq:late_label_2}
    &\Pr\bigg(b\stackrel{0,t}{\centernot\rightsquigarrow}a \,\Big\lvert\, b\in\I(t^-), K=\kappa, T=\tau_2, (a,b)\notin E(t) \bigg)\cr
    &=\int_0^{(\kappa-\tau_2)_+}\Pr\bigg(b\stackrel{0,t}{\centernot\rightsquigarrow}a \,\Big\lvert\, T\on^{(\tau_2)}=\sigma, b\in\I(t^-), K=\kappa, T=\tau_2, (a,b)\notin E(t) \bigg)\cr
    &\quad\quad\quad\quad\quad \quad\cdot f_{T\on^{(\tau_2)}\mid b\in\I(t^-), K=\kappa,T\leq \tau_2,(a,b)\notin E(t) }(\sigma)d\sigma.
\end{align*}
Therefore, taking conditional expectations on both sides of~\eqref{eq:with_sigma} (w.r.t. the PDF $f_{T\on^{(\tau_2)}\mid b\in\I(t^-), K=\kappa,T\leq \tau_2,(a,b)\notin E(t) }$) yields
\begin{align*}
    &\Pr\bigg(b\stackrel{0,t}{\centernot\rightsquigarrow}a \,\Big\lvert\, b\in\I(t^-), K=\kappa, T=\tau_1, (a,b)\notin E(t) \bigg)\cr
    &\geq \Pr\bigg(b\stackrel{0,t}{\centernot\rightsquigarrow}a \,\Big\lvert\, b\in\I(t^-), K=\kappa, T=\tau_2, (a,b)\notin E(t) \bigg)e^{-B_{ij}(\tau_2-\tau_1)},
\end{align*}
which proves the upper bound. For the lower bound, we again proceed as in~\eqref{eq:with_sigma}, but reverse the inequality signs:
\begin{align*}
    &\Pr\bigg(b\stackrel{0,t}{\centernot\rightsquigarrow}a \,\Big\lvert\, T\on^{(\tau_2)}=\sigma, b\in\I(t^-),K=\kappa,  T=\tau_1, (a,b)\notin E(t) \bigg)\cr
    &=\Pr\bigg(b\stackrel{t-\kappa,\kappa-\tau_2}{\centernot\rightsquigarrow}a \,\Big\lvert\, T\on^{(\tau_2)}=\sigma, b\in\I(t^-), K=\kappa, T=\tau_1, (a,b)\notin E(t) \bigg)\cr
    &\quad\times \Pr\bigg(b\stackrel{t-\tau_2,\tau_2-\tau_1}{\centernot\rightsquigarrow}a \,\Big\lvert\, T\on^{(\tau_2)}=\sigma, b\in\I(t^-), K=\kappa, T=\tau_1, (a,b)\notin E(t)\bigg)\cr
    &\stackrel{}{\leq}\Pr\bigg(b\stackrel{t-\kappa,\kappa-\tau_2}{\centernot\rightsquigarrow}a \,\Big\lvert\, T\on^{(\tau_2)}=\sigma, b\in\I(t^-), K=\kappa, T=\tau_1, (a,b)\notin E(t) \bigg)\cr
    &\stackrel{}{=} e^{-B_{ij}\sigma}\cr
    &\stackrel{}{=} \Pr\bigg(b\stackrel{t-\kappa,\kappa-\tau_2}{\centernot\rightsquigarrow}a \,\Big\lvert\, T\on^{(\tau_2)}=\sigma, b\in\I(t^-), K=\kappa, T=\tau_2, (a,b)\notin E(t) \bigg)\cr
    &=\Pr\bigg(b\stackrel{0,t}{\centernot\rightsquigarrow}a \,\Big\lvert\, T\on^{(\tau_2)}=\sigma, b\in\I(t^-),K=\kappa,  T=\tau_2, (a,b)\notin E(t) \bigg)
\end{align*}
In light of~\eqref{eq:late_label_1} and~\eqref{eq:late_label_2}, the above inequality implies the required lower bound.
\end{proof}

\begin{lemma}\label{lem:semi-final_integral_lemma} Let $T$ denote the random time defined earlier. Then
\begin{gather*}
    \int_0^t e^{-B_{ij}\tau}f_{T\mid (\S(t),\I(t)) = (\S_0,\I_0), (a,b)\notin E(t)}(\tau)d\tau 
    \geq 1 - \frac{B_{ij} }{\lambda}(1-e^{-\lambda t}).
\end{gather*}
\end{lemma}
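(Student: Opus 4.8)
The plan is to condition on the infection time of $b$ and on the entire history of the edge $(a,b)$, use the conditional-independence facts already established (Lemmas~\ref{lem:difficult},~\ref{lem:simple_but_difficult},~\ref{lem:equation_only},~\ref{lem:T_on},~\ref{lem:T_on_general}) to reduce the conditional law of $T$ to the explicit one of Lemma~\ref{lem:pdf_of_T}, and finally to convert the ``exponential tilting'' of that law by the no-transmission probability --- whose multiplicative growth is exactly controlled by Lemma~\ref{lem:last_needed} --- into a single elementary scalar inequality.

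First the reductions. The quantity in question equals $\E\big[e^{-B_{ij}T}\mid (\S(t),\I(t))=(\S_0,\I_0),\,(a,b)\notin E(t)\big]$, and for a realization with $a\in\S_i(t)$, $b\in\I_j(t)$ the event $\{(\S(t),\I(t))=(\S_0,\I_0)\}$ forces $\{b\stackrel{0,t}{\centernot\rightsquigarrow}a\}$ and, almost surely, $\{b\in\I(t^-)\}$; so with $H_0:=\{b\in\I(t^-)\}\cap\{(a,b)\notin E(t)\}$ it suffices to lower-bound $\E[e^{-B_{ij}T}\mid (\S_0,\I_0),H_0]$. I would condition additionally on $K$ and on the path $\{1_{(a,b)}(\tau):0\le\tau\le t\}$: Lemma~\ref{lem:equation_only} makes the conditional probability of $(\S_0,\I_0)$ given $K=\kappa$, $\{b\stackrel{0,t}{\centernot\rightsquigarrow}a\}$, $H_0$ and that path equal a quantity $\phi(\kappa)$ depending only on $\kappa$, while Lemma~\ref{lem:T_on_general} makes the conditional probability of $\{b\stackrel{0,t}{\centernot\rightsquigarrow}a\}$ given $K=\kappa$, that path and $H_0$ equal $e^{-B_{ij}T\on}$. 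Multiplying and integrating out first the path and then $K$,
\[
\E\big[e^{-B_{ij}T}\mid (\S_0,\I_0),H_0\big]=\frac{\E\big[\phi(K)\,\E[e^{-B_{ij}(T+T\on)}\mid K,H_0]\mid H_0\big]}{\E\big[\phi(K)\,\E[e^{-B_{ij}T\on}\mid K,H_0]\mid H_0\big]}\ \ge\ \inf_{\kappa}\frac{\E[e^{-B_{ij}(T+T\on)}\mid K=\kappa,H_0]}{\E[e^{-B_{ij}T\on}\mid K=\kappa,H_0]}.
\]
In parallel, Lemma~\ref{lem:simple_but_difficult}, Lemma~\ref{lem:difficult} and the fact that $b$'s recovery clock runs at rate $\gamma_j$ independently of every edge state show that $(K,1_{\{b\in\I(t^-)\}})$ is independent of $(T,1_{(a,b)}(t))$, hence that conditionally on $H_0$ the variable $T$ is independent of $K$ and has the law $\mu_T$ of Lemma~\ref{lem:pdf_of_T}.

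Now fix $\kappa$ and condition on $T=\tau$: by Lemma~\ref{lem:T_on}, $\E[e^{-B_{ij}T\on}\mid K=\kappa,T=\tau,H_0]$ is exactly the quantity $g(\tau,\kappa)$ of Lemma~\ref{lem:last_needed}, so the $\kappa$-ratio above equals $\big(\int_0^t e^{-B_{ij}\tau}g(\tau,\kappa)\,d\mu_T(\tau)\big)/\big(\int_0^t g(\tau,\kappa)\,d\mu_T(\tau)\big)$. This is where I expect the main obstacle to be: $g(\cdot,\kappa)$ is nondecreasing, so this tilt pushes $T$ upward and any crude bound is too lossy --- the key is the \emph{multiplicative} control $g(\tau_2,\kappa)\le g(\tau_1,\kappa)e^{B_{ij}(\tau_2-\tau_1)}$ from Lemma~\ref{lem:last_needed}. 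Put $c:=1-\tfrac{B_{ij}}{\lambda}(1-e^{-\lambda t})$ (note $c=1-B_{ij}\E[T]$). If $c\le e^{-B_{ij}t}$ then $e^{-B_{ij}\tau}\ge c$ on $[0,t]$ and the ratio is trivially $\ge c$; otherwise choose the pivot $\tau^\ast\in(0,t)$ with $e^{-B_{ij}\tau^\ast}=c$ and apply the multiplicative bound on each side of $\tau^\ast$ to obtain, for all $\tau\in[0,t]$, the pointwise inequality $(e^{-B_{ij}\tau}-c)\,g(\tau,\kappa)\ \ge\ c\,g(\tau^\ast,\kappa)\,(1-c\,e^{B_{ij}\tau})$. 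Integrating against $\mu_T$ (a probability measure) and dividing by $g(\tau^\ast,\kappa)\,c>0$ reduces the claim to the scalar inequality $c\int_0^t e^{B_{ij}\tau}\,d\mu_T(\tau)\le 1$.

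To close, I would compute $\int_0^t e^{B_{ij}\tau}\,d\mu_T(\tau)$ in closed form from Lemma~\ref{lem:pdf_of_T} (elementary exponential integrals, with the atom of $\mu_T$ at $\tau=t$ included); after clearing denominators, $c\int_0^t e^{B_{ij}\tau}\,d\mu_T(\tau)\le 1$ becomes $\tfrac{1-e^{-B_{ij}t}}{B_{ij}}\ge\tfrac{1-e^{-\lambda t}}{\lambda}$ when $B_{ij}<\lambda$ and the reverse inequality when $B_{ij}>\lambda$ (the borderline $B_{ij}=\lambda$ falls inside the already-settled case $c\le e^{-B_{ij}t}$), and both follow from the strict monotonicity of $\nu\mapsto\tfrac{1-e^{-\nu t}}{\nu}=\int_0^t e^{-\nu s}\,ds$. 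Assembling the three steps gives $\E[e^{-B_{ij}T}\mid (\S_0,\I_0),H_0]\ge c$, which is the assertion; the substance is entirely in the conditional-independence bookkeeping above and in the pivot argument, while the remaining integral computations are routine.
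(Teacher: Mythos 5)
Your proposal is correct, and its skeleton coincides with the paper's: both reduce the claim, via the same chain of lemmas (Lemmas~\ref{lem:difficult} and~\ref{lem:simple_but_difficult} to decouple $T$ and $K$ and recover the unconditional law $f_T$ of Lemma~\ref{lem:pdf_of_T}; Lemmas~\ref{lem:T_on},~\ref{lem:T_on_general} and~\ref{lem:equation_only} to factor the conditional probability of $(\S_0,\I_0)$ into a $\kappa$-only part times $e^{-B_{ij}T\on}$), to showing that for each fixed $\kappa$ the ratio $\bigl(\int_0^t e^{-B_{ij}\tau}g(\tau,\kappa)f_T(\tau)d\tau\bigr)/\bigl(\int_0^t g(\tau,\kappa)f_T(\tau)d\tau\bigr)$ is at least $c:=1-\tfrac{B_{ij}}{\lambda}(1-e^{-\lambda t})$, where $g(\tau,\kappa)$ is the no-transmission probability of Lemma~\ref{lem:last_needed}. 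Where you diverge is the final analytic step. The paper observes that the multiplicative bound of Lemma~\ref{lem:last_needed} makes the tilted density $\tau\mapsto g(\tau,\kappa)f_T(\tau)$ (after normalization) cross the explicit reference density $\varphi(\tau)=(\lambda-B_{ij})e^{-(\lambda-B_{ij})\tau}+e^{-(\lambda-B_{ij})t}\delta(\tau-t)$ exactly once from above, and then exploits the monotonicity of the weight $e^{-B_{ij}\tau}$ together with $\int_0^t e^{-B_{ij}\tau}\varphi(\tau)d\tau=c$. You instead anchor the same multiplicative bound at the pivot $\tau^\ast$ with $e^{-B_{ij}\tau^\ast}=c$ to get the pointwise inequality $(e^{-B_{ij}\tau}-c)g(\tau,\kappa)\geq c\,g(\tau^\ast,\kappa)(1-ce^{B_{ij}\tau})$ on all of $[0,t]$, and integrate against $f_T$ to reduce everything to the scalar inequality $c\int_0^t e^{B_{ij}\tau}f_T(\tau)d\tau\leq 1$, which is equivalent to the monotonicity of $\nu\mapsto(1-e^{-\nu t})/\nu$; I checked both the pointwise inequality (it follows from $g(\tau,\kappa)\geq g(\tau^\ast,\kappa)ce^{B_{ij}\tau}$ for $\tau\leq\tau^\ast$ and $g(\tau,\kappa)\leq g(\tau^\ast,\kappa)ce^{B_{ij}\tau}$ for $\tau\geq\tau^\ast$) and the closing computation, and they are sound. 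Your route buys something concrete: the case $B_{ij}\geq\lambda$ is dispatched up front by noting $c\leq e^{-B_{ij}t}$ so that $e^{-B_{ij}\tau}\geq c$ pointwise, whereas the paper's comparison density $\varphi$ is a genuine (nonnegative) density only when $\lambda>B_{ij}$, so the paper's argument implicitly restricts to that regime; the paper's version, on the other hand, isolates the cleaner structural statement that the conditional law of $T$ is stochastically dominated by an explicit exponential-type law, which is reused in spirit in Remark~\ref{rem:should_be_the_last}.
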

\begin{proof}
We first use Bayes' rule to note that for any $\kappa\in[0,t]$,
\begin{align}\label{eq:label_unknown}
    &f_{T\mid K=\kappa, (\S(t),\I(t))=(\S_0,\I_0),(a,b)\notin E(t)}(\tau) \cr
    &= \Pr((\S(t),\I(t))=(\S_0,\I_0)\mid T=\tau, K=\kappa, (a,b)\notin E(t))\cdot f_{K\mid T=\tau, (a,b)\notin E(t)}(\kappa)\cdot f_{T\mid (a,b)\notin E(t)}(\tau)\cr
    &\quad\cdot \bigg(\int_0^t \Pr((\S(t),\I(t))=(\S_0,\I_0)\mid T=\tau', K=\kappa, (a,b)\notin E(t))\cdot f_{K\mid T=\tau', (a,b)\notin E(t)}(\kappa)\cdot f_{T\mid (a,b)\notin E(t)}(\tau') d\tau'\bigg)^{-1}.\nonumber\\
\end{align}
We consider each multiplicand one by one. First, we use Lemmas~\ref{lem:difficult} and~\ref{lem:simple_but_difficult} to note that $f_{K\mid T=\tau,(a,b)\notin E(t)}(\kappa)\cdot f_{T\mid (a,b)\notin E(t) }(\tau)=f_K(\kappa)f_T(\tau)$. To deal effectively with the other multiplicands, we let $T\on:=\left(\int_{t-K}^{t-T} 1_{(a,b)}(\tau')d\tau'\right)_+$ denote the total duration of time for which $b$ is in contact with $a$ during the time interval $[t-K,t-T]$, and we observe that  
\begin{align*}
    &\Pr((\S(t),\I(t))=(\S_0,\I_0)\mid T=\tau, K=\kappa, (a,b)\notin E(t))\cr
    &=\int_0^{(\kappa-\tau)_+}\Big( \Pr((\S(t),\I(t))=(\S_0,\I_0)\mid T=\tau, K=\kappa, T\on=\tau\on, (a,b)\notin E(t))\cr
    &\quad\quad\quad\quad\quad\quad\cdot f_{T\on\mid K=\kappa, T=\tau, (a,b)\notin E(t)}(\tau\on)\Big) d\tau\on\cr
    &=\int_0^{(\kappa-\tau)_+}\Big( \Pr((\S(t),\I(t))=(\S_0,\I_0)\mid T=\tau, K=\kappa,T\on=\tau\on, (a,b)\notin E(t), b\stackrel{0,t}{\centernot\rightsquigarrow}a,b\in \I(t^-) )\cr
    &\quad\quad\quad\quad\quad\quad\cdot \Pr(b\stackrel{0,t}{\centernot \rightsquigarrow}a\mid b\in\I(t^-), T=\tau,K=\kappa, T\on=\tau\on, (a,b)\notin E(t))\cr
    &\quad\quad\quad\quad\quad\quad\cdot \Pr(b\in\I(t^-)\mid T=\tau, K=\kappa, T\on=\tau\on, (a,b)\notin E(t) )\cr
    &\quad\quad\quad\quad\quad\quad\cdot f_{T\on\mid K=\kappa, T=\tau, (a,b)\notin E(t)}(\tau\on)\Big) d\tau\on\cr
    &\stackrel{(a)}{=}\int_0^{(\kappa-\tau)_+}\Big( \Pr((\S(t),\I(t))=(\S_0,\I_0)\mid K=\kappa, (a,b)\notin E(t), b\stackrel{0,t}{\centernot\rightsquigarrow}a,b\in \I(t^-) )\cr
    &\quad\quad\quad\quad\quad\quad\cdot e^{-B_{ij}\tau\on}\cdot e^{-\gamma_j\kappa} \cdot f_{T\on\mid K=\kappa, T=\tau, (a,b)\notin E(t)}(\tau\on)\Big) d\tau\on,
\end{align*}
where $(a)$ follows from Lemmas~\ref{lem:T_on} and~\ref{lem:equation_only} and from the modelling assumption that $b$ recovers at rate $\gamma_j$ independently of any edge state.  On substituting the above expression into~\eqref{eq:label_unknown}, we obtain
\begin{align}\label{eq:num_denom}
    &f_{T\mid K=\kappa, (\S(t),\I(t))=(\S_0,\I_0), (a,b)\notin E(t) }(\tau)=\frac{ \left(\int_0^{(\kappa-\tau)_+} e^{-B_{ij}\tau\on} \psi_{\kappa,\tau}(\tau\on)d\tau\on \right) f_T(\tau) }{  \int_0^t \left(\int_0^{(\kappa-\tau')_+} e^{-B_{ij}\tau\on} \psi_{\kappa,\tau'}(\tau\on)d\tau\on \right) f_T(\tau') d\tau'  },
\end{align}
where $\psi_{\kappa,\tau}(\cdot):=f_{T\on\mid K=\kappa,T=\tau,(a,b)\notin E(t)}(\cdot)$. Now, Lemma~\ref{lem:T_on} implies that
\begin{align*}
    &\int_0^{(\kappa-\tau)_+} e^{-B_{ij}\tau\on} \psi_{\kappa,\tau}(\tau\on)d\tau\on\cr
    &=\int_0^{(\kappa-\tau)_+}\Pr\bigg(b\stackrel{0,t}{\centernot\rightsquigarrow}a \,\Big\lvert\, T\on=\tau\on, b\in\I(t^-), K=\kappa, T=\tau, (a,b)\notin E(t) \bigg)\cdot f_{T\on\mid K=\kappa, T=\tau, (a,b)\notin E(t)}(\tau\on) d\tau\on\cr
    &\stackrel{(a)}{=}\int_0^{(\kappa-\tau)_+}\Pr\bigg(b\stackrel{0,t}{\centernot\rightsquigarrow}a \,\Big\lvert\, T\on=\tau\on, b\in\I(t^-), K=\kappa, T=\tau, (a,b)\notin E(t) \bigg)\cr
    &\quad\quad\quad\quad\quad\,\,\cdot f_{T\on\mid b\in\I(t^-), K=\kappa, T=\tau, (a,b)\notin E(t)}(\tau\on) d\tau\on\cr
    &=\Pr\bigg(b\stackrel{0,t}{\centernot\rightsquigarrow}a \,\Big\lvert\, b\in\I(t^-), K=\kappa, T=\tau, (a,b)\notin E(t) \bigg),
\end{align*}
where $(a)$ holds because the recovery time of $b$ is conditionally independent of $T\on$ given $K=\kappa$ (recall that $b$ recovers at rate $\gamma_j$ independently of any edge state (and hence independently of $T\on$), and  $\{b\in\I(t^-)\}$ is precisely the event that the recovery time of $b$ is at least $K$). Hence,~\eqref{eq:num_denom} implies that for any $\tau_1,\tau_2\in [0,t]$ satisfying $\tau_1\leq \tau_2$, we have
\begin{align}\label{eq:fractions}
    &\frac{g(\tau_2)}{g(\tau_1)}=\frac{\Pr\bigg(b\stackrel{0,t}{\centernot\rightsquigarrow}a \,\Big\lvert\, b\in\I(t^-), K=\kappa, T=\tau_2, (a,b)\notin E(t) \bigg) }{\Pr\bigg(b\stackrel{0,t}{\centernot\rightsquigarrow}a \,\Big\lvert\, b\in\I(t^-), K=\kappa, T=\tau_1, (a,b)\notin E(t) \bigg)}\cdot \frac{f_T(\tau_2) }{f_T(\tau_1)},
\end{align}
where $g(\cdot):=f_{T\mid K=\kappa (\S(t),\I(t))=(\S_0,\I_0),(a,b)\notin E(t)} (\cdot)$.

As a consequence of~\eqref{eq:fractions} and Lemma~\ref{lem:last_needed}, we have
\begin{align*}
    \frac{g(\tau_2)}{g(\tau_1)} \leq e^{B_{ij}(\tau_2-\tau_1)} \frac{f_T(\tau_2)}{f_T(\tau_1)}.
\end{align*}
Since $f_{T}(\tau)=\lambda e^{-\lambda\tau}+e^{-\lambda t}\delta(\tau-t)$ for $\tau\in[0,t]$ (see Lemma~\ref{lem:pdf_of_T}), we have the following for all $0\leq \tau_1\leq \tau_2<t$:
\begin{align}\label{eq:first_important}
    g(\tau_2)\leq e^{-(\lambda-B_{ij})(\tau_2-\tau_1)}g(\tau_1),
\end{align}
and for all $0\leq \tau<t$, we have
\begin{align}\label{eq:second_important} 
    \tilde g(t)\leq \frac{e^{-(\lambda-B_{ij})(t-\tau)}}{ \lambda }g(\tau),
\end{align}
where $\tilde g(t)$ scales $\delta(0)$ so that $g(t)=\tilde g(t)\delta(0)$. Since $\delta$ is the Dirac-delta function,~\eqref{eq:second_important} simply means that 
\begin{gather*}
    \Pr(T=t \mid K=\kappa, (\S(t),\I(t))=(\S_0,\I_0),(a,b)\notin E(t))
    \leq \frac{e^{-(\lambda-B_{ij})(t-\tau)}}{ \lambda }g(\tau).
\end{gather*}
Our next goal is to use~\eqref{eq:first_important} and~\eqref{eq:second_important} to show that
\begin{align}\label{eq:which_is_to_be_shown}
    \int_0^t e^{-B_{ij}\tau} g(\tau)d\tau \geq \int_0^t e^{-B_{ij}\tau} \varphi(\tau) d\tau,
\end{align}
where $\varphi$ is the probability density function defined by $\varphi(\tau):=(\lambda-B_{ij})e^{-(\lambda-B_{ij})\tau}+e^{-(\lambda-B_{ij})t}\delta(\tau-t)$ for all $\tau\in[0,t]$ and $\varphi(\tau)=0$ for $\tau>t$. To this end, we compare $g$ with $\varphi$ under the following two cases.

\textit{Case 1: }There exists a time $ \tau_0\in [0,t)$ such that $g(\tau_0)<\varphi(\tau_0)$. In this case,~\eqref{eq:first_important} implies that  for all $\tau\in [\tau_0,t)$, 
\begin{align*}
    g(\tau)&\leq e^{-(\lambda-B_{ij})(\tau-\tau_0 )}g(\tau_0)\cr
    &< e^{-(\lambda-B_{ij})(\tau-\tau_0 )}(\lambda-B_{ij})e^{-(\lambda-B_{ij})(\tau_0)}\cr
    &=\varphi(\tau),
\end{align*}
which means that the set $\{\tau\in[0,t):g(\tau)<\varphi(\tau)\}$ is either $[\tau^*,t)$ or  $(\tau^*,t)$, where $\tau^*:=\inf\{\tau:g(\tau)<\varphi(\tau)\}$. Also, by the definition of $\tau^*$, we have $g(\tau)\geq \varphi(\tau)$ for all $\tau\in[0,\tau^*)$. Next, to compare $g$ and $\varphi$ at $\tau=t$, we use~\eqref{eq:second_important} to note that
\begin{align}\label{eq:separator}
    g(t)&\leq \frac{ e^{-(\lambda-B_{ij} )(t-\tau_0)}  }{\lambda}g(\tau_0)\delta(0)\cr
    &\leq \frac{ e^{-(\lambda-B_{ij} )(t-\tau_0)}  }{\lambda} (\lambda-B_{ij}) e^{-(\lambda-B_{ij})\tau_0}\delta(0)\cr
    &=\left(1-\frac{B_{ij}}{\lambda}\right)e^{-(\lambda-B_{ij})t}\delta(0)\cr
    &\leq \varphi(t).
\end{align}
Thus, $g(\tau)-\varphi(\tau)\geq 0$ for all $\tau\in[0,\tau^*)$ and $g(\tau)-\varphi(\tau)\leq 0$ for all $\tau\in(\tau^*,t]$. Now, since $g$ and $\varphi$ are both PDFs, we must have $\int_0^\infty (g(\tau)-\varphi(\tau))d\tau=0$ or equivalently,
$$
    \int_0^{\tau^*} (g(\tau)-\varphi(\tau))d\tau=\int_{\tau^*}^\infty (\varphi(\tau)-g(\tau))d\tau.
$$
Since both the integrands above are non-negative, we have
\begin{align*}
    &\int_0^{\tau^*} e^{-B_{ij}\tau} (g(\tau)-\varphi(\tau))d\tau\cr
    &\geq e^{-B_{ij}\tau^*}\int_0^{\tau^*}(g(\tau)-\varphi(\tau))d\tau\cr
    &=e^{-B_{ij}\tau^*}\int_{\tau^*}^\infty (\varphi(\tau)-g(\tau))d\tau\cr
    &\geq \int_{\tau^*}^\infty e^{-B_{ij}\tau}(\varphi(\tau)-g(\tau))d\tau.
\end{align*}
Adding $\int_{\tau^*}^\infty e^{-B_{ij}\tau} g(\tau)d\tau+ \int_0^{\tau^*} e^{-B_{ij}\tau}\varphi(\tau)d\tau$ to both sides now yields~\eqref{eq:which_is_to_be_shown}.

\textit{Case 2:} $g(\tau)\geq \varphi(\tau)$ for all $\tau\in [0,t)$. In this case, we can simply set $\tau^*=t$ and repeat the arguments following ~\eqref{eq:separator} in Case 1 to show that~\eqref{eq:which_is_to_be_shown} holds. 

Next, we use the definition of $\varphi$ to evaluate $\int_0^t e^{-B_{ij}\tau }\varphi(\tau)d\tau$, and we then restate \eqref{eq:which_is_to_be_shown} as follows:
\begin{align}
    \int_0^t e^{-B_{ij}\tau} f_{T\mid K=\kappa,(\S(t),\I(t))=(\S_0,\I_0),(a,b)\notin E(t)}(\tau)d\tau\geq 1-\frac{B_{ij} }{\lambda}(1-e^{-\lambda t}).
\end{align}
Since this holds for all $\kappa\in[0,t)$, the assertion of  the lemma follows. 
\end{proof}

Using arguments very similar to the proof above, we can prove the following result.

\begin{lemma}\label{lem:final_integral_lemma}
Let $T$ denote the random time defined earlier. Then
\begin{gather*}
    \int_0^t e^{-B_{ij}\tau}f_{T\mid (\S(t),\I(t)) = (\S_0,\I_0), (a,b)\in E(t)}(\tau)d\tau \geq  1 - \frac{B_{ij} }{\lambda}(1-e^{-\lambda t}).
\end{gather*}
\end{lemma}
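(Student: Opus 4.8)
\textbf{Proof proposal for Lemma~\ref{lem:final_integral_lemma}.}
The plan is to mirror, essentially line for line, the proof of Lemma~\ref{lem:semi-final_integral_lemma}, replacing the conditioning event $\{(a,b)\notin E(t)\}$ by $\{(a,b)\in E(t)\}$ everywhere. There is exactly one structural difference to absorb: when $(a,b)\in E(t)$, the edge $(a,b)$ is known to be present throughout the entire interval $[t-T,t]$ (no update occurs there and the last update set $1_{(a,b)}=1$), whereas on $[t-K,t-T]$ it is present for a total duration $T\on=\left(\int_{t-K}^{t-T}1_{(a,b)}(\sigma)\,d\sigma\right)_+$, as before. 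Hence the relevant ``infectious-and-connected'' duration for the pair $(a,b)$ over $[t-K,t]$ is $\widehat{T}\on:=T\on+\min\{T,K\}$ instead of $T\on$, and once this replacement is in place the rest of the argument is unchanged.

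First I would record the distributional inputs. By Lemmas~\ref{lem:difficult} and~\ref{lem:simple_but_difficult}, $T$ and $1_{(a,b)}(t)$ are independent and $K$ is independent of $(T,1_{(a,b)}(t))$, so $f_{K\mid T=\tau,(a,b)\in E(t)}(\kappa)\,f_{T\mid (a,b)\in E(t)}(\tau)=f_K(\kappa)f_T(\tau)$, with $f_T$ in closed form from Lemma~\ref{lem:pdf_of_T}. Next I would prove the $\{(a,b)\in E(t)\}$-analogs of Lemmas~\ref{lem:T_on}--\ref{lem:T_on_general} and of Lemma~\ref{lem:equation_only}. The same telescoping-product computation built on Lemma~\ref{lem:obvious_two}, now carrying one extra factor $e^{-B_{ij}\min\{\tau,\kappa\}}$ contributed by the segment $[t-\tau,t]$ on which the edge exists and $b$ is infectious, gives
$$
\Pr\left(b\stackrel{0,t}{\centernot\rightsquigarrow}a\mid (K,T,T\on)=(\kappa,\tau,\tau\on),\,b\in\I(t^-),\,(a,b)\in E(t)\right)=e^{-B_{ij}\left(\tau\on+\min\{\tau,\kappa\}\right)},
$$
and the $(a,b)$-agnostic-superstate argument of Lemma~\ref{lem:equation_only} shows verbatim that, given $K$, $\{b\stackrel{0,t}{\centernot\rightsquigarrow}a\}$, $\{b\in\I(t^-)\}$ and $\{(a,b)\in E(t)\}$, the law of $(\S(t),\I(t))$ does not further depend on $(T,T\on)$. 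Feeding these into Bayes' rule exactly as in~\eqref{eq:label_unknown}--\eqref{eq:num_denom} identifies $g(\tau):=f_{T\mid K=\kappa,(\S(t),\I(t))=(\S_0,\I_0),(a,b)\in E(t)}(\tau)$, up to a $\tau$-independent normalizing constant, with $\Pr\left(b\stackrel{0,t}{\centernot\rightsquigarrow}a\mid b\in\I(t^-),K=\kappa,T=\tau,(a,b)\in E(t)\right)f_T(\tau)$.

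Then I would establish the monotonicity estimate that plays the role of Lemma~\ref{lem:last_needed}: for $0\le\tau_1\le\tau_2\le t$,
$$
1\ \le\ \frac{\Pr\left(b\stackrel{0,t}{\centernot\rightsquigarrow}a\mid b\in\I(t^-),K=\kappa,T=\tau_2,(a,b)\in E(t)\right)}{\Pr\left(b\stackrel{0,t}{\centernot\rightsquigarrow}a\mid b\in\I(t^-),K=\kappa,T=\tau_1,(a,b)\in E(t)\right)}\ \le\ e^{B_{ij}(\tau_2-\tau_1)},
$$
obtained by conditioning on $T\on$ (now over $[t-\kappa,t-\tau_2]$) and splitting the remaining contact-while-infectious time as in the proof of Lemma~\ref{lem:last_needed}; increasing $T$ from $\tau_1$ to $\tau_2$ changes the extra term $\min\{T,K\}$ by at most $\tau_2-\tau_1$ (it is $1$-Lipschitz) while only removing on-time from the $T\on$-segment, so $|\widehat{T}\on^{(\tau_2)}-\widehat{T}\on^{(\tau_1)}|\le\tau_2-\tau_1$, which is exactly what the bound needs. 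Combining this with $f_T(\tau)=\lambda e^{-\lambda\tau}+e^{-\lambda t}\delta_D(\tau-t)$ gives $g(\tau_2)\le e^{-(\lambda-B_{ij})(\tau_2-\tau_1)}g(\tau_1)$ for $\tau_1\le\tau_2<t$ together with $\widetilde g(t)\le \lambda^{-1}e^{-(\lambda-B_{ij})(t-\tau)}g(\tau)$; from here the single-crossing comparison of $g$ with $\varphi(\tau)=(\lambda-B_{ij})e^{-(\lambda-B_{ij})\tau}+e^{-(\lambda-B_{ij})t}\delta_D(\tau-t)$, carried out exactly as in Cases~1 and~2 of the proof of Lemma~\ref{lem:semi-final_integral_lemma}, yields $\int_0^t e^{-B_{ij}\tau}g(\tau)\,d\tau\geq \int_0^t e^{-B_{ij}\tau}\varphi(\tau)\,d\tau=1-\frac{B_{ij}}{\lambda}(1-e^{-\lambda t})$. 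Since this is uniform in $\kappa\in[0,t)$, removing the conditioning on $K$ gives the claim.

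The main obstacle is purely a bookkeeping one: verifying that inserting the always-present segment $[t-T,t]$ (equivalently, replacing $T\on$ by $\widehat{T}\on=T\on+\min\{T,K\}$) does not spoil the Lipschitz-type control of the transmission-avoidance probability as a function of $T$, so that the analog of Lemma~\ref{lem:last_needed} still holds with the same constant $e^{B_{ij}(\tau_2-\tau_1)}$. Once that one point is checked, the exponential-decay bounds on $g$ and the crossing argument against $\varphi$ are formally identical to the $\{(a,b)\notin E(t)\}$ case.
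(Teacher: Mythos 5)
Your proposal is correct and follows exactly the route the paper intends (the paper itself only remarks that Lemma~\ref{lem:final_integral_lemma} follows by "arguments very similar" to Lemma~\ref{lem:semi-final_integral_lemma}): you correctly identify that the only structural change is replacing $T\on$ by $T\on+\min\{T,K\}$, and that this quantity changes by at most $\tau_2-\tau_1$ when $T$ moves from $\tau_1$ to $\tau_2$, so the upper bound $e^{B_{ij}(\tau_2-\tau_1)}$ on the ratio of no-transmission probabilities — the only part of Lemma~\ref{lem:last_needed} actually used downstream — survives, and the decay estimate on $g$ and the single-crossing comparison with $\varphi$ go through verbatim. One small caveat: the lower bound of $1$ you state for that ratio is reversed in the $(a,b)\in E(t)$ case (larger $T$ now means \emph{more} guaranteed contact time in the final segment, so the ratio is at most $1$ and at least $e^{-B_{ij}(\tau_2-\tau_1)}$), but since only the upper bound feeds into the proof, this does not affect the validity of your argument.
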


\begin{remark}\label{rem:should_be_the_last}
In the proof of Lemma~\ref{lem:semi-final_integral_lemma}, if, instead of using~\eqref{eq:fractions} along with the upper bound in Lemma~\ref{lem:last_needed}, we had used~\eqref{eq:fractions} along with the lower bound in Lemma~\ref{lem:last_needed}, we would have obtained
$$
    \frac{g(\tau_2)}{g(\tau_1)}\geq \frac{f_T(\tau_2)}{f_{T}(\tau_1)}=e^{-\lambda(\tau_2-\tau_1)}.
$$
In addition, if we had subsequently replaced $t$ with a generic $\tau\in [0,t)$ and the weighting function $[0,\infty)\ni\tau\rightarrow e^{-B_{ij}\tau}\in (0,\infty)$ by the constant function $1$, and if we had defined $\varphi$ by $\varphi(\tau):=\lambda e^{-\lambda\tau}+e^{-\lambda t}\delta(\tau-t)$, then using the same arguments but with reversed inequality signs, we would have been able to prove that
$$
    \int_0^\tau 1\cdot g(\tau')d\tau'\leq \int_0^\tau 1\cdot \varphi(\tau')d\tau'.
$$
Since the integral on the left-hand-side is $\Pr(T\leq \tau\mid K=\kappa,(\S(t),\I(t))=(\S_0,\I_0),(a,b)\notin E(t))$ and since the right-hand-side evaluates to $1-e^{-\lambda \tau}$, we conclude that
$$
    \Pr(T\leq \tau\mid K=\kappa,(\S(t),\I(t))=(\S_0,\I_0),(a,b)\notin E(t))\leq 1 - e^{-\lambda \tau}
$$
for all $\tau\in[0,t)$ and all $\kappa\in[0,t]$.
\end{remark}

\subsection*{Some Auxiliary Lemmas}

In addition to the above results, the proof of Theorem~\ref{thm:main} relies on the following lemmas, which we reproduce from~\cite{armbruster2017elementary}.

\begin{lemma}\label{lem:sum_var}
For random variables $Y$ and $Z$, we have $\variance[Y+Z]\leq 2(\variance[Y]+\variance[Z])$.
\end{lemma}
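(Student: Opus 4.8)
The plan is to reduce the inequality to the elementary pointwise bound $(u+v)^2 \le 2u^2 + 2v^2$ applied to the centered versions of $Y$ and $Z$. First I would introduce $\tilde Y := Y - \E[Y]$ and $\tilde Z := Z - \E[Z]$, so that $\variance[Y] = \E[\tilde Y^2]$, $\variance[Z] = \E[\tilde Z^2]$, and $\variance[Y+Z] = \E[(\tilde Y + \tilde Z)^2]$ by definition of the variance (and linearity of expectation, which gives $\E[Y+Z] = \E[Y] + \E[Z]$).

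Next I would invoke the inequality $(u+v)^2 \le 2u^2 + 2v^2$, valid for all reals $u,v$ (it follows from $(u-v)^2 \ge 0$). Substituting $u = \tilde Y(\omega)$ and $v = \tilde Z(\omega)$ pointwise and taking expectations — using monotonicity and linearity of expectation — yields
$$
\variance[Y+Z] = \E[(\tilde Y + \tilde Z)^2] \le \E[2\tilde Y^2 + 2\tilde Z^2] = 2\E[\tilde Y^2] + 2\E[\tilde Z^2] = 2(\variance[Y] + \variance[Z]),
$$
which is the claim. An equivalent route, which I would mention as an alternative, is to expand $\variance[Y+Z] = \variance[Y] + \variance[Z] + 2\,\mathrm{Cov}[Y,Z]$ and bound the covariance term via Cauchy--Schwarz followed by AM--GM: $\mathrm{Cov}[Y,Z] \le \sqrt{\variance[Y]\variance[Z]} \le \tfrac12(\variance[Y] + \variance[Z])$.

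There is no real obstacle here; the only point requiring a word of care is the implicit assumption that $Y$ and $Z$ have finite second moments (otherwise the variances, and the statement, are vacuous or infinite), so I would note that the inequality holds trivially when either side is infinite and otherwise proceed as above. This lemma is quoted from~\cite{armbruster2017elementary} and is used downstream only to control the variance of sums arising in the proof of Theorem~\ref{thm:main}, so a two-line argument suffices.
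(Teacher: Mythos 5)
Your proof is correct. The paper itself gives no proof of this lemma — it is simply quoted from the cited reference \cite{armbruster2017elementary} — so there is nothing to compare against; your argument via the pointwise bound $(u+v)^2 \le 2u^2 + 2v^2$ applied to the centered variables $\tilde Y$ and $\tilde Z$ is the standard one, and the alternative route through $\mathrm{Cov}[Y,Z] \le \tfrac12(\variance[Y]+\variance[Z])$ is equally valid. Your remark about finite second moments is a reasonable bit of care, though in the paper's application all the random variables involved are bounded in $[0,1]$, so the issue never arises.
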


\begin{lemma} \label{lem:sq_var}
For a random variable $Y\in [0,1]$, we have $\variance[Y^2]\leq 4\variance[Y]$.
\end{lemma}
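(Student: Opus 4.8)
\textbf{Proof proposal for Lemma~\ref{lem:sq_var}.}
The plan is to exploit the elementary identity that expresses a variance in terms of an independent copy of the random variable. Let $Y' $ be an independent copy of $Y$ (so $Y'$ has the same distribution as $Y$ and is independent of it). Then, by expanding the square and using $\E[Y]=\E[Y']$ together with independence, one has for any integrable function $f$
\begin{align*}
    \variance[f(Y)] = \tfrac12\,\E\!\left[(f(Y)-f(Y'))^2\right].
\end{align*}
First I would record this identity (a one-line expansion), and then apply it with $f(y)=y^2$.

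Next, I would factor $Y^2-Y'^2=(Y-Y')(Y+Y')$, so that
\begin{align*}
    \variance[Y^2]=\tfrac12\,\E\!\left[(Y-Y')^2(Y+Y')^2\right].
\end{align*}
Here is the only place the hypothesis $Y\in[0,1]$ enters: since $Y,Y'\in[0,1]$ we have $0\le Y+Y'\le 2$, hence $(Y+Y')^2\le 4$ pointwise. Substituting this bound gives
\begin{align*}
    \variance[Y^2]\le \tfrac12\,\E\!\left[4(Y-Y')^2\right]=2\,\E\!\left[(Y-Y')^2\right]=4\,\variance[Y],
\end{align*}
where the last equality is the same identity applied to $f(y)=y$. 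This completes the argument.

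There is essentially no obstacle here — the result is a short computation — but if one prefers to avoid introducing an independent copy, an equivalent route is to use that $\variance[Z]=\min_{a\in\R}\E[(Z-a)^2]\le \E[(Z-c^2)^2]$ with $Z=Y^2$ and $c=\E[Y]$, then bound $(Y^2-c^2)^2=(Y-c)^2(Y+c)^2\le 4(Y-c)^2$ using $Y,c\in[0,1]$, and take expectations to get $\variance[Y^2]\le 4\,\E[(Y-c)^2]=4\,\variance[Y]$. Either way the key step is the pointwise bound $(Y+Y')^2\le 4$ (resp.\ $(Y+c)^2\le 4$), which is exactly where the range restriction on $Y$ is used.
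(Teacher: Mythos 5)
Your proof is correct. Note that the paper itself gives no proof of this lemma at all: it is stated in the appendix as one of several auxiliary facts reproduced verbatim from the cited reference of Armbruster and Beck, so there is no in-paper argument to compare against. Both of your routes are sound and self-contained. The symmetrization identity $\variance[f(Y)]=\tfrac12\E[(f(Y)-f(Y'))^2]$ is verified by a one-line expansion using independence and equality of distributions, the factorization $Y^2-Y'^2=(Y-Y')(Y+Y')$ together with the pointwise bound $(Y+Y')^2\le 4$ is exactly where the hypothesis $Y\in[0,1]$ is needed, and the final constant $4$ comes out right: $\tfrac12\cdot 4\cdot\E[(Y-Y')^2]=2\cdot 2\variance[Y]$. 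The second route via $\variance[Z]\le\E[(Z-c^2)^2]$ with $c=\E[Y]$ is equally valid and arguably even more elementary, since it avoids introducing the independent copy. Either version would serve as a complete proof if the paper chose to include one.
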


\begin{lemma}\label{lem:split}
For random variables $Y$ and $Z$ in $[0,1]$,
$$
    |\mathrm{E}[Y Z]-\mathrm{E}[Y] \mathrm{E}[Z]| \leq(\operatorname{Var}[Y]+\operatorname{Var}[Z]) / 2
$$
$$
    \left|\mathrm{E}\left[Y^{2} Z\right]-\mathrm{E}[Y]^{2} \mathrm{E}[Z]\right| \leq 2(\operatorname{Var}[Y]+\operatorname{Var}[Z]).
$$
\end{lemma}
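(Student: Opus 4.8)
The plan is to derive both inequalities from the Cauchy--Schwarz inequality together with the arithmetic--geometric mean bound $2\sqrt{uv}\le u+v$, so the argument is short and purely computational. For the first inequality I would observe that $\E[YZ]-\E[Y]\E[Z]=\mathrm{Cov}(Y,Z)=\E[(Y-\E[Y])(Z-\E[Z])]$, apply Cauchy--Schwarz to get $|\mathrm{Cov}(Y,Z)|\le\sqrt{\variance[Y]\,\variance[Z]}$, and then invoke $2\sqrt{uv}\le u+v$ with $u=\variance[Y]$, $v=\variance[Z]$ to conclude $|\mathrm{Cov}(Y,Z)|\le\tfrac12(\variance[Y]+\variance[Z])$. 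Here the hypotheses $Y,Z\in[0,1]$ are not even needed.

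For the second inequality the key step is the algebraic splitting
$$
    \E[Y^2Z]-\E[Y]^2\E[Z]=\bigl(\E[Y^2Z]-\E[Y^2]\E[Z]\bigr)+\bigl(\E[Y^2]-\E[Y]^2\bigr)\E[Z]=\mathrm{Cov}(Y^2,Z)+\variance[Y]\,\E[Z],
$$
after which I would bound the two summands separately. For the second summand, $Z\in[0,1]$ gives $\E[Z]\in[0,1]$, hence $0\le\variance[Y]\,\E[Z]\le\variance[Y]$. For the first summand, Cauchy--Schwarz gives $|\mathrm{Cov}(Y^2,Z)|\le\sqrt{\variance[Y^2]\,\variance[Z]}$, and then Lemma~\ref{lem:sq_var} (which uses $Y\in[0,1]$) yields $\variance[Y^2]\le4\variance[Y]$, so that $|\mathrm{Cov}(Y^2,Z)|\le 2\sqrt{\variance[Y]\,\variance[Z]}\le\variance[Y]+\variance[Z]$ by another application of $2\sqrt{uv}\le u+v$. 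Adding the two estimates gives $|\E[Y^2Z]-\E[Y]^2\E[Z]|\le 2\variance[Y]+\variance[Z]\le 2(\variance[Y]+\variance[Z])$, as claimed.

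I do not expect any genuine obstacle; the one point to be careful about is in the last display, where $2\sqrt{uv}\le u+v$ should be applied directly to $\sqrt{4\variance[Y]\,\variance[Z]}$ rather than first passing through the weaker intermediate bound $\tfrac12(\variance[Y^2]+\variance[Z])$, since only the former keeps the overall constant equal to $2$. It is also worth noting that of the two auxiliary facts immediately preceding this lemma only Lemma~\ref{lem:sq_var} enters the proof; Lemma~\ref{lem:sum_var} is used elsewhere in the derivation of Theorem~\ref{thm:main}.
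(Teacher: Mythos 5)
The paper does not actually prove this lemma: it is reproduced without proof from~\cite{armbruster2017elementary}, so there is no in-paper argument to compare against. Your proof is correct and self-contained. The first bound is exactly Cauchy--Schwarz applied to $\mathrm{Cov}(Y,Z)$ followed by $2\sqrt{uv}\le u+v$, and you are right that the hypothesis $Y,Z\in[0,1]$ is not needed there. For the second bound, the decomposition $\E[Y^2Z]-\E[Y]^2\E[Z]=\mathrm{Cov}(Y^2,Z)+\variance[Y]\,\E[Z]$, the triangle inequality, the bound $\variance[Y^2]\le 4\variance[Y]$ from Lemma~\ref{lem:sq_var}, and $0\le\E[Z]\le1$ combine to give $2\variance[Y]+\variance[Z]\le 2(\variance[Y]+\variance[Z])$ exactly as you describe. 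Your cautionary remark about the order of operations is also correct and worth keeping: if one first writes $\sqrt{\variance[Y^2]\variance[Z]}\le\tfrac12(\variance[Y^2]+\variance[Z])$ and only then invokes $\variance[Y^2]\le4\variance[Y]$, the final tally is $3\variance[Y]+\tfrac12\variance[Z]$, which is not dominated by $2(\variance[Y]+\variance[Z])$ when $\variance[Y]>\tfrac32\variance[Z]$; pulling the factor $\sqrt{4}=2$ out before applying the AM--GM step is what preserves the stated constant.
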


The following result is a straightforward consequence of the above lemmas.
\begin{corollary}\label{cor:straightforward}
For non-negative random variables $Y$ and $Z$ satisfying $0\leq Y+Z\leq 1$, we have
$$
\variance[YZ]\leq 8(\variance[Y]+\variance[Z]).
$$
\end{corollary}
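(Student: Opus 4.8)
The plan is to express the bilinear quantity $YZ$ in terms of squares of affine combinations of $Y$ and $Z$, so that Lemmas~\ref{lem:sum_var} and~\ref{lem:sq_var} can be applied essentially verbatim. First I would record the elementary consequences of the hypotheses $Y,Z\ge 0$ and $Y+Z\le 1$: we get $Y,Z\in[0,1]$, hence $Y+Z\in[0,1]$, and $|Y-Z|\le Y+Z\le 1$, so $|Y-Z|\in[0,1]$ as well. These membership facts are precisely what is needed to invoke Lemma~\ref{lem:sq_var}.

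Next I would use the algebraic identity $4YZ=(Y+Z)^2-(Y-Z)^2$. Since $\variance[4YZ]=16\,\variance[YZ]$, Lemma~\ref{lem:sum_var} gives $16\,\variance[YZ]\le 2\left(\variance[(Y+Z)^2]+\variance[(Y-Z)^2]\right)$. For the first summand, Lemma~\ref{lem:sq_var} applied to $Y+Z\in[0,1]$ yields $\variance[(Y+Z)^2]\le 4\,\variance[Y+Z]$, and Lemma~\ref{lem:sum_var} gives $\variance[Y+Z]\le 2(\variance[Y]+\variance[Z])$; together these bound $\variance[(Y+Z)^2]\le 8(\variance[Y]+\variance[Z])$.

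The main obstacle is the second summand, $\variance[(Y-Z)^2]$, because $Y-Z$ itself need not lie in $[0,1]$, so Lemma~\ref{lem:sq_var} does not apply to it directly. The remedy is to apply that lemma to $W:=|Y-Z|\in[0,1]$, for which $W^2=(Y-Z)^2$; this gives $\variance[(Y-Z)^2]\le 4\,\variance[|Y-Z|]$. Then I would note the elementary bound $\variance[|Y-Z|]\le\variance[Y-Z]$, which holds because $\E[\,|Y-Z|^2\,]=\E[(Y-Z)^2]$ while $(\E|Y-Z|)^2\ge(\E[Y-Z])^2$, and finally $\variance[Y-Z]=\variance[Y+(-Z)]\le 2(\variance[Y]+\variance[Z])$ by Lemma~\ref{lem:sum_var}. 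Hence $\variance[(Y-Z)^2]\le 8(\variance[Y]+\variance[Z])$.

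Combining the two bounds gives $16\,\variance[YZ]\le 2\cdot(8+8)\,(\variance[Y]+\variance[Z])$, i.e. $\variance[YZ]\le 2(\variance[Y]+\variance[Z])\le 8(\variance[Y]+\variance[Z])$, which is in fact stronger than the stated claim. As a shorter alternative one can bypass the identity: bound $\variance[YZ]\le\E[(YZ-\E[Y]\E[Z])^2]$, write $YZ-\E[Y]\E[Z]=Y(Z-\E[Z])+\E[Z](Y-\E[Y])$, and use $0\le Y\le 1$, $0\le\E[Z]\le 1$ together with $(a+b)^2\le 2a^2+2b^2$ to obtain the same bound directly.
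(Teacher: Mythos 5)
Your proof is correct, and both of your arguments differ from the paper's own in instructive ways. The paper's proof uses the decomposition $2YZ=(Y+Z)^2-(Y^2+Z^2)$, so that Lemma~\ref{lem:sq_var} only ever needs to be applied to $Y$, $Z$, and $Y+Z$, each of which lies in $[0,1]$ by hypothesis; chasing the constants through Lemmas~\ref{lem:sum_var} and~\ref{lem:sq_var} then yields exactly the stated factor of $8$. Your first argument instead uses $4YZ=(Y+Z)^2-(Y-Z)^2$, which forces you to deal with the fact that $Y-Z$ need not lie in $[0,1]$; your fix --- applying Lemma~\ref{lem:sq_var} to $|Y-Z|\in[0,1]$ and then using $\variance[|Y-Z|]\le\variance[Y-Z]$ (valid since the second moments agree while $(\E|Y-Z|)^2\ge(\E[Y-Z])^2$) --- is sound, and the extra factor of $4$ gained from the polarization identity buys you the sharper constant $2$ in place of $8$. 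Your second, lemma-free argument (bounding $\variance[YZ]$ by $\E[(YZ-\E[Y]\E[Z])^2]$ and splitting $YZ-\E[Y]\E[Z]=Y(Z-\E[Z])+\E[Z](Y-\E[Y])$) is the most elementary of the three and also gives the constant $2$; it is arguably the cleanest route, though it departs entirely from the paper's strategy of reusing Lemmas~\ref{lem:sum_var} and~\ref{lem:sq_var}. Either version of your argument establishes the corollary, and indeed a strengthening of it.
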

\begin{proof}
Note that
\begin{align*}
    4\variance[YZ] = \variance[2YZ] &= \variance[(Y+Z)^2 + (-1)(Y^2+Z^2)]\cr
    &\stackrel{(a)}{\leq} 2(\variance[(Y+Z)^2] + (-1)^2\variance[Y^2 + Z^2])\cr
    &\stackrel{(b)}{\leq} 2( 4\variance[Y+Z] + 2( \variance[Y^2] + \variance[Z^2] ) )\cr
    &\stackrel{(c)}{\leq} 2( 4\variance[Y+Z] + 2( 4\variance[Y] + 4\variance[Z] ) ),
\end{align*}
where (a) follows from Lemma~\ref{lem:sum_var}, (b) from both Lemma~\ref{lem:sum_var} and Lemma~\ref{lem:sq_var}, and (c) from Lemma~\ref{lem:sq_var} alone. Thus,
$$
\variance[YZ] \leq 2(\variance[Y+Z] + 2(\variance[Y]+\variance[Z]))\leq 8(\variance[Y] + \variance[Z]),
$$
where the last inequality follows from Lemma~\ref{lem:sum_var}.
\end{proof}

\subsection*{Proof of Theorem~\ref{thm:main}}
\begin{proof}
The proof is based on Proposition~\ref{prop:bounds} and it follows the approach used in~\cite{armbruster2017elementary}. We first modify Equations~\eqref{item:init_1} -~\eqref{item:init_4} (Proposition~\ref{prop:basic}) by expressing the expectations of cross-terms such as $\E[s_i\beta_j]$ in terms of expectations of individual terms such as $\E[s_i^2]$ and $\E[\beta_j]$. To begin, we apply Lemma~\ref{lem:split} to $\E[s_i(t)\beta_j(t)]$ and obtain
\begin{align*}
    |\E[s_i(t)\beta_j(t)] - \E[s_i(t)]\E[\beta_j(t)]|\leq  \frac{1}{2}(\variance[s_i(t)]+\variance[\beta_j(t)]).
\end{align*}
Therefore, there exists a function $h_{i,j,1,n}:[0,\infty)\rightarrow[-1,1]$ such that 
\begin{align*}
    \E[s_i(t)\beta_j(t)] &= \E[s_i(t)]\E[\beta_j(t)] + \frac{h_{i,j,1,n}(t)}{2}(\variance[s_i(t)]+\variance[\beta_j(t)]).
\end{align*}

Similarly, we can use Lemma~\ref{lem:split} to show that there exists a function $h_{i,j,2,n}:[0,\infty)\rightarrow[-1,1]$ such that 
\begin{align*}
    \E[s_i^2(t)\beta_j(t)]&= \E[s_i(t)]^2\E[\beta_j(t)]+ 2h_{i,j,2,n}(t)(\variance[s_i(t)]+\variance[\beta_j(t)]).
\end{align*}
Next, we use Corollary~\ref{cor:straightforward} to express $\E[s_i(t)\beta_j(t)\beta_i(t)]$ as
\begin{align*}
    \E[s_i(t)\beta_j(t)\beta_i(t)]&= \E[s_i(t)\beta_j(t)]\E[\beta_i(t)] + \frac{h_{i,j,5,n}(t) }{2}(\variance[s_i(t)\beta_j(t)] + \variance[\beta_i(t)])\\
    &= \Big(  \E[s_i(t)]\E[\beta_j(t)]+ \frac{h_{i,j,1,n}(t)}{2}(\variance[s_i(t)]+\variance[\beta_j(t)]) \Big)\E[\beta_i(t)]\cr
    &\quad+ \frac{h_{i,j,5,n}(t)}{2}( h_{i,j,6,n}(t) (8\variance[s_i(t)]+8\variance[\beta_j(t)]) +\variance[\beta_i(t)]),
\end{align*}
where $h_{i,j,5,n}(t)\in[-1,1]$ and $h_{i,j,6,n}(t)\in[0,1]$.

We thus obtain the following relations:
\begin{enumerate} [(I)]
    \item \label{item:simplify_1} 
    $\E[s_i\beta_j] = \E[s_i]\E[\beta_j] + \frac{h_{i,j,1,n}}{2}(\variance[s_i]+\variance[\beta_j])$,
    \item \label{item:simplify_2}
    $\E[s_i^2\beta_j]= \E[s_i]^2\E[\beta_j] + 2h_{i,j,2,n}(\variance[s_i]+\variance[\beta_j])$,
    \item \label{item:simplify_3} 
        $\E[s_i\beta_j \beta_i ] = \Big(  \E[s_i ]\E[\beta_j ] + \frac{h_{i,j,1,n} }{2}(\variance[s_i ]+\variance[\beta_j ]) \Big)+ \frac{h_{i,j,5,n} }{2}( h_{i,j,6,n}  (8\variance[s_i ]+8\variance[\beta_j ])+\variance[\beta_i ]).$
\end{enumerate}

To handle terms of the form $B_{ij}\E[n\cdot\chi_{ij}(t,\S, \I)\cdot U]$ where $U$ is some random variable, we use Proposition~\ref{prop:bounds} to obtain
\begin{align*}
    A_{ij}\left(1-\frac{B_{ij} }{\lambda^{(n)} }(1-e^{-\lambda^{(n)} t}) \right)\E[U]\leq B_{ij} \E[n\chi_{ij}(t,\S,\I) U ] \leq A_{ij}\E[U].
\end{align*}
As a result, if $\Pr(|U|\leq 1) = 1$, then there exists a function $h_{i,j,U,n}:[0,\infty)\rightarrow [0, B_{ij}A_{ij}]$ such that
\begin{align}\label{eq:scary}
    B_{ij}\E[n\chi_{ij}(t, \S, \I)U] = A_{ij}\E[U] - \frac{h_{i,j,U,n}(t)}{\lambda^{(n)}}.
\end{align}
By making the above substitutions in (\ref{item:init_1}) - (\ref{item:init_4}), and by using the identity $\variance[Y]'=\E[Y^2]'-2\E[Y]\E[Y]'$, we obtain the following differential equations:
\begin{enumerate} [(I)]
    \item $\E[s_i]'=\sum_{j=1}^m \frac{h_{i,j, 7, n}}{\lambda^{(n)}} -\sum_{j=1}^m A_{ij}\left( \E[s_i]\E[\beta_j] + \frac{h_{i,j,1,n}}{2}(\variance[s_i]+\variance[\beta_j]) \right)$,
    \item
    $\E[\beta_i]'=\sum_{j=1}^m A_{ij}\left( \E[s_i]\E[\beta_j] + \frac{h_{i,j,1,n}}{2}(\variance[s_i]+\variance[\beta_j]) \right)-\sum_{j=1}^m \frac{h_{i,j, 7, n}}{\lambda^{(n)}} - \gamma_i\E[\beta_i]$,
    \item \begin{align*}
        \variance[s_i]'&=-2\sum_{j=1}^m A_{ij} \big( \E[s_i]^2 \E[\beta_j] + 2h_{i,j,2,n}(\variance[s_i] +\variance[\beta_j] )  \big) \cr
        &\quad+ \sum_{j=1}^m A_{ij} \left( \E[s_i]\E[\beta_j] + \frac{h_{i,j,1,n}}{2}( \variance[s_i] +\variance[\beta_j])  \right) \left (2\E[s_i] +\frac{1}{n} \right)\cr
        &\quad+\sum_{j=1}^m\left(\frac{2h_{i,j,8,n} }{\lambda^{(n)}} - \frac{h_{i,j,7,n} }{n\lambda^{(n)} } - 2\E[s_i]\frac{h_{i,j,7,n}}{\lambda^{(n)}}\right) ,
    \end{align*}
    \item \begin{align*}
        \variance[\beta_i]'&=2\sum_{j=1}^m A_{ij}\bigg( \E[s_i] \E[\beta_j]\E[\beta_i]+\frac{h_{i,j,1,n}}{2} (\variance[s_i] +\variance[\beta_j])\E[\beta_i] \cr
        &\quad\quad\quad\quad\quad\quad+ \frac{h_{i,j,5,n}}{2}\left( h_{i,j,6,n} ( 8\variance[s_i] +8\variance[\beta_j] ) +\variance[\beta_i]  \right)                     \bigg)\cr
        &\quad+\sum_{j=1}^m A_{ij}\left( \frac{1}{n} - 2\E[\beta_i] \right)\Big( \E[s_i]\E[\beta_j]\frac{ h_{i,j,1,n} }{2} (\variance[s_i] +\variance[\beta_j] )  \Big) - 2\gamma_i\variance[\beta_i] + \gamma_i\frac{ \E[\beta_i]}{n}\cr
        &\quad-\sum_{j=1}^m \left( \frac{2h_{i,j,9,n} }{\lambda^{(n)}} + \frac{h_{i,j,7,n} }{n\lambda^{(n)}} \right),
    \end{align*}
\end{enumerate}
where $h_{i,j,7,n}$, $h_{i,j,8,n}$, and $h_{i,j,9,n}$ are functions from $[0,\infty)$ to $[0, B_{ij}A_{ij} ]$ and are defined on the basis of~\eqref{eq:scary}.

The above equations constitute a proper system of differential equations with the same variables $\{\E[s_i]\}_{i=1}^m$,  $\{\variance[s_i]\}_{i=1}^m$,  $\{\E[\beta_i]\}_{i=1}^m$, and  $\{\variance[\beta_i]\}_{i=1}^m$ appearing on both the sides. To express these equations compactly, we define $z^{(n)}\in [0,1]^{4m}$ as the vector whose entries are given by $z_{i,1}^{(n)}:= z_{4(i-1)+1}^{(n)} := \E[s_i^{(n)}]$, $z_{i,2}^{(n)}:= z_{4(i-1)+2}^{(n)} := \E[\beta_i^{(n)}]$, $z_{i,3}^{(n)}:= z_{4(i-1)+3}^{(n)} := \variance[s_i^{(n)}]$, and $z_{i,4}^{(n)}:= z_{4i}^{(n)} := \variance[\beta_i^{(n)}].$ Then $z^{(n)}(t)$ is a solution to the initial value problem $(z^{(n)})' =g_n(t,z^{(n)};1/n,1/\lambda^{(n)})$ and $z(0)=z_0^{(n)}$, where
\begin{enumerate} [(I)]
    \item  $g_{i,n}^{(1)} (t,z;\varepsilon_1,\varepsilon_2):=-\sum_{j=1}^m A_{ij}\Big( z_{i,1} z_{j,2} + \frac{h_{i,j,1,n}}{2}( z_{i,3} + z_{j,4}  ) \Big)+\varepsilon_2\sum_{j=1}^m h_{i,j,7, n} $,
    \item 
    $g_{i,n}^{(2)}(t,z;\varepsilon_1,\varepsilon_2):=\sum_{j=1}^m A_{ij}\left( z_{i,1} z_{j,2} + \frac{h_{i,j,1,n}}{2}( z_{i,3} + z_{j,4}  ) \right) - \gamma_i z_{i,2} -\varepsilon_2 \sum_{j=1}^m h_{i,j,7, n} $,
    \item 
    \begin{align*}
        g_{i,n}^{(3)}(t,z;\varepsilon_1,\varepsilon_2)&:=-2\sum_{j=1}^m A_{ij} ( (z_{i,1})^2 z_{j,2}  + 2h_{i,j,2,n}(z_{i,3} + z_{j,4} ) )\cr 
        &\quad+\sum_{j=1}^m A_{ij} \Big( z_{i,1}z_{j,2}+\frac{h_{i,j,1,n}}{2}(z_{i,3}+z_{j,4})  \Big)\left( 2z_{i,1} + \varepsilon_1  \right)\cr
        &\quad+\sum_{j=1}^m\left({2h_{i,j,8,n} }{\varepsilon_2} - {h_{i,j,7,n} }{\varepsilon_1\varepsilon_2} -2\E[s_1] h_{i,j,7,n}\varepsilon_2 \right),
    \end{align*}
    \item
    \begin{align*}
        g_{i,n}^{(4)}(t,z;\varepsilon_1,\varepsilon_2)&= 2\sum_{j=1}^m A_{ij}\Bigg( z_{i,1} z_{j,2}z_{i,2} +\frac{h_{i,j,1,n}}{2} (z_{i,3} +z_{j,4})z_{i,2}\cr &\quad\quad\quad\quad\quad\quad+\frac{h_{i,j,5,n}}{2}\left( h_{i,j,6,n} ( 8z_{i,3} +8z_{j,4} ) +z_{i,4}  \Bigg)                     \right)\cr
        &\quad+ \sum_{j=1}^m A_{ij}\left( \varepsilon_1 - 2z_{i,2} \right) \left( z_{i,1}z_{j,2} +\frac{ h_{i,j,1,n} }{2} (z_{i,3} +z_{j,4} )  \right)\cr
        &\quad- 2\gamma_iz_{i,4} + \varepsilon_1\gamma_iz_{i,2} - \sum_{j=1}^m \left(2 {h_{i,j,9,n} }{\varepsilon_2} + {h_{i,j,7,n} }{\varepsilon_1\varepsilon_2} \right),
    \end{align*}
    \item  $z_0^{(n)}=(s_1^{(n)}(0), \beta_{1}^{(n)}(0),0,0, s_2^{(n)}(0), \beta_{2}^{(n)}(0),0,0, \ldots,s_{m}^{(n)}(0), \beta_{m}^{(n)}(0),0,0)$.
\end{enumerate}

Observe that irrespective of the value of $n$, the solution $(\bar z_{i,1}(t) , \bar z_{i,2}(t), \bar z_{i,3} (t), \bar z_{i,4} (t)):=(y_i(t), w_i(t), 0, 0)$ solves the initial value problem $z' = g_n(t,z;0,0)$ and $z(0)=z_0$, where
$$
    z_0:= (s_{1,0}, \beta_{1,0}, 0, 0,s_{2,0}, \beta_{2,0}, 0, 0,\ldots, s_{m,0}, \beta_{m,0}, 0, 0 ).
$$

Next, we need to bound $\left\|z^{(n)}(t)-\bar{z}(t)\right\|$  (where $\bar z(t)\in [0,1]^{4m}$ is the unique vector satisfying $\bar z_{4(i-1)+\ell}(t)=\bar z_{i,\ell}(t)$ for all $i\in [m]$ and $\ell\in[4]$). For this purpose, we will need the following lemma, which we borrow from~\cite{armbruster2017elementary}.

\begin{lemma}\label{lem:final}
Consider the initial value problems $x^{\prime}=f_{1}(t, x)$, $x(0)=x_{1}$ and $x^{\prime}=f_{2}(t, x)$, $x(0)=x_{2}$
with solutions $\varphi_{1}(t)$ and $\varphi_{2}(t)$ respectively. If $f_{1}$ is Lipschitz in $x$ with constant $L$ and
$\|f_{1}(t, x)-f_{2}(t, x)\| \leq M,$ then $\left\|\varphi_{1}(t)-\varphi_{2}(t)\right\| \leq\left(\left\|x_{1}-x_{2}\right\|+M / L\right) e^{L t}-M / L$.
\end{lemma}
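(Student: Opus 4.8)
The plan is to reduce both initial value problems to their equivalent integral forms and then finish with Gr\"onwall's inequality. First I would write, for $i=1,2$,
\[
\varphi_i(t) = x_i + \int_0^t f_i\big(s,\varphi_i(s)\big)\,ds,
\]
which is legitimate since each $\varphi_i$ solves the corresponding ODE (so in particular it is absolutely continuous). Subtracting the two identities and applying the triangle inequality gives
\[
\|\varphi_1(t)-\varphi_2(t)\| \;\le\; \|x_1-x_2\| + \int_0^t \big\|f_1(s,\varphi_1(s)) - f_2(s,\varphi_2(s))\big\|\,ds.
\]

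Next I would insert the intermediate term $f_1(s,\varphi_2(s))$ into the integrand and split again by the triangle inequality:
\[
\big\|f_1(s,\varphi_1(s)) - f_2(s,\varphi_2(s))\big\| \le \big\|f_1(s,\varphi_1(s)) - f_1(s,\varphi_2(s))\big\| + \big\|f_1(s,\varphi_2(s)) - f_2(s,\varphi_2(s))\big\|.
\]
The first term on the right is at most $L\|\varphi_1(s)-\varphi_2(s)\|$ by the hypothesis that $f_1$ is Lipschitz in $x$ with constant $L$, and the second is at most $M$ by the uniform-closeness hypothesis. Writing $u(t):=\|\varphi_1(t)-\varphi_2(t)\|$ (a continuous, nonnegative function), this yields the scalar integral inequality
\[
u(t) \le \|x_1-x_2\| + \int_0^t \big(L\,u(s) + M\big)\,ds.
\]

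To close the argument I would homogenize the inequality by setting $w(t) := u(t) + M/L$. Since $\int_0^t\big(Lu(s)+M\big)\,ds = L\int_0^t w(s)\,ds$, the bound becomes $w(t) \le \big(\|x_1-x_2\| + M/L\big) + L\int_0^t w(s)\,ds$, and the integral form of Gr\"onwall's inequality gives $w(t) \le \big(\|x_1-x_2\| + M/L\big)e^{Lt}$, i.e.\ $u(t) \le \big(\|x_1-x_2\| + M/L\big)e^{Lt} - M/L$, which is the claimed estimate. This proof is essentially routine: the only point requiring a little care is the invocation of Gr\"onwall's inequality (which itself follows by differentiating $t\mapsto e^{-Lt}\int_0^t w(s)\,ds$, or by Picard iteration) together with the continuity of $u$ that makes the integral inequality well posed. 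There is no substantive obstacle here, which is precisely why the paper borrows this lemma verbatim from~\cite{armbruster2017elementary} and then applies it with $f_1 = g_n(\cdot,\cdot;0,0)$, $f_2 = g_n(\cdot,\cdot;1/n,1/\lambda^{(n)})$, $x_1 = z_0$, $x_2 = z_0^{(n)}$ to bound $\|z^{(n)}(t)-\bar z(t)\|$ and complete the proof of Theorem~\ref{thm:main}.
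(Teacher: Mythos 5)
Your proof is correct and complete: the reduction to the integral form, the insertion of the intermediate term $f_1(s,\varphi_2(s))$, the resulting inequality $u(t)\le\|x_1-x_2\|+\int_0^t\bigl(Lu(s)+M\bigr)\,ds$, and the homogenization $w=u+M/L$ followed by Gr\"onwall all go through exactly as you describe, yielding the stated bound. The paper itself offers no proof of this lemma (it is imported verbatim from the cited reference), so there is nothing to diverge from; your argument is the standard one and fills that gap correctly.
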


Now, note that the domain of $z$ for $g_n(t,z;\varepsilon_1,\varepsilon_2)$ can be chosen to be bounded because $0\leq \E[s_i], \E[\beta_i] \leq 1$ and $\variance[s_i] \leq \E[s_i^2] \leq 1$. Similarly, $\variance[\beta_i]\le1$. Also, we let $\varepsilon_1,\varepsilon_2\in(0,1)$ and define $\varepsilon:=\max\{\varepsilon_1,\varepsilon_2\}$. Since $g_n(t,z;0,0)$ is a polynomial in $z$, it is Lipschitz-continuous with some Lipschitz constant $L\in(0,\infty)$. In addition, we use the bounds on $z$ and the functions $\{h_{i,j,\ell,n}:1\leq \ell\leq 9\}$ as follows:
\begin{align*}
    \|g_n(t,z;\varepsilon_1,\varepsilon_2) - g_n(t,z;0,0)\| 
    &\leq 2\sum_{i=1}^m \sum_{j=1}^m A_{ij}\varepsilon \left | z_{i,1}z_{j,2} +\frac{ h_{i,j,1,n} }{2} (z_{i,3} +z_{j,4} )  \right |+\sum_{i=1}^m\gamma_i\varepsilon + 10\sum_{i=1}^m\sum_{j=1}^m  A_{ij}B_{ij}\varepsilon\cr
    &\leq \left(\sum_{i=1}^m \sum_{j=1}^m A_{ij}(4 + 10B_{ij}) + \sum_{i=1}^m \gamma_i\right)\varepsilon,
\end{align*}
i.e., 
\begin{align*}
    &\|g_n(t,z;\varepsilon_1,\varepsilon_2) - g_n(t,z;0,0)\| \leq M(\varepsilon),
\end{align*}
where $M(\varepsilon):=\left(\sum_{i=1}^m \sum_{j=1}^m A_{ij}(4 + 10B_{ij}) + \sum_{i=1}^m \gamma_i\right)\varepsilon$.

We now apply Lemma~\ref{lem:final} after setting
\begin{gather*}
    f_1(t,x)=g_n(t,x;0,0), \quad f_2(t,x)=g_n(t,x;1/n,1/\lambda^{(n)}),\quad x_1 = z_0,\quad x_2 = z_0^{(n)}.
\end{gather*}
Also, we let $\varphi_1 = \bar z$ and $\varphi_2 = z^{(n)}$. Then we have
$$
\left\|z^{(n)}(t)-\bar{z}(t)\right\| \leq\left(\left\|z_{0}-z_{0}^{(n)}\right\|+\frac{M\alpha_n }{L}\right) e^{L t}-\frac{M\alpha_n}{L},
$$
where $\alpha_n:=\max\left\{\frac{1}{n},\frac{1}{\lambda^{(n)}}\right\}$.
Thus, for all $t \leq T$,
\begin{align}\label{eq:copied}
    \left\|z^{(n)}(t)-\bar{z}(t)\right\| \leq\left(\left\|z_{0}-z_{0}^{(n)}\right\|+\frac{M\alpha_n}{L}\right) e^{L T}-\frac{M\alpha_n}{L}.
\end{align}
Since $\lim_{\varepsilon\rightarrow0}M(\varepsilon)=0$, $\lim_{n\rightarrow\infty} z_{0}^{(n)} = z_{0}$, and $\lim_{n\rightarrow\infty}\alpha_n=\lim_{n\rightarrow\infty}\max\left\{\frac{1}{n},\frac{1}{\lambda^{(n)}}\right\}=0$, the right hand side of~\eqref{eq:copied}  goes to zero as $n \rightarrow \infty .$ Hence we have the uniform convergence $z^{(n)} \rightarrow \bar{z}$ over any finite time interval $[0, T]$. The last step is to show that $z^{(n)}\rightarrow\bar z$ implies $L^2$-convergence, i.e., $\E[\|(s_i^{(n)} - y_i, \beta_i^{(n)} - w_i)\|_2] \rightarrow 0$ as $n\rightarrow\infty$. To this end, we have
\begin{align*}
    \E[\|(s_i^{(n)} - y_i, \beta_i^{(n)} - w_i)\|_2^2] &= \E[(s_i^{(n)} - y_i)^2] + \E[(\beta_i^{(n)} - w_i)^2]\cr
    & = (\E[s_i^{(n)}]-y_i])^2 + (\E[\beta_i^{(n)}] - w_i)^2 + \variance[s_i^{(n)}] + \variance[\beta_i^{(n)}]\cr
    &\leq |\E[s_i^{(n)}]-y_i]| + |\E[\beta_i^{(n)}]-w_i]| +  \variance[s_i^{(n)}] + \variance[\beta_i^{(n)}]\cr
    &= |z_{i,1}^{(n)} - \bar z_{i,1}| + |z_{i,2}^{(n)} - \bar z_{i,2}| + |z_{i,3}^{(n)} - \bar z_{i,3}| + |z_{i,4}^{(n)} - \bar z_{i,4}|,
\end{align*}
where we used that $\bar z_{i,3} = \bar z_{i,4} = 0$, and the inequality holds because $y_i, \E[ s_i^{(n)}], w_i, \E[\beta_i^{(n)} ] \in [0,1]$. Thus, the uniform convergence of $z^{(n)}$ to $ \bar{z}$ over $[0,T]$ proves that $\E[\|(s_i^{(n)} - y_i, \beta_i^{(n)} - w_i)\|_2] \rightarrow 0$ as $n\rightarrow\infty$.

\end{proof}

\end{document}
